\newcounter{mycount}
\numberwithin{mycount}{section}
\newtheorem{thm}[mycount]{Theorem}
\newtheorem{cor}[mycount]{Corollary}
\newtheorem{lem}[mycount]{Lemma}
\newtheorem{prop}[mycount]{Proposition}
\theoremstyle{definition}
\newtheorem{dfn}[mycount]{Definition}
\theoremstyle{remark}
\newtheorem{rmk}[mycount]{Remark}
\newcommand{\ds}{\displaystyle}
\renewcommand{\O}{\mathcal O}
\newcommand{\D}{\mathcal D}
\newcommand{\E}{\mathcal E}
\newcommand{\F}{\mathcal F}
\newcommand{\G}{\mathcal G}
\newcommand{\I}{\mathcal I}
\newcommand{\J}{\mathcal J}
\renewcommand{\L}{\mathcal L}
\newcommand{\M}{\mathcal M}
\newcommand{\N}{\mathcal N}
\renewcommand{\P}{\mathcal P}
\newcommand{\W}{\mathcal W}
\newcommand{\Tot}{\mathrm{Tot}}
\renewcommand{\Bar}{\widehat{\mathcal Bar}}
\newcommand{\C}{\mathfrak C}
\newcommand{\Perf}{\mathrm{Perf}}
\newcommand{\Hom}{\mathrm{Hom}}
\newcommand{\sHom}{\mathcal{H}om}
\newcommand{\tens}[1]{\underset{#1}{\otimes}}
\newcommand{\ltens}[1]{\overset{\mathbb{L}}{%
\underset{#1}{\otimes}%
}%
}
\newcommand{\At}{\mathcal At}
\newcommand{\MF}{\mathfrak{MF}}
\newcommand{\Qcoh}{\mathfrak{Qcoh}}
\newcommand{\Coh}{\mathfrak{Coh}}
\newcommand{\RHom}{\mathbb{R}\mathrm{Hom}}
\DeclareMathOperator*{\hocolim}{hocolim}
\author{David Platt}
\date{\today}
\title{Chern Character For Global Matrix Factorizations}
\begin{document}
\begin{abstract}
We give a formula for the Chern character on the DG-category of global matrix factorizations on a smooth scheme $X$ with superpotential $w\in \Gamma(\O_X)$. Our formula takes values in a Cech model for Hochschild homology. Our methods may also be adapted to get an explicit formula for the Chern character for perfect complexes of sheaves on $X$ taking values in right derived global sections of the De-Rham algebra. Along the way we prove that the DG version of the Chern Character coincides with the classical one for perfect complexes. 
\end{abstract}

\maketitle

\tableofcontents

\section{Introduction} \label{section:intro}

Shklyarov in \cite{Shkly} gives a beautiful interpretation of the Chern Character and Riemann-Roch theorem in the context of DG-categories over a field $k$. In his treatment, he  uses functoriality of Hochschild homology and the canonical functor $k_E:k\to \mathfrak C$, which simply sends the DG-algebra $k$ to the object $E\in \mathfrak C$, to get the Chern character of $E$,
\[ch(E)=HH(k_E):k=\mathrm{HH}(k)\to \mathrm{HH}(\mathfrak C).\] 
In the case when $\mathfrak C$ is a proper DG-category, i.e. the diagonal bimodule, $\Delta$, takes values in perfect $k$-modules ($\Perf~k$) , we use the Kunneth isomorphism and the isomorphism $\mathrm{HH(\mathfrak{C}^{op})}\cong \mathrm{HH(\mathfrak C)}$ to obtain a pairing on homology:
\[<-,->_{\mathfrak C}:\mathrm{HH}(\mathfrak C)\otimes \mathrm{HH}(\mathfrak C)\cong \mathrm{HH}(\mathfrak C\otimes \mathfrak C^{op})\stackrel{\mathrm{HH(\Delta)}}{\to} \mathrm{HH}(\Perf k)=k\]
With this pairing and definition of the Chern character, the Riemann-Roch theorem,
\[<ch(E),ch(F)>_{\mathfrak C}=\mathrm{str}~\Hom_{\mathfrak C}(E,F),\] then becomes almost tautological, following easily from functoriality.

As with all beautiful things, the hard part is in the application. That is, for a particular DG-category, $\mathfrak C$, the difficulty is to get a meaningful handle on the Chern character and the pairing on Hochschild homology. The DG-categories of interest to us presently are certain categories of (global) matrix factorizations. We also only focus on the first half of the problem, i.e. to compute the Chern Character, taking values in some reasonable model for Hochschild homology.  
We, in fact, concern ourselves with a mildly more general problem: to compute the so called {\em boundary bulk map}. This is a map from the endomorphism DG-algebra of an object to Hochschild homology. We recommend \cite{PV:HRR} for details, in particular for the proof of the fact that the Chern Character is simply the boundary bulk map evaluated at the identity. Our formula for this map is rather involved, too much so to reproduce here (the impatient reader may thumb to theorem \ref{thm:formula}), however in the case when our matrix factorization, $\E$, admits a global connection, $\nabla$, i.e. global connections on graded components $\nabla_i:\E_i\to \Omega\otimes \E_i$, $i=0,1$, we obtain the following formula for the Chern Character:
\[ch(\E)=str\left(\sum_{i=0}^{\mathrm{dim~} X} \frac{[\nabla,e]^{i}}{i!}\right)\]
where $str$ denotes the super-trace, $e$ is the curved differential on $\E$, and 
\[[\nabla,e]=\nabla_{i+1} e_i-1\otimes e_i \nabla_{i}\qquad i=0,1.\]
We save understanding the pairing for a later work.

This paper is organized as follows. Section \ref{section:MF} contains the background information on our particular version of matrix factorizations (taken from \cite{PV:Stacks}). Section \ref{section:coderived} contains the DG/triangulated category theory pertaining to matrix factorizations that we will need. Some of results therein have not appeared in the generality in which we state them, but by no means is anything new. In section \ref{section:HH}, we carry out the computation of Hochschild Homology for our categories of matrix factorizations. The method for this computation is suggested in \cite{PL} and the analogous computation is carried out for Hochschild Cohomology therein. We give the details for homology. This result is also know by other methods from \cite{Preygel}. 

Sections \ref{section:atiyah} and \ref{section:formula} form the heart of the paper, culminating in the a formula for the boundary-bulk map which takes values in a Cech model for Hochschild Homology of matrix factorizations. This formula makes use of a choice of local connections on a Cech cover of the scheme $X$. In our opinion, more interesting than the formula, is the observation that the boundary-bulk map, which is a map in the derived category of complexes of vector spaces, may be promoted to the derived category of sheaves on our space $X$. Section \ref{section:atiyah} is concerned with understanding this promotion. Section \ref{section:formula} is concerned with what then happens upon applying right-derived global sections.

Throughout we assume the reader is mildly familiar with DG-categories and recommend \cite{Toen:Topics} for those who are not. Our specific conventions are as follows. We fix once and for all a field $k$. As one particular foundational lemma (\ref{lem:exteriorProduct}) will require it, we assume that $k$ is perfect. All categories we work with will be $k$-linear. In particular, ``scheme" will mean $k$-scheme so that quasi-coherent sheaves on said scheme form a $k$-linear category. $C(k)$ will denote the category of chain complexes of $k$ vector-spaces. By {\em modules} and $\mathfrak C-Mod$ we will always mean right modules, i.e. contravariant DG functors from $\mathfrak C$ with values in $C(k)$.  All grading will be $\mathbb Z$ gradings, though often there will be a 2-periodicity among graded components. We will use cohomological grading conventions. Subscripts will not denote a change from this convention, but instead will be used to reference internal grading for objects. For example if $C$ is a chain complex we will write $C_i$ for the $i$-th graded component (the differential has $d:C_{i}\to C_{i+1}$), whereas we would write
$\dots \to C^{-1}\to C^0\to C^1\to \dots$ 
for a complex of complexes with each $C^j$ a complex in its own right.  We will use $~^\#$ to denote the ``underlying graded object" functor, which forgets any additional structure (e.g. the differential) except the $\mathbb Z$-grading. For example if 
\[C=\dots\to C_{-1}\to C_0\to C_{1}\to \dots\]
is a chain complex then $C^{\#}$ is the graded vector space
\[C^{\#}=\bigoplus C_i.\]
For any DG category $\mathfrak C$, we may form two regular categories $Z^0\mathfrak C$ and $H^0\mathfrak C$, with the same objects as $\mathfrak C$ but with homs given by
\[\Hom_{Z^0\mathfrak C}(a,b)=Z^0\Hom_{\mathfrak C}(a,b)\quad \Hom_{H^0\mathfrak C}(a,b)=H^0\Hom_{\mathfrak C}(a,b)\]
where on the right-hand sides $Z^0$ and $H^0$ are the usual zero cycle and zero cohomology functors for chain complexes. We may also form the derived category $D\mathfrak C$, which is the Verdier localization of $H^0(\mathfrak C-Mod)$ with respect to quasi-isomorphisms.

\section{Matrix Factorizations}\label{section:MF}
We work with categories of matrix factorizations as in \cite{PV:Stacks} and \cite{Pos:CohMF}. Specifically we let $X$ be a noetherian $k$-scheme, $\L$ a line bundle on $X$ and $w\in \L(X)$ a global section. A {\em matrix factorization}, denoted 
\begin{center}
\begin{tikzpicture} 
\matrix(m)[matrix of math nodes, column sep=2em]{\E=\E_0 &\E_1\\};
\path[->]
(m-1-1.10) edge[bend left=15] node[auto] {$e_0$} (m-1-2)
(m-1-2) edge[bend right=-15] node[auto] {$e_1$} (m-1-1.base east);
\end{tikzpicture}
\end{center}
on $X$ with {\em potential} $w\in \L(X)$
consists of the data of two vector-bundles $\E_0$ and $\E_1$ on $X$ and maps 
\[e_1:\E_1\to \E_0 \quad \mbox{and} \quad  e_0:\E_0\to \E_1\otimes \L\]
such that $e_0e_1=id_{\E_1}\otimes w$ and $(e_1\otimes id_{\L})e_0=id_{\E_0}\otimes w$. Twisting by $\L$ and expanding 2-periodically we may view a matrix factorization as a ``complex" of sheaves except the differential, $e$, has $e^2$ is multiplication by $w$:
\begin{equation}\label{eqn:matfact}
\dots\to\E_{1}\otimes \L^{-1}\to \E_{0}\otimes \L^{-1}\to \E_1\to \E_0\to \E_1\otimes \L \to \E_0\otimes \L\to \dots.
\end{equation}
Here the term $\E_{i}\otimes \L^k$ lives in degree $2k-i$.
Given two matrix factorizations on $X$ with potential $w$, 
\begin{center}
\begin{tikzpicture} 
\matrix(m)[matrix of math nodes, column sep=2em]{\E=\E_0 &\E_1,\\};
\path[->]
(m-1-1.10) edge[bend left=15] node[auto] {$e_0$} (m-1-2)
(m-1-2) edge[bend right=-15] node[auto] {$e_1$} (m-1-1.base east);
\end{tikzpicture}
\begin{tikzpicture} 
\matrix(m)[matrix of math nodes, column sep=2em]{\D=\D_0 &\D_1\\};
\path[->]
(m-1-1.10) edge[bend left=15] node[auto] {$d_0$} (m-1-2)
(m-1-2) edge[bend right=-15] node[auto] {$d_1$} (m-1-1.base east);
\end{tikzpicture}
\end{center}
we may define a complex of morphisms $\Hom(\E,\D)$ whose underlying graded components are 
\[\Hom^{2k}(\E,\F):=\Hom_{\O_X}(\E_0,\F_0\otimes \L^{k})\oplus \Hom_{\O_X}(\E_1,\F_1\otimes \L^{k})\]
and
\[\Hom^{2k+1}(\E,\F):=\Hom_{\O_X}(\E_1,\F_0\otimes \L^{k})\oplus \Hom_{\O_X}(\E_0,\F_1\otimes \L^{k+1}).\]
The differential on $\Hom(\E,\D)$ is given by $\partial(f)=df-(-1)^{|f|}fe$. One easily verifies that $\partial^2=0$ so $\Hom(\E,\D)$ is indeed an honest complex, even though $\E$ and $\D$ are not.

The DG-category of matrix factorizations defined above is not ``correct" in the global setting. It contains objects which ``should be" 0 in the DG-derived category but are not, i.e. there are locally contractible matrix factorizations which are not globally contractible. There are several ways of dealing with this. In \cite{Orl:MF} Orlov defines the derived category of matrix factorizations to be the Verdier quotient of the derived category of matrix factorizations in standard DG sense by the thick subcategory of locally contractible objects. Alternatively one can form a new DG-category $\MF_{loc}(X,\L,w)$, in which we localize with respect to the spacial variable. The objects of $\MF_{loc}(X,\L,w)$ are the same as in $\MF(X,\L,w)$ and morphisms are given by a suitably functorial models (so that composition is well-defined) for the complexes $\RHom(\E_i,\F_j\otimes \L^{n})$ for $i,j=0,1$ and all $n$,
 then defining the morphism complex in the ``corrected" category to be
\[\Hom_{\MF_{loc}(X,\L,w)}(\E,\F):=\Tot(\RHom(\E,\F))\]
This can be done using a Cech model as in \cite{PL} and \cite{Shipman} or by choosing functorial injective resolutions which we explain below. 

In what follows we will want to consider a slightly larger class of objects obtained by dropping the restriction that $\E_0$ and $\E_1$ be vector bundles and allowing the graded components of $\E$ to arbitrary quasi-coherent sheaves on $X$. We will refer to such an object as a {\em curved quasi-coherent $\O_X$ module}. The DG-category of curved modules with Hom complexes defined above will be denoted by $\Qcoh(X,\L,w)$, the full subcategory of matrix factorizations will be denoted by $\MF(X,\L,w)$ and the full subcategory formed by considering curved sheaves with coherent graded components will be $\Coh(X,\L,w)$. We will often drop $X$ and or $\L$ from the notation, when they are clear from context. We will keep $w$, to distinguish $\Coh(w)$ (resp. $\Qcoh(w)$) from the categories $\mathbf{Coh}(X)$ (resp. $\mathbf{Qcoh}(X)$) of ordinary (quasi-)coherent sheaves on $X$. 

In \cite{Pos:TwoKinds} Positselski defines the the notion of a {\em curved differential graded ring} (CDG-ring) as a Graded ring $B=\bigoplus B_i$ along with a degree 1 endomorphism $d$ and an element $w\in B_2$ such that $\delta^2=[w,-]$. A $B$-module is a graded (left) $B^\#$-module $M$ 
endowed with its own differential $d_M$ satisfying the compatibility identity
\[d_M(am)=d(a)m-(-1)^{|a|}ad_M(m).\] 
Morphisms between curved modules are $B^{\#}$ module morphisms and are endowed with a differential in the standard way. As with matrix factorizations this differential produces a complex.

As in \cite{Pos:CohMF} we may use a sheafified version of curved modules to describe the category $\Qcoh(w)$. We define a sheaf of CDG-rings $S(\L)=\bigoplus_{i\in \mathbb Z} \L^i$ as the ``free algebra" on $\L$, graded such that $\L$ lives in degree 2 and we endow $S(\L)$ with the trivial differential. Then a curved $\O_X$ module with potential $w\in \L(X)$ is a Quasi-coherent CDG $S(\L)$-module, i.e. a curved quasi-coherent module as defined above gives rise to a $\mathbb Z$-graded $S(\L)$-module
\[\dots\stackrel{e_1}{\to} \E_{0}\otimes \L^{-1} \stackrel{e_0}{\to}\E_1\to \E_0\stackrel{e_1}{\to} \E_1\otimes \L^1\stackrel{e_0}{\to} \E_0\otimes \L^2\stackrel{e_1}{\to}\dots\]
with differential $e$ such that $e^2$ is multiplication by $w$. One can check that the morphisms in $\Qcoh(w)$ are precisely the morphisms of CDG $S(\L)$-modules, i.e. graded-morphisms on the underlying $\O_X$ modules which commute with the $S(\L)$ action. 

Conversely given a curved $S(\L)$-module $(\M,d_\M)$, the natural isomorphisms 
\[\L^{-1}\otimes \L\cong \O_X\cong \L\otimes \L^{-1}\] and the associativity of multiplication imply that any $S(\L)$-module, $\M$, must have isomorpishms 
\[\M_i\otimes \L\cong \M_{i+2}\] for all $i$. This gives an equivalence of categories between the $\Qcoh(w)$ and the category of CDG curved $S(\L)$ modules with curvature $w$. We will use both interpretations of $\Qcoh(X,\L,w)$ interchangeably. We will continue to use the notation $\Qcoh(X,\L,w)$ for both.

For curved modules $\M\in \Coh(X,\L,w)$ and $\N\in \Qcoh(X,\L,w')$ we may form the curved module 
$\sHom_{S(\L)}(\M,\N) \in \Qcoh(X,\L,w'-w)$ defined by 
\[\sHom_{S(\L)}(\M,\N)^{\#}=\sHom_{S(\L)^{\#}}(\M^{\#},{\N}^{\#})\] and whose differential is given by $\partial(f)=d_{M'}f-(-1)^{|f|}fd_{\N}$. In particular for $\M\in \Coh(X,\L,w)$ we have the dual module \[\M^{\vee}=\sHom_{S(\L)}(\M,S(\L))\in \Coh(X,\L,-w).\] For $\M\in \Qcoh(X,\L,w)$ and $\N\in \Qcoh(X,\L,w')$ we may form the tensor product $\M \tens{S(\L)} \N\in \Qcoh(X,\L,w+w')$ defined by 
\[(\M\tens{S(\L)} \N)^\#=\M^\#\tens{S(\L)^\#}\N^\#\]
and whose differential is given by $d_{\M\otimes \N}=d_\M\otimes 1+1\otimes d_{\N}$. More explicitly for 
\begin{center}
\begin{tikzpicture} 
\matrix(m)[matrix of math nodes, column sep=2em]{\M=\M_0 &\M_1,\\};
\path[->]
(m-1-1.10) edge[bend left=15] node[right=.3pt, above=.3pt] {$m_0$} (m-1-2)
(m-1-2) edge[bend right=-15] node[right=.3pt, below=.3pt] {$m_1$} (m-1-1.base east);
\end{tikzpicture}
\begin{tikzpicture} 
\matrix(m)[matrix of math nodes, column sep=2em]{\N=\N_0 &\N_1\\};
\path[->]
(m-1-1.10) edge[bend left=15] node[auto] {$n_0$} (m-1-2)
(m-1-2) edge[bend right=-15] node[auto] {$n_1$} (m-1-1.base east);
\end{tikzpicture}

\end{center} we have\\

$\sHom_{S(\L)}(\M,\N)=$

\begin{center}
\begin{tikzpicture} 
\matrix(m)[matrix of math nodes, column sep=4em]{|[left]|\sHom(\M_0,\N_0)\oplus \sHom(\M_1,\N_1)&|[right]| \sHom(\M_0,\N_1)\oplus\sHom(\M_1,\N_0)\\};
\path[->]
(m-1-1.north east) edge[bend left=15] node[auto] {$\left(\begin{smallmatrix} {-n_0}_{\ast}&{m_0}^\ast\\
{m_1}^\ast&{-n_1}_\ast\end{smallmatrix}\right)$} (m-1-2.north west)
(m-1-2.south west) edge[bend right=-15] node[auto] {$\left(\begin{smallmatrix} {n_1}_{\ast}&{m_1}^\ast\\
{m_0}^\ast&{n_0}_\ast\end{smallmatrix}\right) $} (m-1-1.south east);
\end{tikzpicture}
\end{center} 

and \\

$\M\tens{S(\L)} \N=$

\begin{center}
\begin{tikzpicture} 
\matrix(m)[matrix of math nodes, column sep=4em]{\M_0\otimes\N_0\oplus \M_1\otimes\N_1 & \M_0\otimes\N_1\oplus\M_1\otimes\N_0\\};
\path[->]
(m-1-1.north east) edge[bend left=15] node[auto] {$\left(\begin{smallmatrix} n_0\otimes 1&1\otimes m_0\\
1\otimes m_1&n_1\otimes 1\end{smallmatrix}\right)$} (m-1-2.north west)
(m-1-2.south west) edge[bend right=-15] node[auto] {$\left(\begin{smallmatrix} n_1\otimes 1&1\otimes m_1\\
1\otimes m_0&n_0\otimes 1\end{smallmatrix}\right) $} (m-1-1.south east);
\end{tikzpicture}
\end{center}
The following facts about the $\sHom_{S(\L)}$ and $\tens{S(\L)}$ functors are easily verified by sheafifying the natural isomorphisms that arise when $X$ is affine. 
\begin{prop} \label{prop:functors}Let $\M\in \Coh(X,\L,w)$ $\N\in \Coh(X,\L,v)$, $\P\in \Qcoh(X,\L,u)$ and $\E\in \MF(X,\L,t)$ and $\D\in \MF(X,\L,s)$ then
\begin{enumerate}
\item $\sHom_{S(\L)}(\M\tens{S(\L)} \N,\P)\cong \sHom_{S(\L)}(\M,\sHom_{S(\L)}(\N,\P))$ naturally as objects of $Z^0\Qcoh(X,\L,u-v-w)$.\\

\item $\sHom_{S(\L)}(\M\tens{S(\L)} \E,\N)\cong \sHom_{S(\L)}(\M, \E^\vee\tens{S(\L)}\N)\cong \sHom_{\S(L)}(\M,\N)\tens{S(\L)} \E^{\vee}$ naturally as objects of $Z^0\Qcoh(X,\L,v-w-t)$.\\

\item $\E^{\vee}\tens{S(\L)} \D \cong \sHom_{S(\L)}(\E,\D)$ as objects of $Z^0\Qcoh(X,\L,s-t)$.\\ 

\item $(\E^{\vee})^{\vee}\cong \E$ naturally in $Z^0\MF(X,\L,t)$ and the functor \[\vee: \MF(X,\L,t)^{op}\to \MF(X,\L,-t)\] is an equivalence of DG-categories. \\

\end{enumerate}
\end{prop}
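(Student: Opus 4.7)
The plan is to reduce each identity to its counterpart for modules over the graded sheaf of rings $S(\L)^\#$ by forgetting the curved differentials, and then to verify that the standard natural maps commute with the differentials supplied by the CDG-module structures. Since all functors involved are defined pointwise and compatible with restriction, it suffices to check each statement on an affine open cover $\{U_\alpha\}$ of $X$, where everything reduces to the analogous identity for ordinary graded $S(\L)(U_\alpha)^\#$-modules: the tensor-hom adjunction for (1); the duality isomorphism $\E^{\#\vee} \tens{S(\L)^\#} \D^\# \cong \sHom_{S(\L)^\#}(\E^\#, \D^\#)$ for (3), which holds because the graded components $\E_0, \E_1$ are locally free of finite rank; and biduality of finite-rank projective modules for (4). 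Statement (2) is a formal consequence of (1) and (3), combined with the graded symmetry of $\tens{S(\L)^\#}$ used to move $\E^\vee$ between slots (introducing the usual Koszul signs).

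With the underlying graded isomorphisms in hand, the promotion to $Z^0\Qcoh(X,\L,\cdot)$ is a mechanical verification. In each case the canonical map (currying, evaluation $\phi \otimes d \mapsto \phi(-) \cdot d$, or biduality) is a map of graded sheaves, and we must check it is a cycle of degree zero with respect to $\partial(f) = d \circ f - (-1)^{|f|} f \circ d$. The computation proceeds identically to its uncurved counterpart because the differentials on $\sHom_{S(\L)}$ and $\tens{S(\L)}$ are built from the same Leibniz-type rules, and the potentials of $\M, \N, \P, \E, \D$ are arranged so that the source and target of each isomorphism carry the same total potential. The curvature identity $\partial^2 = [w,-]$ therefore contributes equally on both sides and imposes no obstruction.

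Finally, for the equivalence in (4), the biduality isomorphism $(\E^\vee)^\vee \cong \E$ supplies a quasi-inverse to $\vee$, and fully faithfulness on morphism complexes follows from (3) applied twice together with the symmetry of $\tens{S(\L)}$: for $\E, \D \in \MF(X,\L,t)$ one identifies $\Hom(\D^\vee, \E^\vee) \cong \D \tens{S(\L)} \E^\vee \cong \E^\vee \tens{S(\L)} \D \cong \sHom_{S(\L)}(\E, \D) = \Hom(\E, \D)$. The main piece of care I expect to require attention is the bookkeeping of Koszul signs in (2) and in the symmetry step just used; everything else is routine, precisely because the balanced curvatures ensure that the purely algebraic content of each identity descends from the graded level without modification.
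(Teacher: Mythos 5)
Your approach is essentially the same as the paper's, which simply says these isomorphisms are ``easily verified by sheafifying the natural isomorphisms that arise when $X$ is affine''; you have filled in the shape of what that verification would look like (tensor-hom adjunction, the locally-free duality isomorphism, biduality, and the check that the canonical maps commute with the curved differentials). Your proposal is correct, and the only small point you glossed over is that for the equivalence claim in (4) one should verify not merely that the Hom complexes $\Hom(\D^\vee,\E^\vee)$ and $\Hom(\E,\D)$ are isomorphic but that the map induced by $\vee$ itself is that isomorphism and respects composition; this is equally routine and the paper does not spell it out either.
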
 

The category $Z^0\Qcoh(w)$ 
is easily seen to be an abelian category with arbitrary direct sums. From \cite{Pos:CohMF} the homotopy category $H^0\Qcoh(w)$ admits a triangulated structure with the obvious shift functor and in which distinguished triangles are isomorphic to triangles of the form
\[\E\stackrel{f}{\to} \mathcal D\to Cone(f)\to \E[1]\]
where $Cone(f)$ is defined analogously to cones in the category of complexes of sheaves. Also, as we will will use it frequently, if 
\[\dots \to \M^{-1}\to \M^0\to \M^1\to \dots\]
is a complex of curved modules (where the curved modules are viewed in the abelian category $Z^0\Qcoh(w)$ we may form the direct sum total curved module $\Tot(\M^{\bullet})$ whose graded components are $\Tot(\M^{\bullet})^{n}=\bigoplus_{p+q=n} \M^{p}_q$ and whose curved differential is given by the formula analogous to forming the total complex for complexes of sheaves. 

The functor $\#:Z^0\Qcoh(w)\to S(\L)^\#-Mod_0$, where $S(\L)^{\#}-Mod_0$ denotes the category of $S(\L)^{\#}$ modules with degree 0 morphisms, admits left and right adjoints $+$ and $-$ defined by
\begin{center}
\begin{tikzpicture} 
\matrix(m)[matrix of math nodes, column sep=4em]{\M^+=\M_0\oplus\M_1& \M_1\otimes \L^{-1}\oplus\M_0\\};
\path[->]
(m-1-1.north east) edge[bend left=15] node[auto] {$\left(\begin{smallmatrix} 0&1\\
w&0\end{smallmatrix}\right)$} (m-1-2.north west)
(m-1-2.south west) edge[bend right=-15] node[auto] {$\left(\begin{smallmatrix} 0&w\\
1&0\end{smallmatrix}\right) $} (m-1-1.south east);
\end{tikzpicture}
\end{center}
and 
\begin{center}
\begin{tikzpicture} 
\matrix(m)[matrix of math nodes, column sep=4em]{\M^-=\M_0\oplus\M_1\otimes \L& \M_1\oplus\M_0\\};
\path[->]
(m-1-1.north east) edge[bend left=15] node[auto] {$\left(\begin{smallmatrix} 0&w\\
1&0\end{smallmatrix}\right)$} (m-1-2.north west)
(m-1-2.south west) edge[bend right=-15] node[auto] {$\left(\begin{smallmatrix} 0&1\\
w&0\end{smallmatrix}\right) $} (m-1-1.south east);
\end{tikzpicture}
\end{center}
Evidently the functors $~^+$ and $~^-$ are exact.

We may use these adjoints to construct right and left resolutions in the abelian $Z^0\Qcoh(w)$, by first resolving as graded sheaves of $S(\L)^{\#}$-modules and then applying either $+$ or $-$ appropriately. Specifically when 
\[(\F^\bullet)^{\#}\to \E^\#\]
is a resolution of $\E$ as a graded $S(\L)$ module then 
\[((\F^\bullet)^{\#})^+\to \E\]
resolves $\E$ as a $w$-curved $S(\L)$ modules and similarly when 
\[\E^{\#}\to (\I^\bullet)^{\#}\]
resolves $\E$ then 
\[\E\to ((\I^\bullet)^{\#})^{-}\]
is a resolution as $w$-curved modules. We will be particularly interested in the cases when $(\F^\bullet)^\#$ consists of flat sheaves, vector bundles, or locally free sheaves and when $(I^\bullet)^\#$ consists of injective sheaves.

\section{The Coderived Category}\label{section:coderived}

We have yet to explain how we are to deal with locally contractible matrix factorizations or to justify our allegation that it is useful to pass to the larger category $\Qcoh(X,\L,w)$. 

As we mentioned above $H^0\Qcoh(w)$ is triangulated and this category, along with its triangulated structure, is reminiscent of homotopy category of complexes of quasi-coherent sheaves on $X$. As such, one is interested in localizing with respect to the ``acylic" objects, which would in particular kill the locally contractible matrix factorizations. The problem is that the usual notion of ``acyclic" has no obvious analog in $\Qcoh(X,\L,w)$ unless $w=0$. It turns out that the appropriate thing to do is to consider the exotic derived categories defined in \cite{Pos:TwoKinds}, in particular the so-called {\em coderived} category. 

\begin{dfn} We say that a curved module $\M\in H^0\Qcoh(X,\L,w)$ is {\em coacyclic}, if $\M$ is contained in the smallest triangulated category which contains the total curved modules 
\[\Tot(\mathcal A\to \mathcal B\to \mathcal C)\]
for all short exact sequences in $Z^0\Qcoh(w)$ and which is in addition closed under arbitrary direct sums. We will denote the triangulated category of coacyclic objects by $Coac(X,\L,w)$.
\end{dfn}

\begin{dfn} The coderived category of $\Qcoh(X,\L,w)$, denoted $D^{co}(\Qcoh(X,\L,w))$ is the Verdier quotient
\[D^{co}(\Qcoh(X,\L,w))=H^0\Qcoh(X,\L,w)/Coac(X,\L,w)\]
We will call the morphisms of $Z^0\Qcoh(X,\L,w)$ and $H^0 \Qcoh(X,\L,w)$ which become isomorphisms in the coderived categories weak equivalences. 
\end{dfn}

\begin{rmk}\label{rmk:resolutions}
One can easily check using induction that the coaccyclic modules contain the total curved modules of arbitrarily long, but finite, exact sequences. In particular a curved module $\E$ is weakly equivalent to any of its finite right or left resolutions. The following lemma and corollary show that $\E$ is in fact weakly equivalent to any of of its {\em infinite} right resolutions.
\end{rmk}

\begin{lem} The category $Coac(X, \L,w)$ is closed under taking directed homotopy colimits. 
\end{lem}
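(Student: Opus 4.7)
The plan is to realize a directed homotopy colimit in $H^0\Qcoh(X,\L,w)$ via the standard Milnor telescope construction, and then observe that this construction uses only direct sums and cones, both of which preserve coacyclicity by definition of $Coac(X,\L,w)$.

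More precisely, I would proceed as follows. First, reduce to the case of a sequential directed system $\M_0 \to \M_1 \to \M_2 \to \cdots$ in $Z^0\Qcoh(X,\L,w)$ with each $\M_i$ coacyclic. (For a general filtered diagram, the homotopy colimit is computed by the same telescope construction on the indexing poset, so the argument below carries over verbatim after choosing a cofinal sequence or working with the full telescope.) Recall that in any triangulated category with arbitrary direct sums, the directed homotopy colimit of such a system is defined as the cone of the ``identity minus shift'' map
\[
\hocolim \M_i \;=\; \mathrm{Cone}\Bigl(\, \bigoplus_i \M_i \xrightarrow{\,1-\sigma\,} \bigoplus_i \M_i \,\Bigr),
\]
where $\sigma$ is built from the structure maps of the system; one should spend a sentence verifying that this construction makes sense for curved modules, where the cone may be formed just as in the setting of complexes of sheaves (as noted earlier in the excerpt).

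Second, invoke the definition of $Coac(X,\L,w)$: it is a triangulated subcategory of $H^0\Qcoh(X,\L,w)$ closed under arbitrary direct sums. Therefore $\bigoplus_i \M_i$ is coacyclic, being a direct sum of coacyclic objects. The morphism $1-\sigma$ is then a morphism between two coacyclic objects, and since $Coac(X,\L,w)$ is a triangulated subcategory, it is closed under cones of morphisms between its objects. Consequently $\hocolim \M_i$ is coacyclic.

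The main (and essentially only) obstacle is the preliminary point: checking that the Milnor telescope formula genuinely computes the homotopy colimit in the triangulated category $H^0\Qcoh(X,\L,w)$, i.e.\ that this category supports enough of the homotopical infrastructure for the usual construction to apply. Since $Z^0\Qcoh(X,\L,w)$ is abelian with arbitrary direct sums and its homotopy category carries the triangulated structure described in the excerpt (with cones formed exactly as for complexes of sheaves), this is essentially a routine adaptation of the classical argument; once granted, closure under direct sums and cones built into the definition of $Coac$ delivers the conclusion with no further work.
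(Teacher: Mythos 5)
Your argument is correct and is essentially the paper's own proof: the homotopy colimit is realized as the cone of the map $1-\sigma$ between direct sums of the terms, and coacyclicity follows because $Coac(X,\L,w)$ is a triangulated subcategory closed under arbitrary direct sums. The extra detail you supply about the telescope construction in $H^0\Qcoh(X,\L,w)$ is fine but not a different method.
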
 
\begin{proof} This is true of any triangulated category which is closed under taking direct sums, since the homotopy colimit is computed as the cone over a particular map between the direct sums of the objects.
\end{proof} 

\begin{cor} If 
\[\E\to \I_1\to \I_2 \dots \] is an exact sequence in $Z^0\Qcoh(X,\L,w)$ then the canonical map $\E\to \Tot(I_\bullet)$ is a weak equivalence. 
\end{cor}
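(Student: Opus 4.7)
The plan is to exhibit the mapping cone of $\E \to \Tot(\I_\bullet)$ as a directed homotopy colimit of coacyclic objects and then invoke the preceding lemma. First, up to the standard sign and shift conventions used in forming a total curved module, the cone $\mathrm{cone}(\E \to \Tot(\I_\bullet))$ is naturally identified with $\Tot(\E \to \I_1 \to \I_2 \to \cdots)$, i.e.\ the total of the entire exact complex of curved modules viewed as a complex in $Z^0\Qcoh(X,\L,w)$. It therefore suffices to prove that this total is coacyclic.

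For the finite approximations, set $Z_n := \ker(\I_n \to \I_{n+1})$ for $n \geq 1$. Exactness of the given sequence yields $Z_1 \cong \E$ together with short exact sequences $0 \to Z_n \to \I_n \to Z_{n+1} \to 0$. Define
\[ F^n := \Tot(\E \to \I_1 \to \cdots \to \I_{n-1} \to Z_n), \]
which is the total of a finite exact sequence in $Z^0\Qcoh(X,\L,w)$ and therefore coacyclic by Remark \ref{rmk:resolutions}. The maps $Z_n \hookrightarrow \I_n$ composed with $\I_n \twoheadrightarrow Z_{n+1}$ make the $F^n$ into a directed system via natural monomorphisms $F^n \hookrightarrow F^{n+1}$, and a direct bookkeeping check gives $\colim_n F^n \cong \Tot(\E \to \I_\bullet)$: position $0$ always contains $\E$, while for $k \geq 1$ the transitional $Z_k$ appearing at stage $F^k$ is absorbed into $\I_k$ at stage $F^{k+1}$ and beyond.

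To conclude, since the transitions are monomorphisms, the telescope sequence
\[ 0 \to \bigoplus_n F^n \xrightarrow{\;1-s\;} \bigoplus_n F^n \to \colim_n F^n \to 0 \]
is short exact in $Z^0\Qcoh(X,\L,w)$. Its total is coacyclic by Remark \ref{rmk:resolutions}, and the distinguished triangle it induces in the coderived category identifies $\colim_n F^n$ with the homotopy colimit $\hocolim_n F^n$. The lemma then yields that $\hocolim_n F^n$ is coacyclic, so $\Tot(\E \to \I_\bullet)$ is coacyclic and $\E \to \Tot(\I_\bullet)$ is a weak equivalence.

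The main subtlety is the choice of truncation: the naive brutal truncation $\Tot(\E \to \I_1 \to \cdots \to \I_n)$ is not itself coacyclic, since the truncated sequence fails to be exact at $\I_n$. Replacing the last term by $Z_n$ restores exactness and makes Remark \ref{rmk:resolutions} applicable, but one must then verify that these good truncations fit into a directed system whose abelian colimit reproduces the desired total, and that this colimit agrees with the homotopy colimit in the coderived category so that the lemma applies.
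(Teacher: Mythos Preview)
Your proof is correct and follows essentially the same route as the paper: identify the cone with $\Tot(\E\to\I_\bullet)$, express this as a directed (homotopy) colimit of the totals of the canonical truncations, observe each of these is coacyclic, and apply the lemma. Your $F^n$ is exactly the paper's $\Tot(\tau^n(\E\to\I^\bullet))$; you simply spell out the telescope argument that identifies the abelian colimit with the homotopy colimit in the coderived category, which the paper leaves implicit.
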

\begin{proof} We can identify 
\[Cone(\E\to \Tot(\I^\bullet))=\Tot(\E\to I^\bullet)=\hocolim_n \Tot(\tau^n(\E\to I^\bullet))\]
where $\tau^n$ denotes the canonical truncation. Each $\Tot(\tau^n(\E\to \I^\bullet))$ is the total complex of a finite exact sequence of curved modules therefore is in $Coac(X,w,\L)$ (cf. Remark \ref{rmk:resolutions}) and then by the lemma the homotopy colimit is coacyclic as well.
\end{proof}

The above corollary and the discussion involving the existence of injective resolutions in $Z^0\Qcoh(X,\L,w)$ which concluded the previous section essentially give the following corollary.

\begin{cor} \label{cor:Coderived} For every $\M\in H^0\Qcoh(X,\L,w)$ there is a triangle 
\[\mathcal A\to \M\to \mathcal I\to \mathcal A[1]\]
where $\mathcal A$ is coacyclic and $\I$ is graded-injective. In particular the coderived category is equivalent to the homotopy category of graded-injective curved modules.
\end{cor}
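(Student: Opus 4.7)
My plan is to construct the desired triangle by forming the total curved module of a graded-injective resolution of $\M$, and then to derive the ``in particular'' statement from a vanishing of Homs into graded-injectives in the style of Positselski.

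First I would produce an injective resolution
\[0\to \M^\#\to N^0 \to N^1\to\cdots\]
of the underlying graded module in the abelian category of graded quasi-coherent $S(\L)^\#$-modules, which has enough injectives since $X$ is noetherian. Applying the right adjoint $-$ of $\#$ described at the end of Section \ref{section:MF}, and setting $\J^i:=(N^i)^-$, promotes this to an exact sequence
\[0\to \M\to \J^0\to \J^1\to\cdots\]
in $Z^0\Qcoh(X,\L,w)$; each $\J^i$ is graded-injective, since its underlying graded object is by construction a direct sum of two shifts of the injective $N^i$ (twisted by the invertible $\L$, which preserves injectivity). Setting $\I:=\Tot(\J^\bullet)$, the preceding corollary gives that the canonical morphism $\M\to \I$ is a weak equivalence. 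Moreover $\I$ is itself graded-injective, since $\I^\#$ is a direct sum of the graded pieces of the $N^i$ and noetherianity of $X$ preserves injectivity under arbitrary direct sums. Completing $\M\to \I$ to a distinguished triangle in $H^0\Qcoh(X,\L,w)$ then produces
\[\mathcal A\to \M\to \I\to \mathcal A[1]\]
with $\mathcal A$ coacyclic, as the fiber of a weak equivalence.

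For the ``in particular'' clause, one must show additionally that the induced composition from $H^0$ of the full subcategory of graded-injective curved modules into $D^{co}(\Qcoh(X,\L,w))$ is an equivalence of categories. Essential surjectivity is immediate from the triangle just constructed. Full faithfulness reduces to the vanishing $\Hom_{H^0\Qcoh(w)}(\mathcal A,\I)=0$ for every coacyclic $\mathcal A$ and graded-injective $\I$. I expect this to be the main technical obstacle. The class of $\mathcal A$ for which the vanishing holds is automatically triangulated and closed under coproducts, since $\Hom(-,\I)$ converts direct sums to products and converts distinguished triangles to distinguished triangles of complexes of $k$-vector spaces; by the inductive definition of $Coac$ it therefore suffices to verify the vanishing when $\mathcal A=\Tot(\mathcal B_1\to \mathcal B_2\to \mathcal B_3)$ is the total of a short exact sequence in $Z^0\Qcoh(w)$. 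For such $\mathcal A$, graded-injectivity of $\I^\#$ splits the resulting short exact sequence of graded Hom-sheaves in each degree, which in turn lets one manufacture an explicit contracting homotopy on the Hom-complex $\Hom(\mathcal A,\I)$; this is the argument from \cite{Pos:TwoKinds}.
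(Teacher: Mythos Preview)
Your proposal is correct and follows essentially the same strategy as the paper's proof: construct the injective replacement via the $(-)^-$ adjoint and the preceding corollary, then deduce the equivalence from the general Verdier-localization principle once one knows $\Hom(\mathcal C,\I)$ is acyclic for $\mathcal C$ coacyclic and $\I$ graded-injective, which reduces to the case of totals of short exact sequences. The paper is much terser (``follows from the existence of injective replacements'' and ``the statement is obvious''), while you spell out the noetherian argument for direct sums of injectives and the triangulated/coproduct reduction explicitly, but the underlying logic is identical.
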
 
\begin{proof} The first claim follows from the existence of injective replacements. That the coderived category is equivalent to the homotopy category of injective modules will follow from the general theory of Verdier localization (see for example \cite{Pos:TwoKinds}) provided we can prove that $Hom(\mathcal C, \I)$ is acyclic whenever $\mathcal C$ is coacyclic and $\I$ is graded injective. For this it suffices to consider the case when $\mathcal C$ is the total curved module of a short exact sequence of curved modules, for which the statement is obvious.
\end{proof} 

Corollary \ref{cor:Coderived} tells us how to compute the homs in the coderived category: we pick some graded-injective replacements $\I$ of $\M$ and $\J$ of $\N$ then 
\[\Hom_{D^{co}\Qcoh(X,\L,w)}(\M,\N)=H^0\Hom_{\Qcoh(X,\L,w)}(\I,\J).\]

The category $\mathbf \Qcoh(X)$ of quasi-coherent sheaves on $X$ is a Grothendieck category and therefore admits functorial injective resolutions. Such a functor can be used to a functorial injective replacement (by simply taking the direct sum total complex) $K:\Qcoh(X,\L,w)\to Inj(X,\L,w)$, where $Inj(X,\L,w)$ denotes the full DG subcategory of $\Qcoh(X,\L,w)$ formed by the objects whose graded components are injective as $\O_X$ modules. We will denote by $\MF_{loc}(X,\L,w)$ the full subcategory of $Inj(X,\L,w)$ formed by (the images of) matrix factorizations. We will will often simply write $\RHom(\E,\F)$ for 
\[\Hom_{\MF_{loc}(X,\L,w)}(\E,\F)=\Hom_{\Qcoh(X,\L,w)}(K(\E),K(\F)).\]

We will make use of \cite{Pos:CohMF} Corollary 2.3(l):
\begin{prop}[Positselski] \label{Prop:Gens} The image of the category $\Coh(X,\L,w)$ forms a set of compact generators in $D^{co}\Qcoh(X,\L,w)$.
\end{prop}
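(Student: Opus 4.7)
The plan is to verify compactness and generation separately.

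For compactness of any $\M \in \Coh(X,\L,w)$, I would use Corollary \ref{cor:Coderived} to compute $\Hom_{D^{co}\Qcoh(X,\L,w)}(\M,-)$ as $H^0 \Hom_{\Qcoh(X,\L,w)}(\M, K(-))$ for the graded-injective replacement functor $K$. Given a family $\{\N_\alpha\}$, I would first check that $\bigoplus_\alpha K(\N_\alpha)$ serves as a graded-injective replacement of $\bigoplus_\alpha \N_\alpha$: it is graded-injective because $X$ is noetherian (so arbitrary direct sums of injective $\O_X$-modules are injective), and the cone of $\bigoplus_\alpha \N_\alpha \to \bigoplus_\alpha K(\N_\alpha)$ is a direct sum of coacyclics, hence coacyclic because $Coac(X,\L,w)$ is by definition closed under direct sums. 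Since $\M$ has coherent graded components on a noetherian (quasi-compact) scheme, both $\Hom_{\O_X}(\M_i,-)$ and global sections commute with direct sums of quasi-coherent sheaves, so $\Hom_{\Qcoh(X,\L,w)}(\M, \bigoplus_\alpha K(\N_\alpha)) = \bigoplus_\alpha \Hom_{\Qcoh(X,\L,w)}(\M, K(\N_\alpha))$, and compactness follows by passing to $H^0$.

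For generation, the key step is that every $\M \in \Qcoh(X,\L,w)$ is a filtered colimit of coherent sub-CDG-modules. Given a coherent subsheaf $\G \subseteq \M_i$, the smallest sub-CDG-module of $\M$ containing $\G$ has graded components formed from $\G$, its image under $e$, and their $\L$-twists; each such component is coherent because coherence is preserved under $\L$-twists, subsheaves, and quotients on a noetherian scheme. Writing $\M_0$ and $\M_1$ as filtered unions of coherent subsheaves and then closing each such subsheaf under the CDG structure exhibits $\M$ as a filtered colimit in the abelian category $Z^0\Qcoh(X,\L,w)$ of coherent sub-CDG-modules.

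Finally, I would identify this abelian filtered colimit with a homotopy colimit in $D^{co}\Qcoh(X,\L,w)$. Since $Z^0\Qcoh(X,\L,w)$ is a Grothendieck abelian category, filtered colimits are exact, so there is a short exact sequence $0 \to \bigoplus \M_\alpha \xrightarrow{1-\mathrm{shift}} \bigoplus \M_\alpha \to \colim \M_\alpha \to 0$ in $Z^0\Qcoh(X,\L,w)$. By the definition of $Coac(X,\L,w)$ the total of this short exact sequence is coacyclic, producing a distinguished triangle in $D^{co}\Qcoh(X,\L,w)$ that identifies $\M = \colim \M_\alpha$ with the cone of $1-\mathrm{shift}$, i.e.\ with $\hocolim \M_\alpha$. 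Hence $\M$ lies in the smallest triangulated subcategory of $D^{co}\Qcoh(X,\L,w)$ containing $\Coh(X,\L,w)$ and closed under direct sums, and combined with compactness this establishes that $\Coh(X,\L,w)$ is a set of compact generators. I expect the trickiest points to be the verification that $\bigoplus K(\N_\alpha)$ is genuinely a graded-injective replacement of $\bigoplus \N_\alpha$, where the noetherian hypothesis is essential, and the identification of the abelian filtered colimit with the homotopy colimit inside the coderived category.
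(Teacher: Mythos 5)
The paper does not actually prove this proposition: it is imported verbatim as Corollary~2.3(l) of Positselski's \cite{Pos:CohMF}, so there is no internal proof to compare against. That said, your attempt contains a genuine gap in the generation half.

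Your compactness argument is essentially sound: on a noetherian scheme direct sums of injective quasi-coherent sheaves are injective, $Coac$ is closed under direct sums by definition, global sections commutes with direct sums, and $\sHom(\M_i,-)$ for $\M_i$ coherent commutes with arbitrary direct sums (via the reduction to filtered colimits of finite sub-sums). Likewise, the observation that every object of $Z^0\Qcoh(X,\L,w)$ is a directed union of coherent sub-CDG-modules is correct, and your verification that the sub-CDG-module generated by a coherent subsheaf has coherent graded components is the right calculation.

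The gap is the last step. The short exact sequence
$0 \to \bigoplus_\alpha \M_\alpha \xrightarrow{1-\mathrm{shift}} \bigoplus_\alpha \M_\alpha \to \colim_\alpha \M_\alpha \to 0$
exists only when the index diagram is a sequence indexed by $\mathbb{N}$: the map ``shift'' sends each $\M_n$ into $\M_{n+1}$ along the transition map, and this presentation of the colimit is special to $\omega$-indexed chains. The poset of coherent sub-CDG-modules of an arbitrary $\M\in\Qcoh(X,\L,w)$ is directed but has no reason to admit a countable cofinal chain, and for a general directed poset there is no ``shift'' endomorphism of $\bigoplus_\alpha \M_\alpha$ whose cokernel is the colimit. (The standard presentation of a filtered colimit is as the cokernel of a map $\bigoplus_{\alpha\le\beta}\M_\alpha\to\bigoplus_\alpha \M_\alpha$, and the kernel of that map is not itself a direct sum of $\M_\alpha$'s.) Passing to a well-ordered continuous chain does not fix this either, because at a limit ordinal of uncountable cofinality one again needs to realize a non-sequential union as a sequential homotopy colimit. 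Note that the paper's Lemma~3.3 on ``directed homotopy colimits'' is proved by exactly the cone-of-$(1-\mathrm{shift})$ argument, but it is only ever applied to the $\mathbb{N}$-indexed right resolution in Corollary~3.4, where it is valid; you are applying the same device to a much larger directed system.

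The way Positselski actually closes the generation step avoids filtered colimits in the triangulated category altogether. Replace $\M$ by a graded-injective model $\I$ (Corollary~\ref{cor:Coderived}). Suppose $\Hom_{\Qcoh}(\C,\I)$ is acyclic for every coherent $\C$; one then shows directly that $\I$ is contractible. Consider pairs $(\mathcal{M},h)$ of a sub-CDG-module $\mathcal M\subseteq\I$ and a degree $-1$ map $h:\mathcal M\to\I$ with $\partial h$ equal to the inclusion, ordered by extension. Zorn's lemma gives a maximal such pair, and maximality forces $\mathcal M=\I$: given a coherent $\N\subseteq\I$ not contained in $\mathcal M$, the short exact sequence of Hom complexes
$0\to\Hom(\mathcal (\mathcal M+\N)/\mathcal M,\I)\to\Hom(\mathcal M+\N,\I)\to\Hom(\mathcal M,\I)\to 0$
(exact by graded-injectivity of $\I$) together with acyclicity of $\Hom((\mathcal M+\N)/\mathcal M,\I)$ --- which is a coherent module since $(\mathcal M+\N)/\mathcal M\cong\N/(\N\cap\mathcal M)$ on a noetherian scheme --- lets you correct any lift of $h$ so that its boundary is the inclusion of $\mathcal M+\N$. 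Thus $\I$ is contractible, hence zero in $D^{co}$, and $\Coh$ generates. This is the replacement you need for the invalid $1-\mathrm{shift}$ step.
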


This proposition has as an immediate corollary :
\begin{cor} \label{cor:gens} Suppose $X$ is smooth. The image of $\MF(X,\L,w)$ forms a set of compact generators in $D^{co}\Qcoh(X,\L,w)$. 
\end{cor}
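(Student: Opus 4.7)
The plan is to reduce to Proposition \ref{Prop:Gens}. Once we show that every object of $\Coh(X,\L,w)$ is weakly equivalent to an object of $\MF(X,\L,w)$, the compact generators supplied by that proposition lie, up to isomorphism in $D^{co}\Qcoh(X,\L,w)$, in $\MF(X,\L,w)$, and both compactness and generation of $\MF(X,\L,w)$ follow immediately since weak equivalences are isomorphisms in $D^{co}$.

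To produce such a weak equivalence, I would fix $\M \in \Coh(X,\L,w)$ and pass to its underlying graded $S(\L)^\#$-module $\M^\#$. Via the 2-periodic identification $\M_i \otimes \L \cong \M_{i+2}$, this graded module is completely determined by the two coherent $\O_X$-modules $\M_0$ and $\M_1$. Smoothness of $X$ bounds the projective dimension of coherent $\O_X$-modules, so $\M^\#$ admits a finite graded resolution $(F^\bullet)^\# \to \M^\#$ whose terms have locally free graded components of finite rank. Applying the exact left adjoint $~^+$ of Section \ref{section:MF}, and invoking the construction recorded there (``when $(\F^\bullet)^{\#}\to \E^\#$ is a resolution of $\E$ as a graded $S(\L)$-module then $((\F^\bullet)^{\#})^+\to \E$ resolves $\E$ as a $w$-curved $S(\L)$-module''), transports this into a finite resolution $((F^\bullet)^\#)^+ \to \M$ in the abelian category $Z^0\Qcoh(X,\L,w)$ whose terms lie in $\MF(X,\L,w)$.

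Forming the direct sum total curved module of this finite resolution produces an object of $\MF(X,\L,w)$, since each of its graded components is a finite direct sum of vector bundles, hence again a vector bundle. By Remark \ref{rmk:resolutions} this total matrix factorization is weakly equivalent to $\M$. Combining with Proposition \ref{Prop:Gens} finishes the proof.

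The main potential obstacle lies in the interplay between the graded and curved structures: the $~^+$ functor must genuinely turn an exact sequence of graded $S(\L)^\#$-modules into an exact sequence of curved modules in $Z^0\Qcoh(X,\L,w)$, so that the curved differential assembles correctly on the total complex. This is precisely what the adjunction and the exactness of $~^+$ (both asserted in Section \ref{section:MF}) are designed to handle. Smoothness of $X$ enters only to bound the length of the graded resolution; without finiteness, the total object would be an infinite construction and Remark \ref{rmk:resolutions} would not apply without an additional homotopy-colimit argument of the type used in the corollary immediately preceding Corollary \ref{cor:Coderived}.
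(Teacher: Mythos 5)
Your argument is correct and is essentially the paper's own proof: compactness comes from Proposition \ref{Prop:Gens}, and generation is obtained by taking a finite locally free graded resolution of $\M^\#$ (using smoothness), applying $~^+$ to get a finite resolution by matrix factorizations, and totalizing, which is weakly equivalent to $\M$ by Remark \ref{rmk:resolutions}.
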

\begin{proof} We need only to show that matrix factorizations generate since proposition \ref{Prop:Gens} already implies that they are compact. Since $X$ is smooth, for a coherent curved module $\M$ we may form a finite resolution of $\M^{\#}$ by a complex of sheaves whose graded components are vector bundles. Applying the $~^+$ functor yields a resolution of $\M$ by matrix factorizations. Then $\M$ is weakly equivalent to the matrix factorization obtained by taking the total curved module of this resolution (c.f. once more remark \ref{rmk:resolutions}).
\end{proof}

This corollary then gives us an important characterization of the category of modules for $\MF_{loc}(X,\L,w)$ and in particular justifies our claim that is useful to expand our view to the whole category of quasi-coherent curved modules:

\begin{thm} The functor $\M\mapsto \Hom(-,\M)|_{\MF_{loc}}$ induces a triangulated equivalence between between $D^{co}\Qcoh(X,\L,w)$ and $D(\MF_{loc}(X,\L,w))$
\end{thm}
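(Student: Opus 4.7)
The plan is to invoke the standard derived Morita argument in the style of Keller (see e.g.~\cite{Toen:Topics}). On the one hand, Corollary \ref{cor:gens} tells us that $\MF(X,\L,w)$ forms a set of compact generators of $D^{co}\Qcoh(X,\L,w)$; on the other, the representable DG-modules $h^\E$, $\E\in\MF_{loc}$, are tautologically compact generators of $D(\MF_{loc})$. Writing $Y$ for the functor in the statement, $Y(\M)$ is the DG-module $\E\mapsto\RHom(\E,\M)=\Hom_{\Qcoh}(K(\E),K(\M))$ on $\MF_{loc}^{op}$, and the restriction of $Y$ to $\MF_{loc}$ is precisely the Yoneda embedding $\E\mapsto h^\E$.

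First I would verify that $Y$ descends to a triangulated, coproduct-preserving functor out of $D^{co}\Qcoh(X,\L,w)$. Preservation of triangles is formal. To descend through the Verdier quotient, I would show that $Y$ sends coacyclic modules to acyclic DG-modules: if $\M$ is coacyclic then $K(\M)$ is graded-injective and becomes zero in $D^{co}\Qcoh$, hence is null-homotopic (this is exactly the acyclicity observation at the end of the proof of Corollary \ref{cor:Coderived}), so $\Hom(K(\E),K(\M))$ is contractible for every $\E\in\MF$. Coproduct-preservation reduces, using that $X$ is noetherian so that arbitrary sums of graded-injectives remain graded-injective, to the claim that $\Hom(K(\E),-)$ commutes with direct sums of graded-injective objects; for a matrix factorization $\E$ this uses that the graded components of $\E$ are locally free of finite rank.

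Next I would establish full faithfulness on the compact generators. For $\E,\F\in\MF_{loc}$, the representable $h^\E=Y(\E)$ is a projective DG-module, so
\[
\Hom_{D(\MF_{loc})}(Y\E,Y\F)=H^0(Y\F)(\E)=H^0\Hom_{\Qcoh}(K(\E),K(\F))=\Hom_{D^{co}\Qcoh}(\E,\F),
\]
the last equality being Corollary \ref{cor:Coderived}. The standard bootstrap then extends this to full faithfulness on all of $D^{co}\Qcoh$: for fixed $\E\in\MF$, the class of $\M\in D^{co}\Qcoh$ for which the comparison $\Hom(\E,\M)\to\Hom(Y\E,Y\M)$ is an isomorphism is triangulated and closed under arbitrary direct sums (using compactness of both $\E$ and $Y\E$ together with coproduct-preservation of $Y$), and contains the generators; a dual argument in the first variable completes the argument. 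Essential surjectivity is then automatic: the essential image of $Y$ is a triangulated subcategory of $D(\MF_{loc})$ closed under arbitrary direct sums and containing every representable $h^\E=Y(\E)$, hence is all of $D(\MF_{loc})$.

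The main obstacle is the direct-sum compatibility in the first step: one must promote the comparison $K(\bigoplus\M_i)\simeq\bigoplus K(\M_i)$, which holds only up to quasi-isomorphism, to an equivalence after applying $\Hom(K(\E),-)$, using compactness of the graded components of $\E\in\MF$ to move the sum past the $\Hom$ at the chain level. Once this technical point is in hand, the Keller Morita machine delivers the equivalence without further work.
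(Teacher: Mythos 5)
Your proposal is correct and is essentially the paper's argument: the paper proves this by combining Corollary \ref{cor:gens} with the general Morita-type result (Theorem 5.1 of Dyckerhoff, i.e.\ the Keller machinery you spell out), so your write-up amounts to unwinding the proof of that cited theorem rather than citing it. The only point to phrase carefully is the coproduct step: what is really used is compactness of the images of matrix factorizations in $D^{co}\Qcoh$ (Proposition \ref{Prop:Gens}), together with computing $\RHom(\E,-)$ with $\E$ itself against graded-injective targets, rather than a literal chain-level commutation for $K(\E)$ --- but this is exactly the reduction you indicate, so there is no gap.
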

\begin{proof} In light of corollary \ref{cor:gens}, this is an application of theorem 5.1 from \cite{Dyc}.
\end{proof}

The following theorem is announced in \cite{PL}, in the case when $\L=\O_X$. We include a few details to the proof.

\begin{thm} \label{thm:cptgen}Assume that $w$ is not a zero divisor, i.e. the map $\O_X\stackrel{w}{\to} \L$ is injective and that $X$ is smooth, then the category $\MF_{loc}(X,\L,w)$ has a compact generator. 
\end{thm}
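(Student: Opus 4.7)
My approach is to construct a compact generator of $D^{co}\Qcoh(X,\L,w)$ by gluing local generators along a finite affine cover, and then to replace it by an honest matrix factorization using smoothness of $X$. Since $X$ is noetherian, it is quasi-compact, so I may cover it by finitely many affine opens $U_1,\ldots,U_n$ on which $\L$ trivializes. On each $U_i=\mathrm{Spec}(R_i)$ the restricted potential $w_i:=w|_{U_i}$ is still a non-zero divisor in the regular (smooth) ring $R_i$, so $\mathrm{Spec}(R_i/(w_i))$ is an effective Cartier divisor in a regular scheme, hence a Gorenstein hypersurface of finite Krull dimension. The first step is to invoke the compact generator result in the affine smooth setting --- either directly via Dyckerhoff-type arguments for matrix factorizations of a non-zero divisor over a regular ring, or indirectly via Orlov's equivalence with the singularity category of $R_i/(w_i)$, which is compactly generated by a single object thanks to Rouquier's finite-dimension theorem --- to obtain a compact generator $G_i\in\MF_{loc}(U_i,\O_{U_i},w_i)$.

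Let $j_i:U_i\hookrightarrow X$ denote the open inclusions. Each $j_i$ is quasi-compact and quasi-separated (as $X$ is noetherian), and the derived pushforward $Rj_{i,*}$ extends to the coderived category of curved quasi-coherent modules; I would verify that it preserves compact objects by a projection-formula argument reducing to the standard uncurved statement on quasi-coherent sheaves. Set
\[
G' \;:=\; \bigoplus_{i=1}^{n} Rj_{i,*}G_i \;\in\; D^{co}\Qcoh(X,\L,w).
\]
This is compact as a finite sum of compact objects, and I claim it generates: if $\RHom(G',\M)=0$, then by adjunction $\RHom(G_i,j_i^{*}\M)=0$ for every $i$, so $j_i^{*}\M\cong 0$ since $G_i$ generates $D^{co}\Qcoh(U_i,\O_{U_i},w_i)$. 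Zariski descent for the coderived category (a consequence of forming functorial graded-injective replacements Cech-locally) then forces $\M\cong 0$.

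To upgrade $G'$ to an actual matrix factorization, I appeal to corollary \ref{cor:gens}, which (using smoothness of $X$) guarantees that $\MF(X,\L,w)$ is a set of compact generators of $D^{co}\Qcoh(X,\L,w)$. In particular the compact object $G'$ lies in the thick closure of $\MF(X,\L,w)$, so following the construction in the proof of corollary \ref{cor:gens} --- resolving the graded components by locally free sheaves and applying the $^{+}$ functor --- I can produce a finite resolution of $G'$ by matrix factorizations and take the associated total curved module $G$, which is then a matrix factorization weakly equivalent to $G'$ and hence a compact generator of $\MF_{loc}(X,\L,w)$. I expect the central technical obstacle to be the compactness-preservation of $Rj_{i,*}$ in the coderived category of curved modules: this is not addressed in the excerpt and requires its own lemma, paralleling the standard argument in the uncurved quasi-coherent setting while carefully tracking the curvature datum and the coderived (as opposed to ordinary derived) localization.
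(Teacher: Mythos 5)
There are two genuine gaps in your gluing argument, and they occur exactly at the steps you flag as ``technical.'' First, the compactness of $Rj_{i,*}G_i$: the ``standard uncurved statement'' you hope to parallel does not exist --- for an open immersion $j$, the derived pushforward of a perfect complex is in general not perfect (hence not compact in $D_{qc}$); already for $j:\mathbb A^2\setminus\{0\}\hookrightarrow \mathbb A^2$ the complex $Rj_*\O_U$ has non-coherent $H^1$ and is not compact. The same failure persists in the coderived setting, where by Proposition \ref{Prop:Gens} compacts are (up to thick closure) images of coherent curved modules, and pushforward along an open immersion destroys coherence. Second, your generation argument runs the adjunction backwards: $j_i^*$ is \emph{left} adjoint to $Rj_{i,*}$, so $\RHom_X(Rj_{i,*}G_i,\M)$ is not computed by $\RHom_{U_i}(G_i,j_i^*\M)$; the adjunction you would need is the one attached to an extension-by-zero functor $j_{i,!}$, which does not exist for quasi-coherent (curved) modules. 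So as written, $G'$ is neither known to be compact nor known to be a generator. A local-to-global proof can be made to work, but it requires the Thomason--Neeman style induction on the number of opens in the cover (compact generation of subcategories with support, Mayer--Vietoris for the gluing), as carried out for matrix factorizations by Preygel and Lin--Pomerleano; it is not a formal pushforward argument.

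The paper avoids localization altogether: it invokes Positselski's global Orlov-type theorem to identify $D^{abs}\Coh(X,\L,w)=\Coh(X,\L,w)/\bigl(Coac(X,\L,w)\cap\Coh(X,\L,w)\bigr)$ with the relative singularity category $D^b_{Sing}(X_0/X)$ of the hypersurface $X_0=\{w=0\}$, uses Rouquier's theorem that $D^b(\mathbf{Coh}(X_0))$ has a classical generator (this is where smoothness/finite dimension enters, in place of your affine-local Dyckerhoff step), notes that this generator descends to the quotient and hence classically generates $D^{abs}\Coh(X,\L,w)$, and then replaces the resulting coherent curved module by a weakly equivalent matrix factorization via Corollary \ref{cor:gens}, exactly as in your final upgrading step. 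If you want to salvage your strategy, replace the pushforward construction by that induction-on-the-cover argument; otherwise the global route through $D^b_{Sing}(X_0/X)$ and Rouquier is the shorter path.
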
 
\begin{proof} We use the global version of Orlov's theorem given as the Main Theorem (2.7) from \cite{Pos:CohMF} to get an equivalence
\[\Coh(X,\L,w)/Coac(X,\L,w)\cap \Coh(X,\L,w)\cong D^{b}_{Sing}(X_0/X)\]
where $X_0$ is closed subscheme defined by $w=0$ and $D^b_{sing}(X_0/X)$ is the relative singularity category defined in {\em Loc. Cit}. As a piece of notation we will set
\[D^{abs}\Coh(X,\L,w):=\Coh(X,\L,w)/Coac(X,\L,w)\cap \Coh(X,\L,w).\] By Rouquier theorem 7.39 \cite{Rou} the bounded derived category of coherent sheaves on $X_0$ has a classical generator, $\mathcal G$. This classical generator then descends to a classical generator for the quotient $\D^{b}_{Sing}(X_0/X)$ and therefore gives a classical generator (which we will also call $\mathcal G$) for the category $D^{abs}(\Coh(X,\L,w))$. By corollary \ref{cor:gens} since $\mathcal G$ is coherent, there is a weak equivalence between $\mathcal G$ and some matrix factorization $\mathcal E_{\mathcal G}$. By corollary \ref{cor:gens}, $D^{abs}\Coh(X,\L,w)$ generates $D^{co}\Qcoh(X,\L,w)$, and therefore $\E_{\mathcal G}$ also generates $D^{co}\Qcoh(X,\L,w)$. Applying the injective replacement functor $K:\Qcoh(X,\L,w)\to Inj(X,\L,w)$, and using Corollary \ref{cor:Coderived} we get $K(\mathcal E_{\mathcal G})$ is a compact generator for $Inj(X,\L,w)$. By definition $K(\E_G)$ lies in $\MF_{loc}(X,\L,w)$ therefore is a compact generator for $\MF_{loc}(X,\L,w)$ as well. 
\end{proof}

\begin{lem} \label{lem:exteriorProduct} Assume $X$ is a smooth $k$-scheme, with $k$ a perfect field. Let $w\in \O_X$ and let $\tilde{w}$ denote the doubled potential $\tilde{w}=p_1^\ast(w)-p_2^\ast(w)$ on $X\times X$. The exterior product induces a quasi-equivalence
\[\MF_{loc}(X,\O_X,w)\otimes \MF_{loc}(X,\O_X,w)^{op}-Mod\cong \MF_{loc}(X\times X,\O_{X\times X},\tilde{w})-Mod\]
and under this isomorphism the diagonal bimodule corresponds the the diagonal curved module $\Delta_\ast S(\O_X)$.
\end{lem}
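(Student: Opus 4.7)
The plan is to realize both sides as modules over the same DG-algebra via derived Morita theory. By Proposition~\ref{prop:functors}(4), the duality functor $\vee$ is a DG-equivalence $\MF(X,\O_X,w)^{op}\simeq \MF(X,\O_X,-w)$, and it extends to a quasi-equivalence of the localized categories $\MF_{loc}(X,\O_X,w)^{op}\simeq \MF_{loc}(X,\O_X,-w)$. Under the mild assumption that $w$ is not a zero divisor, Theorem~\ref{thm:cptgen} supplies a compact generator $G$ of $\MF_{loc}(X,\O_X,w)$; the general case reduces to this by working with the set of coherent compact generators guaranteed by Corollary~\ref{cor:gens}. Then $G^\vee$ is a compact generator of $\MF_{loc}(X,\O_X,-w)$, so $G\otimes G^\vee$ is a compact generator of the tensor DG-category on the left, with endomorphism DG-algebra $A=\RHom(G,G)\otimes_k \RHom(G^\vee,G^\vee)$.

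Next I would define the exterior product by $\E\boxtimes \F := p_1^\ast \E \tens{S(\O_{X\times X})} p_2^\ast \F$; this lands in $\MF(X\times X,\O_{X\times X},\tilde w)$ because $\tilde w=p_1^\ast w+ p_2^\ast(-w)$, and it extends to $\MF_{loc}$ via functorial injective replacement. The heart of the argument is then the Kunneth isomorphism
\[\RHom_{X\times X,\tilde w}(\E_1\boxtimes \F_1,\E_2\boxtimes \F_2) \simeq \RHom_{X,w}(\E_1,\E_2)\otimes_k \RHom_{X,-w}(\F_1,\F_2),\]
which I would prove first for $\E_i,\F_i$ matrix factorizations (hence locally free) by adjunction and the projection formula, using that over a perfect field $k$ the exterior tensor of flat (resp.\ injective) resolutions on $X$ remains flat (resp.\ injective) on $X\times X$. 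The general case then follows by resolving by matrix factorizations via Corollary~\ref{cor:gens}.

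Granting the Kunneth isomorphism, $G\boxtimes G^\vee$ is a compact generator of $\MF_{loc}(X\times X,\O_{X\times X},\tilde w)$: compactness is immediate, and for generation one uses that $X\times X$ is smooth (this is where perfectness of $k$ is essential) together with Corollary~\ref{cor:gens} applied on the product, reducing to showing that every coherent curved module on $X\times X$ lies in the triangulated closure of exterior products $\E\boxtimes \F^\vee$. The latter follows from the existence of a resolution of $\Delta_\ast \O_X$ by external products of coherent sheaves on the smooth product $X\times X$. The Kunneth isomorphism further identifies the endomorphism DG-algebra of $G\boxtimes G^\vee$ with $A$, so Toen's derived Morita theorem in the form of \cite{Dyc} Theorem 5.1 (as already used in the proof of Theorem~\ref{thm:cptgen}) delivers the desired quasi-equivalence of module DG-categories.

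For the diagonal bimodule, observe that it is represented on the left by the functor $(\E,\F^{op})\mapsto \RHom_X(\E,\F)$. Using
\[\RHom_X(\E,\F) \simeq R\Gamma(X,\E^\vee \tens{S(\O_X)} \F) \simeq R\Gamma(X\times X, \Delta_\ast(\E^\vee\tens{S(\O_X)}\F))\]
together with a curved version of the projection formula $\Delta_\ast(\E^\vee\tens{S(\O_X)}\F) \simeq \Delta_\ast S(\O_X)\tens{S(\O_{X\times X})}(\E\boxtimes \F^\vee)^\vee$, Yoneda identifies the representing curved module as $\Delta_\ast S(\O_X)$, which indeed lives in $\Qcoh(X\times X,\O_{X\times X},\tilde w)$ because $\tilde w$ restricts to zero on the diagonal. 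The main obstacle is the Kunneth isomorphism of the second paragraph together with the compact generation on the product in the third; both steps genuinely require perfectness of $k$, in order to keep smoothness and flatness behaving well under $\otimes_k$.
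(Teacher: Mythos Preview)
The paper's own proof is a single sentence deferring to \cite{PL} Theorem~3.4, so any substantive comparison is with that reference rather than with the paper itself. Your sketch reconstructs the standard derived--Morita argument that \cite{PL} uses: identify $\MF_{loc}(X,w)^{op}$ with $\MF_{loc}(X,-w)$ via duality, take a compact generator $G$, establish a K\"unneth isomorphism so that $G\boxtimes G^\vee$ has the correct endomorphism algebra, and invoke \cite{Dyc} Theorem~5.1. Your identification of the diagonal bimodule via the projection formula is also the right one. So in outline your approach is correct and coincides with the intended argument.

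The one step that deserves more care is generation of $D^{co}\Qcoh(X\times X,\tilde w)$ by exterior products. Your justification, ``resolve $\Delta_\ast\O_X$ by external products of coherent sheaves,'' is the right idea but does not by itself imply that an \emph{arbitrary} curved coherent module on $X\times X$ lies in the triangulated closure of exterior products; what you actually need is that every vector bundle on $X\times X$ admits a finite resolution by exterior products of vector bundles on the factors. This is where \cite{PL} (and implicitly the present paper) uses an ampleness or quasi-projectivity hypothesis on $X$: one resolves an arbitrary bundle on $X\times X$ by sums of boxes $\L_1\boxtimes\L_2$ of line bundles. Once that is in hand, Corollary~\ref{cor:gens} on $X\times X$ finishes the generation claim. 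Apart from this point your sketch is sound, and the role you assign to perfectness of $k$ (smoothness of $X\times X$, behaviour of $\otimes_k$ on injectives/flats) is exactly where it enters.
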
 
\begin{proof} This follows from the same arguments as \cite{PL} theorem 3.4.
\end{proof}

We recall from \cite{PV:HRR} for a DG-category, $\C$, we have the trace functor \[Tr: D(\C\otimes C^{op}-Mod) \to D(C(k))\] given by $Tr(M):=M\tens{C\otimes C^{op}} \Delta$, where $\Delta$ is the diagonal bimodule $\Delta(a,b)=\Hom_{\C}(b,a)$. Then by \cite{Toen:Topics} Hochschild homology is computed as $Tr(\Delta)$.

\begin{lem} \label{lem:trace} The the isomorphism $D(\MF_{loc}(X\times X,\tilde{w}))\cong D(\MF_{loc}(x,w)\otimes \MF_{loc}(X,w)^{op})$ from Lemma \ref{lem:exteriorProduct} followed by the trace functor  is quasi-isomorphic to the functor 
\[\mathbb R\Gamma(\mathbb L\Delta^{\ast}-).\]

\end{lem}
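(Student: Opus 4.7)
The plan is to recognize the composite (equivalence followed by trace) as a derived tensor product on $X\times X$ against the sheaf $\Delta_\ast S(\O_X)$, then apply the projection formula for the diagonal $\Delta\colon X\hookrightarrow X\times X$, and finally take global sections.

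Concretely, Lemma \ref{lem:exteriorProduct} identifies the diagonal bimodule with the curved sheaf $\Delta_\ast S(\O_X)$, and the equivalence between $D(\MF_{loc}(X\times X,\tilde w)\text{-}Mod)$ and $D^{co}\Qcoh(X\times X,\tilde w)$ lets me view any module as a curved sheaf on $X\times X$. Granting that the bimodule tensor product over $\MF_{loc}(X,w)\otimes\MF_{loc}(X,w)^{op}$ corresponds under the exterior-product equivalence to the sheafy tensor product over $S(\O_{X\times X})$, the trace of $\M$ becomes
\[
Tr(\M)\;\simeq\;\M\ltens{S(\O_{X\times X})}\Delta_\ast S(\O_X).
\]
I would then invoke a curved-sheaf version of the projection formula for the closed immersion $\Delta$,
\[
\M\ltens{S(\O_{X\times X})}\Delta_\ast S(\O_X)\;\simeq\;\Delta_\ast\bigl(\mathbb L\Delta^\ast\M\bigr),
\]
and apply $\mathbb R\Gamma$. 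Since $\Delta$ is a closed immersion, $\Delta_\ast$ is exact and $\mathbb R\Gamma(X\times X,\Delta_\ast(-))\simeq\mathbb R\Gamma(X,-)$, yielding the claimed $\mathbb R\Gamma(X,\mathbb L\Delta^\ast(-))$.

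To justify the two nontrivial identifications I would pass to compact generators. By Corollary \ref{cor:gens} combined with Lemma \ref{lem:exteriorProduct}, $D^{co}\Qcoh(X\times X,\tilde w)$ is compactly generated by exterior products $\E\boxtimes\F^\vee$ with $\E,\F\in\MF(X,w)$, whose images under the inverse of the equivalence are the representable bimodules $h_\E\otimes h^\F$. Both the trace and $\mathbb R\Gamma\circ\mathbb L\Delta^\ast$ preserve direct sums and homotopy colimits, so it suffices to compare the two functors on these generators. On $\E\boxtimes\F^\vee$ the trace evaluates to $\mathbb R\Hom(\F,\E)$ by the standard calculation for a tensor of representables against the diagonal; on the sheafy side, $\mathbb L\Delta^\ast(\E\boxtimes\F^\vee)\simeq\E\otimes\F^\vee\simeq\sHom(\F,\E)$ (an honest complex of sheaves since the induced potential is $w-w=0$), and applying $\mathbb R\Gamma$ again gives $\mathbb R\Hom(\F,\E)$. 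The same reduction to generators handles the projection formula: on an exterior product it reduces to the classical sheaf-theoretic projection formula, with the curved structure contributing only formal bookkeeping since the potentials cancel on the image of $\Delta^*$.

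The main obstacle is the first identification, recognizing the categorical bimodule tensor product (built from a bar construction over the DG-category $\MF_{loc}(X,w)\otimes\MF_{loc}(X,w)^{op}$) as the sheaf-theoretic derived tensor product over $S(\O_{X\times X})$ (built via injective replacements in $Z^0\Qcoh$). The compact-generator argument above sidesteps the explicit comparison of bar-type resolutions with injective resolutions, but it relies crucially on verifying that both functors are continuous and that Lemma \ref{lem:exteriorProduct} is monoidal when restricted to the compact generators — essentially the same input used in \cite{PL} Theorem 3.4 to prove Lemma \ref{lem:exteriorProduct} itself.
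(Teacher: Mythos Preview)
Your proposal is correct and, once the projection-formula scaffolding is stripped away, is exactly the paper's argument: both $Tr$ and $\mathbb R\Gamma\circ\mathbb L\Delta^\ast$ are triangulated functors commuting with arbitrary direct sums, so it suffices to check agreement on the compact generators $\E\boxtimes\F^\vee$, where each side computes to $\RHom(\F,\E)$. The paper goes directly to this step in two lines; your preliminary discussion of the projection formula and the bimodule-vs-sheaf tensor product is not wrong, but you yourself recognize it is justified only by the compact-generator reduction, so it is unnecessary.
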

\begin{proof} Both $Tr$ and $\mathbb R\Gamma(\mathbb L \Delta^\ast-)$ are triangulated functors from \[D(\MF(X\times X,\tilde{w})\] to $C(k)$ that commute with arbitrary direct sums, so it will suffice to check that they give the same result at the compact generator of $D(\MF(X\times X,\tilde{w}))$. For this we compute
\[\mathbb R\Gamma(\mathbb L\Delta^\ast\E\boxtimes \F^{\vee})=\mathbb R \Gamma(\E\otimes \F^{\vee})=\RHom(\F,\E).\]

\end{proof}

\section{Hochschild Homology}\label{section:HH}
In this section we compute the Hochschild homology of the category of matrix factorizations in the case when $\L=\O_X$. In fact, from now on all of our results will apply only to the case $\L=\O_X$, we save the more general case for later work. We will also assume now on that $X$ is smooth. We follow very closely the computation of Hochschild cohomology which appears in \cite{PL}. An alternative computation appears in \cite{Preygel} and at this point this result is well-known to the experts. We include our computation for completeness and since we will later have use to examine more closely the particular isomorphisms needed to compare the Hochschild homology to a certain complex involving forms on $X$. 

Following \cite{PL}, we define the {\em complete bar complex} $\Bar$.  This complex has graded components $\Bar_{-q}=(p_{1,q+2})_\ast\O_{\mathfrak X^{q+2}}$ for $q\geq 0$, where $\mathfrak X^k$ is the completion of \[X^k=X\times\dots \times X\] along the diagonal and $p_{1,q+2}:X\times \dots \times X\to X\times X$ projects to the first and last factor in the obvious way. To reduce clutter with our notation, we will hence forth simply write $\O_{\mathfrak X^k}$, rather than the push forward onto the first and last factor. The reader hopefully will keep in mind that $\O_{\mathfrak X^k}$ is actually viewed as a sheaf on $X\times X$.

 The differential,
\[b:\Bar_{-q}\to\Bar_{-q+1}\] is given locally by the standard formula for the bar differential:
\[b(a_0\boxtimes\dots \boxtimes a_{q+1})=\sum_{i=0}^{q}(-1)^ia_0\boxtimes\dots \boxtimes a_ia_{i+1}\boxtimes \dots \boxtimes a_{q+1}.\]
Here (and elsewhere) we use $\boxtimes$ to emphasize that this is an external tensor (i.e) only scalars commute with it as opposed to a tensor over $\O_X$. 
We introduce a new ``differential" of degree -1, $B_{w}$, defined locally by the equation
\[B_{w}(a_0\boxtimes\dots\boxtimes a_{q+1})=\sum_{i=0}^{q}(-1)^{i}a_0\boxtimes \dots \boxtimes a_i\boxtimes w\boxtimes a_{i+1}\boxtimes \dots a_{q+1}.\]

We now define the {\em curved complete bar complex}, $\Bar_{\tilde{w}}$, as follows. This will be an object of $\Qcoh(X\times X,\O_{X\times X},\tilde{w})$, where once again $\tilde{w}=p_1^\ast(w)-p_2^\ast(w)$ and $p_i:X\times X\to X$ are the standard projections. Again this follows \cite{PL}.

  We put
\[(\Bar_{\tilde{w}})_{q}=\bigoplus_{p\equiv q\mod 2} \Bar_{-p}.\]
The map $B_{w}$ may be viewed as a map of degree 1 in $\Bar_{\tilde{w}}$ by mapping the factor 
$(\Bar_{\tilde{w}})_p$ in $(\Bar_{\tilde{w}})_q$ to $(\Bar_{\tilde{w}})_{-(p+1)}$ in $(\Bar_{\tilde {w}})_{q+1}$. We imbue $\Bar_{\tilde{w}}$ 
 with the curved differential $\partial=b+B_{w}$, then one checks
 that $B_{w}^2=0$ and then that
 \[\partial^2=bB_w+B_wb=\tilde{w}\]
 so $\Bar_{\tilde w}$ is indeed a $\tilde{w}$-curved module. 
 
 It is helpful to view $\Bar_{\tilde{w}}$ as the total complex (perhaps modulo some signs) of the following ``bi-complex":

 \begin{center}
 \begin{tikzpicture}[
cross line/.style={preaction={draw=white, -,
line width=6pt}},
descr/.style={fill=white,inner sep=2.5pt}]

 \matrix(m)[matrix of math nodes, row sep=2em, column sep=3em]
 {{}&\vdots&\vdots&\vdots\\
 \dots&0&0&0&\\
 \dots&\O_{\mathfrak X^4}&\O_{\mathfrak X^3}&\O_{\mathfrak X^2}\\
 \dots&0&0&0\\
 \dots&\O_{\mathfrak X^4}&\O_{\mathfrak X^3}&\O_{\mathfrak X^2}\\
\dots&0&0&0\\
&\vdots&\vdots&\vdots\\};

\path[->]
(m-2-1) edge[densely dotted] node[auto]{} (m-2-2)
(m-2-2) edge[densely dotted] node[auto]{} (m-2-3)
(m-2-3) edge[densely dotted] node[auto]{} (m-2-4)
(m-3-1) edge node[auto]{$b$} (m-3-2)
(m-3-2) edge node[auto]{$b$} (m-3-3)
(m-3-3) edge node[auto]{$b$} (m-3-4)
(m-4-1) edge[densely dotted] node[auto]{} (m-4-2)
(m-4-2) edge[densely dotted] node[auto]{} (m-4-3)
(m-4-3) edge[densely dotted] node[auto]{} (m-4-4)
(m-5-1) edge node[auto]{$b$} (m-5-2)
(m-5-2) edge node[auto]{$b$} (m-5-3)
(m-5-3) edge node[auto]{$b$} (m-5-4)
(m-6-1) edge[densely dotted] node[auto]{} (m-6-2)
(m-6-2) edge[densely dotted] node[auto]{} (m-6-3)
(m-6-3) edge[densely dotted] node[auto]{} (m-6-4)
(m-3-3) edge[cross line] node[left=16pt,above,sloped]{$B_{{w}}$} (m-1-2)
(m-3-4) edge[cross line] node[left=16pt,above,sloped]{$B_{{w}}$} (m-1-3)
(m-3-2) edge[cross line] node[left=16pt,above,sloped]{$B_{{w}}$} (m-1-1)
(m-7-3) edge[cross line] node[left=16pt,above,sloped]{$B_{{w}}$} (m-5-2)
(m-7-4) edge[cross line] node[left=16pt,above,sloped]{$B_{{w}}$} (m-5-3)
(m-7-2) edge[cross line] node[left=16pt,above,sloped]{$B_{{w}}$} (m-5-1)
(m-5-3) edge[cross line] node[left=16pt,above,sloped]{$B_{{w}}$} (m-3-2)
(m-5-4) edge[cross line] node[left=16pt,above,sloped]{$B_{{w}}$} (m-3-3)
(m-5-2) edge[cross line] node[left=16pt,above,sloped]{$B_{{w}}$} (m-3-1);
\end{tikzpicture}
\end{center}

There is a map $\Bar_{\tilde{w}}(X)\stackrel{\epsilon}{\to} \Delta_\ast S(\O_X)$ given by projecting the even components onto $\O_{\mathfrak X^2}$ then using the multiplication map 
\[\O_{\mathfrak X^2}\to \O_\Delta\]
and sending the odd components to 0. It is easy to check that this defines a closed degree 0 morphism of curved complexes.

\begin{lem} \label{lem:deltares} $\Bar_{\tilde{w}}\tens{S(\O_{X\times X})} \M$ is isomorphic to $\Delta\ltens{S(\O_{X\times X})}\M$ in $D(X)$ for any $-\tilde{w}$ curved module $\M$.
\end{lem}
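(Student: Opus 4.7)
The plan is to show that $\Bar_{\tilde w}$ is a flat resolution of $\Delta_\ast S(\O_X)$ in $Z^0\Qcoh(X \times X, \O_{X \times X}, \tilde w)$, so that the underived tensor product on the left-hand side of the stated isomorphism computes the derived tensor product on the right.

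First I would show that $\epsilon : \Bar_{\tilde w} \to \Delta_\ast S(\O_X)$ is a weak equivalence in $D^{co}\Qcoh(X \times X, \tilde w)$. The cone of $\epsilon$ can be identified, up to shift, with the total curved module of the sequence
\[\cdots \to \O_{\mathfrak X^3} \to \O_{\mathfrak X^2} \to \Delta_\ast S(\O_X) \to 0,\]
where the $B_w$-differential extends trivially over $\Delta_\ast S(\O_X)$ since $\tilde w$ vanishes on the diagonal. With only the $b$-differential, this sequence is locally exact: it is (a 2-periodicization of) the classical completed two-sided bar resolution of $\Delta_\ast \O_X$, with contracting homotopy given by insertion of $1$. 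Following the template of the corollary preceding Corollary \ref{cor:Coderived}, I would write this total as the homotopy colimit over $n$ of the totals of its finite left truncations, each of which lies in $Coac$ by Remark \ref{rmk:resolutions}. Since $Coac$ is closed under homotopy colimits, the cone is coacyclic, and $\epsilon$ is a weak equivalence.

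Second, I would verify that the graded components of $\Bar_{\tilde w}$ are flat over $S(\O_{X \times X})^\#$; this reduces to flatness of $\O_{\mathfrak X^n}$ over $\O_{X \times X}$, which holds because adic completion is flat for Noetherian rings. Consequently $\Bar_{\tilde w} \tens{S(\O_{X \times X})} -$ is an exact functor on the underlying graded categories, and so sends coacyclic $(-\tilde w)$-curved modules to acyclic complexes of sheaves.

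To conclude, pick a componentwise-flat resolution $P \to \M$ in $Z^0 \Qcoh(-\tilde w)$, for example by applying the $+$ functor of Section \ref{section:MF} to a flat resolution of $\M^\#$. By termwise flatness of $\Bar_{\tilde w}$ together with the second step, the map $\Bar_{\tilde w} \otimes P \to \Bar_{\tilde w} \otimes \M$ is a quasi-isomorphism. By the weak equivalence $\epsilon$ tensored with the termwise-flat object $P$, the map $\Bar_{\tilde w} \otimes P \to \Delta_\ast S(\O_X) \otimes P$ is also a quasi-isomorphism. Since $\Delta_\ast S(\O_X) \otimes P \cong \Delta_\ast S(\O_X) \ltens{S(\O_{X \times X})} \M$ by definition of the derived tensor product, chasing these identifications in $D(X)$ yields the claim. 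The main obstacle I anticipate is the first step: promoting the local exactness of the classical bar complex, which is a statement about ordinary complexes, to a coacyclicity statement in the curved setting, and in particular checking that $B_w$ is consistent with the truncation and homotopy colimit argument.
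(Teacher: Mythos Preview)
Your skeleton matches the paper's---flatness of the graded pieces of $\Bar_{\tilde w}$, reduction to flat $\M$ via a finite flat replacement---but your first step is a genuine gap, and it is exactly the one you flagged.

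You propose to show $\mathrm{Cone}(\epsilon)$ is coacyclic by writing it as a homotopy colimit of totals of finite truncations of the augmented bar sequence and invoking Remark~\ref{rmk:resolutions}. This does not go through. The sequence $\cdots \to \O_{\mathfrak X^3}\to \O_{\mathfrak X^2}\to \Delta_\ast\O_X$ is exact as a sequence of \emph{sheaves}, not as a sequence in $Z^0\Qcoh(\tilde w)$: the individual $\O_{\mathfrak X^k}$ carry no curved differential, and the extra $B_w$ is precisely what prevents the folded object $\Bar_{\tilde w}$ from being the Tot of any exact sequence of curved modules. So Remark~\ref{rmk:resolutions} and the corollary preceding Corollary~\ref{cor:Coderived} simply do not apply. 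Even if one could set it up, that corollary treats \emph{right} resolutions $\E\to\I^\bullet$, where the direct-sum Tot is an increasing union of canonical truncations; for a left-infinite resolution the brutal truncations are not exact and the canonical ones do not assemble into an increasing system in the same way. In fact, in the paper the weak equivalence of $\epsilon$ is established in the \emph{next} lemma as a \emph{consequence} of this one (via a compact-generator argument), so the logical order is the reverse of what you attempt.

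The paper instead proves directly that $\W\tens{S(\O_{X\times X})}\M$ is acyclic when $\M$ is graded-flat. It displays $\W\otimes\M$ as the total complex of a ``bicomplex'' whose rows are copies of the classical bar resolution tensored with a graded piece of $\M$ (hence exact, by flatness of $\M$), with the $B_w$- and $d_\M$-contributions running in other directions. Filtering by rows gives a filtration that is bounded below and exhaustive on each total degree, so the associated spectral sequence converges, and it already has $E_1=0$. This is what replaces your step one: it turns the local contractibility of the bar complex into honest acyclicity without ever asserting that $\W$ itself is coacyclic. The passage from general $\M$ to flat $\M$ then proceeds as you outline, using a \emph{finite} flat resolution (available since $X$ is smooth) so that the relevant cone is the Tot of a bicomplex with exact columns and uniformly bounded rows.
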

\begin{proof} We let $\mathcal W$ be the cone of the morphism $\epsilon:\Bar_{\tilde{w}}(X)\to \Delta_\ast S(\O_X)$. Then
\[\mathcal W_n=\bigoplus_{k\equiv n\mod 2} \O_{\mathfrak X^k}\]
where we consider $\mathfrak X^1=\Delta$. 

Consider first the case when $\M$ is graded-flat. The $n$-th graded component of the complex $\mathcal W\tens{S(\O_{X\times X})} \M$ is
\begin{align*}
(\mathcal W\tens{S(\O_{X\times X})} \M)_n&=\bigoplus_{k\equiv n\mod 2}\O_{\mathfrak X^k}\otimes \M_0\oplus \bigoplus_{k\equiv n+1\mod 2}\O_{\mathfrak X^k}\otimes \M_1\\
&=\bigoplus_{k}\O_{\mathfrak X^k}\otimes \M_{n-k}\\
\end{align*} 
Taking the differential into account, may view $\mathcal W\tens{S(\O_{X\times X})} \M$ as the total complex of the ``bi-complex"
\begin{center}
 \begin{tikzpicture}[
cross line/.style={preaction={draw=white, -,
line width=6pt}},
descr/.style={fill=white,inner sep=2.5pt}]

 \matrix(m)[matrix of math nodes, row sep=2em, column sep=3em]
 {{~}&\vdots&\vdots&\vdots&\vdots\\
 \dots&\O_{\mathfrak X^4}\otimes \M_1&\O_{\mathfrak X^3}\otimes \M_1&\O_{\mathfrak X^2}\otimes \M_1&\O_{\Delta}\otimes \M_1\\
 \dots&\O_{\mathfrak X^4}\otimes \M_0&\O_{\mathfrak X^3}\otimes \M_0&\O_{\mathfrak X^2}\otimes \M_0&\O_{\Delta}\otimes \M_0\\
\dots&\O_{\mathfrak X^4}\otimes \M_1&\O_{\mathfrak X^3}\otimes \M_1&\O_{\mathfrak X^2}\otimes \M_1 &\O_{\Delta}\otimes \M_1\\
 \dots&\O_{\mathfrak X^4}\otimes \M_0&\O_{\mathfrak X^3}\otimes \M_0&\O_{\mathfrak X^2}\otimes \M_0&\O_{\Delta}\otimes \M_0\\
 \dots&\O_{\mathfrak X^4}\otimes \M_1&\O_{\mathfrak X^3}\otimes \M_1&\O_{\mathfrak X^2}\otimes \M_1&\O_{\Delta}\otimes \M_1\\
{}&\vdots&\vdots&\vdots&\vdots\\};

\path[->]
(m-2-1) edge[cross line] node[auto]{} (m-2-2)
(m-2-2) edge[cross line] node[auto]{} (m-2-3)
(m-2-3) edge[cross line] node[auto]{} (m-2-4)
(m-3-1) edge[cross line] node[auto]{} (m-3-2)
(m-3-2) edge[cross line] node[auto]{} (m-3-3)
(m-3-3) edge[cross line] node[auto]{} (m-3-4)
(m-4-1) edge[cross line] node[auto]{} (m-4-2)
(m-4-2) edge[cross line] node[auto]{} (m-4-3)
(m-4-3) edge[cross line] node[auto]{} (m-4-4)
(m-5-1) edge[cross line] node[auto]{} (m-5-2)
(m-5-2) edge[cross line] node[auto]{} (m-5-3)
(m-5-3) edge[cross line] node[auto]{} (m-5-4)
(m-6-1) edge[cross line] node[auto]{} (m-6-2)
(m-6-2) edge[cross line] node[auto]{} (m-6-3)
(m-6-3) edge[cross line] node[auto]{} (m-6-4)
(m-2-4) edge node[auto]{$\epsilon$} (m-2-5)
(m-3-4) edge node[auto]{$\epsilon$} (m-3-5)
(m-4-4) edge node[auto]{$\epsilon$} (m-4-5)
(m-5-4) edge node[auto]{$\epsilon$} (m-5-5)
(m-6-4) edge node[auto]{$\epsilon$} (m-6-5)
(m-1-2) edge node[auto]{} (m-1-2)
(m-3-2) edge node[auto]{} (m-2-2)
(m-4-2) edge node[auto]{} (m-3-2)
(m-5-2) edge node[auto]{} (m-4-2)
(m-6-2) edge node[auto]{} (m-5-2)
(m-7-2) edge node[auto]{} (m-6-2)
(m-2-3) edge node[auto]{} (m-1-3)
(m-3-3) edge node[auto]{} (m-2-3)
(m-4-3) edge node[auto]{} (m-3-3)
(m-5-3) edge node[auto]{} (m-4-3)
(m-6-3) edge node[auto]{} (m-5-3)
(m-7-3) edge node[auto]{} (m-6-3)
(m-2-4) edge node[auto]{} (m-1-4)
(m-3-4) edge node[auto]{} (m-2-4)
(m-4-4) edge node[auto]{} (m-3-4)
(m-5-4) edge node[auto]{} (m-4-4)
(m-6-4) edge node[auto]{} (m-5-4)
(m-7-4) edge node[auto]{} (m-6-4)
(m-2-5) edge node[auto]{} (m-1-5)
(m-3-5) edge node[auto]{} (m-2-5)
(m-4-5) edge node[auto]{} (m-3-5)
(m-5-5) edge node[auto]{} (m-4-5)
(m-6-5) edge node[auto]{} (m-5-5)
(m-7-5) edge node[auto]{} (m-6-5)
(m-3-3) edge[densely dotted] node[left=16pt,above]{} (m-1-2)
(m-3-4) edge[densely dotted] node[left=16pt,above]{} (m-1-3)
(m-3-2) edge[densely dotted] node[left=16pt,above,sloped]{} (m-1-1)
(m-7-3) edge[densely dotted] node[left=16pt,above,sloped]{} (m-5-2)
(m-7-4) edge[densely dotted] node[left=16pt,above,sloped]{} (m-5-3)
(m-7-2) edge[densely dotted] node[left=16pt,above,sloped]{} (m-5-1)
(m-5-3) edge[densely dotted] node[left=16pt,above,sloped]{} (m-3-2)
(m-5-4) edge[densely dotted] node[left=16pt,above,sloped]{} (m-3-3)
(m-5-2) edge[densely dotted] node[left=16pt,above,sloped]{} (m-3-1)
(m-4-3) edge[densely dotted] node[left=16pt,above]{} (m-2-2)
(m-4-4) edge[densely dotted] node[left=16pt,above]{} (m-2-3)
(m-4-2) edge[densely dotted] node[left=16pt,above,sloped]{} (m-2-1)
(m-6-3) edge[densely dotted] node[left=16pt,above,sloped]{} (m-4-2)
(m-6-4) edge[densely dotted] node[left=16pt,above,sloped]{} (m-4-3)
(m-6-2) edge[densely dotted] node[left=16pt,above,sloped]{} (m-4-1);
\end{tikzpicture}
\end{center}
 where the horizontal maps are induced by $b$, diagonal maps induced by $B_{w}$ and the vertical maps by the differential on $\M$. This "bi-complex" is of course just a mnemonic, but it gives us insight into how to deal with the complex $\mathcal W\tens{S(\O_{X\times X})} \M$. In particular, we may ``filter the bi-complex by rows" to get a filtration on $\mathcal W\tens{S(\O_{X\times X})} \M$. One should convince oneself that this indeed a filtration by subcomplexe. This filtration is bounded below and exhaustive, therefore the associated spectral sequence converges. Already on the $E_1$ page all of the groups are 0 since the rows of the ```bi-complex" associated to $\mathcal W\tens{S(\O_{X\times X})} \M$ are exact. This gives us that the map
 \[{\Bar}_{\tilde{w}}(X)\tens{S(\O_{X\times X})} \M\to \Delta_\ast S(\O_X)\otimes_{S(\O_{X\times X})} \M\]
 is a quasi-isomorphism.
 
 Now for general $\mathcal M$, let $\underline{\M}=\Tot(\mathcal F^{\bullet})$ be a flat replacement of $\M$, where $\F^{\bullet}$ is a (finite) resolution of $\M$ by flat $-\tilde{w}$-curved modules. This can be done as in corollary \ref{cor:gens}; finiteness is possible since $X$ is smooth. It is well known (see for example \cite{Yek}) that $(p_{1,k})_\ast{\O_{\mathfrak X^k}}$ is flat as an $\O_{X^2}$-module and therefore the graded components of  $\Bar_{\tilde{w}}$ are flat. This implies the morphism 
\[\Bar_{\tilde{w}}\otimes \underline{\M}\to\Bar_{\tilde{w}}\otimes \M\]
is a quasi-isomorphism. 

We have
\[\Bar_{\tilde{w}}\tens{S(\O_{X\times X})} \underline{\M}=\Bar_{\tilde{w}}\tens{S(\O_{X\times X})} \Tot(\F^{\bullet})=\Tot(\Bar_{\tilde{w}}\tens{S(\O_{X\times X})} \F^{\bullet})\]
The cone of the morphism 
\[\Tot(\Bar_{\tilde{w}}\otimes \F^{\bullet})\to \Tot(\Delta_\ast S(\O_X)\otimes \F^{\bullet})\]
is given by $\Tot(\W\otimes \F^{\bullet})$, which is the total complex of a bicomplex with exact columns (by the above argument) and uniformly bounded rows and therefore is acyclic. \\

Therefore we obtain a zig-zag of quasi-isomorphisms
\[\Bar_{\tilde{w}}\tens{S(\O_{X\times X})}  \M\leftarrow \Bar_{\tilde{w}}\tens{S(\O_{X\times X})} \underline{\M}\to  \Delta_\ast(S(\O_X))\tens{S(\O_{X\times X})} \underline{\M}\]
Since $\Delta_\ast(S(O_X))\tens{S(\O_{X\times X})} \underline{\M}$ computes $\Delta_{\ast} S(\O_X)\ltens{S(\O_{X\times X})} \M$, we are done. 
\end{proof}

\begin{lem} The map $\Bar_{\tilde w}\to \Delta_\ast S(\O_X)$ is a weak equivalence in $Z^0\Qcoh(X,\O_{X\times X},\tilde{w})$
\end{lem}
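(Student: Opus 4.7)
The plan is to show that the cone $\W$ of $\epsilon$, whose explicit form appears in the proof of Lemma \ref{lem:deltares}, is coacyclic in $\Qcoh(X\times X,\O_{X\times X},\tilde{w})$. By Corollary \ref{cor:gens} the matrix factorizations $\MF(X\times X,\O_{X\times X},\tilde{w})$ compactly generate $D^{co}\Qcoh(X\times X,\O_{X\times X},\tilde{w})$, so it suffices to show that $\Hom_{D^{co}}(\mathcal E,\W[n])=0$ for every matrix factorization $\mathcal E$ and every $n\in\mathbb Z$.

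For such an $\mathcal E$, the dual $\mathcal E^\vee$ is a matrix factorization with potential $-\tilde{w}$ by Proposition \ref{prop:functors}(4), and in particular is graded-flat. Applying the previous lemma with $\M=\mathcal E^\vee$, and using graded-flatness to identify $\Delta_\ast S(\O_X)\ltens{S(\O_{X\times X})}\mathcal E^\vee$ with the ordinary tensor product, one obtains a quasi-isomorphism
\[\Bar_{\tilde{w}}\tens{S(\O_{X\times X})}\mathcal E^\vee\to \Delta_\ast S(\O_X)\tens{S(\O_{X\times X})}\mathcal E^\vee\]
of $2$-periodic complexes of sheaves on $X\times X$. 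By Proposition \ref{prop:functors}(3) its cone is identified with $\sHom_{S(\O_{X\times X})}(\mathcal E,\W)$, which is therefore acyclic as a complex of sheaves.

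To finish, I would compute $\Hom_{D^{co}}(\mathcal E,\W[n])=H^n\Gamma(\sHom(\mathcal E,K(\W)))$ via Corollary \ref{cor:Coderived}, where $K$ denotes the functorial graded-injective replacement. Tensoring with the graded-flat $\mathcal E^\vee$ preserves short exact sequences and direct sums, hence preserves coacyclics, so the map $\sHom(\mathcal E,\W)\to\sHom(\mathcal E,K(\W))$ is a weak equivalence of $0$-curved modules. Since every coacyclic $0$-curved module is acyclic as a complex of sheaves (totals of short exact sequences are acyclic and acyclicity is preserved by direct sums and triangles), this yields the acyclicity of $\sHom(\mathcal E,K(\W))$. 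Its graded components are tensor products of the vector bundle $\mathcal E^\vee$ with injective quasi-coherent sheaves, hence are $\Gamma$-acyclic. The main obstacle I anticipate is this last step: converting sheaf-level acyclicity of the unbounded $2$-periodic complex $\sHom(\mathcal E,K(\W))$ into acyclicity of its global sections is not automatic for unbounded complexes, but follows from the finite cohomological dimension of the smooth Noetherian scheme $X\times X$, which guarantees convergence of the relevant hypercohomology spectral sequence and gives the desired vanishing.
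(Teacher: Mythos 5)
Your proposal is correct and follows essentially the same route as the paper's own proof: reduce to showing the cone $\W$ is killed by the (matrix factorization) compact generators, identify $\sHom(\E,\W)\cong\E^\vee\otimes\W$ and use Lemma \ref{lem:deltares} to get sheaf-level acyclicity, pass to the graded-injective replacement $K(\W)$, and use finite cohomological dimension to conclude that global sections of the unbounded acyclic complex of injectives remain acyclic. The only differences are cosmetic (the paper works with a single compact generator and handles the unboundedness by showing the kernels/cokernels are injective and truncating, rather than via dimension shifting/spectral sequence), so no further comment is needed.
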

\begin{proof} Again we use $\mathcal W$ for the cone of the map $\Bar_{\tilde w} \to \Delta_\ast S(\O_X)$. Let $\mathcal G$ be a compact generator for $\Qcoh(X\times X,\O_{X\times X},\tilde{w})$ and by \ref{cor:gens} we can take $\mathcal G$ to be a matrix factorization. By the previous lemma \[\mathcal G^{\vee}\tens{S(\O_{X\times X})} \mathcal W=\sHom_{S(\O_{X\times X})}(\mathcal G,\mathcal W)\] is acyclic, and since $\mathcal G$ is locally free we have a quasi-isomorphism 
\[\sHom_{S(\O_{X\times X})}(\mathcal G,\mathcal W)\cong \sHom_{S(\O_{X\times X})}(\mathcal G,K(\mathcal W)),\]
where $K: \Qcoh(X\times ,\O_{X\times X},\tilde{w})\to Inj(X\times X,\O_{X\times X},\tilde{w})$ is our chosen functorial injective replacement. By adjunction, the complex of sheaves $\sHom_{S(\O_X)}(\mathcal G,K(\mathcal W))$ has injective graded components. We are want to say that that having injective graded components is sufficient for $\sHom(\mathcal G,K(\mathcal W))$ to be adapted to the global sections functor, if we could the proof would be done. However this complex is unbounded in both directions so care must be taken. 

Since $X$ is smooth thus has finite homological dimension, each of the cokernels of the differentials are injective. Then by exactness, the kernels of the differentials are also injective. Using these facts one can easily verify directly that the global sections functor is exact by checking at any particular spot and truncating appropriately, so that the truncated sequence is a bounded exact sequence of injective sheaves. Finally we can conclude that the complex of vector spaces 
\[\Hom(\mathcal G, K(\mathcal W))=\Gamma(\sHom(\mathcal G, K(\mathcal W)))\]
is exact. Since $\mathcal G$ is a generator this implies that $K(\mathcal W)$ is coacyclic and therefore $\mathcal W$ is as well.  
\end{proof} 

\begin{thm} The Hochschild homology of $\MF(X,\O_X,w)$ is $\mathbb R\Gamma(\Omega_{dw})$, where $\Omega_{dw}$ is the two periodic complex of sheaves
\begin{center}
\begin{tikzpicture}
\matrix(m)[matrix of math nodes, row sep=3em, column sep =3em]{%
\dots&\ds\bigoplus_{i~\mathrm{odd}}\Omega^i&\ds\bigoplus_{i~\mathrm{even}}\Omega^i&\ds\bigoplus_{i~\mathrm{odd}}\Omega^i&\ds\bigoplus_{i~\mathrm{even}}\Omega^i&\dots\\
};

\path[->]
(m-1-1) edge node[auto]{$dw\wedge$} (m-1-2)
(m-1-2) edge node[auto]{$dw\wedge$} (m-1-3)
(m-1-3) edge node[auto]{$dw\wedge$} (m-1-4)
(m-1-4) edge node[auto]{$dw\wedge$} (m-1-5)
(m-1-5) edge node[auto]{$dw\wedge$} (m-1-6);

\end{tikzpicture}
\end{center}
with $\displaystyle \bigoplus_{i~\mathrm{even}}\Omega^i$ in even degrees.
\end{thm}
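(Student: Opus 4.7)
The plan is to combine Lemma \ref{lem:trace}, the bar resolution developed in the preceding lemmas, and a curved Hochschild--Kostant--Rosenberg identification. By Lemma \ref{lem:trace} together with the formula $HH(\C)=Tr(\Delta)$, the Hochschild homology of $\MF(X,\O_X,w)$ is quasi-isomorphic to $\mathbb R\Gamma\bigl(\mathbb L\Delta^\ast(\Delta_\ast S(\O_X))\bigr)$, where $\Delta:X\to X\times X$ is the diagonal and $\Delta_\ast S(\O_X)$ is the diagonal curved module in $\Qcoh(X\times X,\O_{X\times X},\tilde{w})$.

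Second, I would replace $\Delta_\ast S(\O_X)$ by the curved bar resolution $\Bar_{\tilde{w}}$. Lemma \ref{lem:deltares} and the unnamed lemma immediately following it say that $\Bar_{\tilde{w}}\to \Delta_\ast S(\O_X)$ is a weak equivalence, while the proof of Lemma \ref{lem:deltares} records that the graded components $(p_{1,k})_\ast\O_{\mathfrak X^k}$ are flat over $\O_{X\times X}$. Consequently $\mathbb L\Delta^\ast(\Delta_\ast S(\O_X))\cong \Delta^\ast\Bar_{\tilde{w}}$, and since $\Delta^\ast\tilde{w}=w-w=0$ this is an honest (uncurved) complex of quasi-coherent $\O_X$-modules. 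The computation of HH therefore reduces to producing a natural quasi-isomorphism $\Delta^\ast\Bar_{\tilde{w}}\xrightarrow{\sim}\Omega_{dw}$ of two-periodic complexes of $\O_X$-modules.

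The heart of the argument --- and the main obstacle --- is this last identification, a curved sheafy HKR theorem. The graded component $\Delta^\ast\Bar_{-q}$ is $\O_X\otimes_{\O_{X\times X}}\O_{\mathfrak X^{q+2}}$, the standard local model for Hochschild $q$-chains of $\O_X$. The sheafified antisymmetrization map
\[a_0\boxtimes\cdots\boxtimes a_{q+1}\longmapsto \tfrac{1}{q!}\,a_0 a_{q+1}\,da_1\wedge\cdots\wedge da_q\]
produces a morphism $\Delta^\ast\Bar_{-q}\to \Omega^q_X$, and by smoothness of $X$ together with perfectness of $k$ the classical HKR theorem, applied to the formal neighborhoods $\mathfrak X^k$ as in the work of Yekutieli, upgrades it to a quasi-isomorphism sending the bar differential $b$ to zero.

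The genuinely new input beyond the classical statement is identifying the image of $B_w$. I would compute directly: applying the antisymmetrization map to
\[\sum_{i=0}^{q}(-1)^{i}a_0\boxtimes\cdots\boxtimes a_i\boxtimes w\boxtimes a_{i+1}\boxtimes\cdots\boxtimes a_{q+1}\]
and using the Leibniz rule together with the alternating nature of the wedge product, each insertion position for $w$ contributes a copy of $dw\wedge a_0 a_{q+1}\,da_1\wedge\cdots\wedge da_q$ up to an explicit sign; the sum over positions combines with the corrected factorial to produce precisely $dw\wedge$ applied to the antisymmetrized image of the original chain. Folding $(\Omega^\bullet_X,dw\wedge)$ two-periodically to match the $\mathbb Z/2$-grading of $\Bar_{\tilde{w}}$ yields $\Omega_{dw}$, and applying $\mathbb R\Gamma$ finishes the proof. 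The principal difficulty is the sign and combinatorial bookkeeping in this curved HKR step, which is however purely local and ultimately reduces to the already understood affine statement.
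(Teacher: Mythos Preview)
Your proposal is correct and follows essentially the same route as the paper: reduce to $\mathbb R\Gamma(\mathbb L\Delta^\ast\Delta_\ast S(\O_X))$ via Lemmas \ref{lem:exteriorProduct} and \ref{lem:trace}, replace by $\Delta^\ast\Bar_{\tilde w}$ using Lemma \ref{lem:deltares}, apply the sheafified HKR map along the rows of the resulting bicomplex, and verify by the same direct computation that $B_w$ corresponds to $dw\wedge$. The only cosmetic difference is that the paper organizes the last step explicitly as a map of bicomplexes before totalizing, whereas you phrase it directly on the folded complex; the content is identical.
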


\begin{proof} By lemmas \ref{lem:trace} and \ref{lem:exteriorProduct} we compute the hochschild homology of $\MF(X,\O_X,w)$ as $\mathbb R \Gamma(\mathbb L\Delta^\ast \Delta_\ast S(\O_X))$. By lemma \ref{lem:deltares}, we may compute $\mathbb L\Delta^\ast\Delta_{\ast}S(O_X)$ as 
$\Delta^{\ast}\Bar_{\tilde{w}}$. Now,
$\Delta^{\ast}\Bar_{\tilde{w}}$ is given as the total complex of the ``bi-complex"
\begin{equation}\label{eqn:HHbicomp}
 \begin{tikzpicture}[
cross line/.style={preaction={draw=white, -,
line width=6pt}},
descr/.style={fill=white,inner sep=2.5pt}]

 \matrix(m)[matrix of math nodes, row sep=2em, column sep=3em]
 {{~}&\vdots&\vdots&\vdots\\
 \dots&0&0&0\\
 \dots&\Delta^\ast\O_{\mathfrak X^4}&\Delta^\ast\O_{\mathfrak X^3}&\Delta^\ast\O_{\mathfrak X^2}\\
 \dots&0&0&0\\
 \dots&\Delta^\ast\O_{\mathfrak X^4}&\Delta^\ast\O_{\mathfrak X^3}&\Delta^\ast\O_{\mathfrak X^2}\\  \dots&0&0&0\\
 \dots&\Delta^\ast\O_{\mathfrak X^4}&\Delta^\ast\O_{\mathfrak X^3}&\Delta^\ast\O_{\mathfrak X^2}\\
{}&\vdots&\vdots&\vdots\\};

\path[->]
(m-2-1) edge[densely dotted] node[auto]{} (m-2-2)
(m-2-2) edge[densely dotted] node[auto]{} (m-2-3)
(m-2-3) edge[densely dotted] node[auto]{} (m-2-4)
(m-3-1) edge node[auto]{$b$} (m-3-2)
(m-3-2) edge node[auto]{$b$} (m-3-3)
(m-3-3) edge node[auto]{$b$} (m-3-4)
(m-4-1) edge[densely dotted] node[auto]{} (m-4-2)
(m-4-2) edge[densely dotted] node[auto]{} (m-4-3)
(m-4-3) edge[densely dotted] node[auto]{} (m-4-4)
(m-5-1) edge node[auto]{$b$} (m-5-2)
(m-5-2) edge node[auto]{$b$} (m-5-3)
(m-5-3) edge node[auto]{$b$} (m-5-4)
(m-6-1) edge[densely dotted] node[auto]{} (m-6-2)
(m-6-2) edge[densely dotted] node[auto]{} (m-6-3)
(m-6-3) edge[densely dotted] node[auto]{} (m-6-4)
(m-3-3) edge[cross line] node[left=16pt,above,sloped]{$B_{{w}}$} (m-1-2)
(m-3-4) edge[cross line] node[left=16pt,above,sloped]{$B_{{w}}$} (m-1-3)
(m-3-2) edge[cross line] node[left=16pt,above,sloped]{$B_{{w}}$} (m-1-1)
(m-7-3) edge[cross line] node[left=16pt,above,sloped]{$B_{{w}}$} (m-5-2)
(m-7-4) edge[cross line] node[left=16pt,above,sloped]{$B_{{w}}$} (m-5-3)
(m-7-2) edge[cross line] node[left=16pt,above,sloped]{$B_{{w}}$} (m-5-1)
(m-5-3) edge[cross line] node[left=16pt,above,sloped]{$B_{{w}}$} (m-3-2)
(m-5-4) edge[cross line] node[left=16pt,above,sloped]{$B_{{w}}$} (m-3-3)
(m-5-2) edge[cross line] node[left=16pt,above,sloped]{$B_{{w}}$} (m-3-1);
\end{tikzpicture}
\end{equation}

Applying the Hochshild-Kostant-Rosenburg (HKR) quasi-isomorphism, which is given locally by
\[a_0\boxtimes \dots \boxtimes a_q\mapsto \frac{1}{q!} a_0a_{q}da_1\wedge\dots \wedge da_{q-1}\] (see \cite{Yek} theorem 4.8) along the rows we obtain a quasi-isomorphism between \eqref{eqn:HHbicomp} and the bicomplex
\begin{equation} \label{eqn:HHbicomp2}
\begin{tikzpicture}[
cross line/.style={preaction={draw=white, -,
line width=6pt}},
descr/.style={fill=white,inner sep=2.5pt}]

 \matrix(m)[matrix of math nodes, row sep=2em, column sep=3em]
 {{~}&\vdots&\vdots&\vdots\\
 \dots&0&0&0\\
 \dots&\Omega^2&\Omega&\O_X\\
 \dots&0&0&0\\
 \dots&\Omega^2&\Omega&\O_X\\
   \dots&0&0&0\\
\dots&\Omega^2&\Omega&\O_X\\
{}&\vdots&\vdots&\vdots\\};

\path[->]
(m-2-1) edge[densely dotted] node[auto]{} (m-2-2)
(m-2-2) edge[densely dotted] node[auto]{} (m-2-3)
(m-2-3) edge[densely dotted] node[auto]{} (m-2-4)
(m-3-1) edge node[auto]{$0$} (m-3-2)
(m-3-2) edge node[auto]{$0$} (m-3-3)
(m-3-3) edge node[auto]{$0$} (m-3-4)
(m-4-1) edge[densely dotted] node[auto]{} (m-4-2)
(m-4-2) edge[densely dotted] node[auto]{} (m-4-3)
(m-4-3) edge[densely dotted] node[auto]{} (m-4-4)
(m-5-1) edge node[auto]{$0$} (m-5-2)
(m-5-2) edge node[auto]{$0$} (m-5-3)
(m-5-3) edge node[auto]{$0$} (m-5-4)
(m-7-1) edge node[auto]{$0$} (m-7-2)
(m-7-2) edge node[auto]{$0$} (m-7-3)
(m-7-3) edge node[auto]{$0$} (m-7-4)
(m-6-1) edge[densely dotted] node[auto]{} (m-6-2)
(m-6-2) edge[densely dotted] node[auto]{} (m-6-3)
(m-6-3) edge[densely dotted] node[auto]{} (m-6-4)
(m-3-3) edge[cross line] node[left=16pt,above,sloped]{$\wedge dw$} (m-1-2)
(m-3-4) edge[cross line] node[left=16pt,above,sloped]{$\wedge dw$} (m-1-3)
(m-3-2) edge[cross line] node[left=16pt,above,sloped]{$\wedge dw$} (m-1-1)
(m-7-3) edge[cross line] node[left=16pt,above,sloped]{$\wedge dw$} (m-5-2)
(m-7-4) edge[cross line] node[left=16pt,above,sloped]{$\wedge dw$} (m-5-3)
(m-7-2) edge[cross line] node[left=16pt,above,sloped]{$\wedge dw$} (m-5-1)
(m-5-3) edge[cross line] node[left=16pt,above,sloped]{$\wedge dw$} (m-3-2)
(m-5-4) edge[cross line] node[left=16pt,above,sloped]{$\wedge dw$} (m-3-3)
(m-5-2) edge[cross line] node[left=16pt,above,sloped]{$\wedge dw$} (m-3-1);
\end{tikzpicture}
\end{equation}
Under the HKR quasi-isomorphism the map $B_w$ does indeed become $\wedge dw$: 
locally we have 
\[dw \wedge HKR((1\boxtimes a_1\boxtimes a_2\dots \boxtimes a_q\boxtimes 1)\boxtimes 1)= \frac{1}{q!}dw\wedge da_1\wedge \dots \wedge da_q\]
and 
\begin{footnotesize}
\begin{align*}
HKR(B_{w}(1\boxtimes a_1\boxtimes\dots\boxtimes a_q\boxtimes 1))&=\frac{1}{q+1!}\sum_{i=0}^{k+1}(-1)^{i}da_1\wedge \dots \wedge da_i\wedge dw\wedge da_{i+1}\wedge \dots \wedge da_q\\
&=\frac{1}{q!}dw\wedge da_1\wedge\dots \wedge da_q.\end{align*}
\end{footnotesize}

This gives that the Hochschild homology of $\MF_{loc}(X,\O_X,w)$ is given as the hypercohomology of the complex
\[\dots \stackrel{\wedge dw}{\to} \bigoplus_{i \mbox{ even}} \Omega^i\stackrel{dw\wedge}{\to}
\bigoplus_{i \mbox{ odd}} \Omega^i\stackrel{dw\wedge}{\to}
\bigoplus_{i \mbox{ even}} \Omega^i\stackrel{dw\wedge}{\to}
\bigoplus_{i \mbox{ odd}} \Omega^i\stackrel{dw\wedge}{\to}
\dots\]
where $\ds\bigoplus_{i \mbox{ even}} \Omega^i$ is in even degrees and $dw\wedge$ wedges $dw$ in the first slot. 
\end{proof}

\section{Boundary-bulk and Chern Character}\label{section:atiyah}
Let us recall from \cite{PV:HRR} the definition of the {\em boundary-bulk map}
\[\tau_\E:\Hom_{\MF_{loc}(w)}(\E,\E)\to \mathrm{HH}(\MF_{loc}(X,\L,w)).\]
We have a natural quasi-isomorphism 
\[\Hom_{\MF_{loc}(w)}(\E,\E)\cong Tr(\E\boxtimes \E^{\vee})\]
Then we apply the trace functor the evaluation map $\E\boxtimes \E^{\vee}\to \Delta_\ast S(\O_X)$ to get a map 
\[\tau_\E: \Hom_{\MF_{loc}(X,\L,w)}(\E,\E)\cong Tr(\E\boxtimes \E^{\vee})\to Tr(\Delta_\ast S(\O_X))=\mathrm{HH}(\MF_{loc}(X,\L,w)).\]
Of course, this construction works for any DG-category, we refer the reader to {\em loc. cit.} for the details and also for the proof of the fact that the Chern Character in the sense of \cite{Shkly} is $\tau_\E(id)$. 

Now, having computed the Hochschild homology for the category $\MF_{loc}(X,\O_X,w)$ as $\mathbb R\Gamma(\Omega_{dw})$ and now making the trivial observation that since $\E$ is locally free we have 
\[\RHom(\E,\F)=\mathbb R\Gamma \sHom(\E,\F),\]
one is want to promote the boundary bulk-map to a map in the derived category of sheaves on $X$:
\[\mathcal T_\E:\sHom(\E,\E)\to \Omega_{dw},\]
and thereby understand the particular invariants we wish to compute in two steps, first to get an explicit representative for $\mathcal T_\E$ and then to understand the more classical problem of deducing the induced map on cohomology. 

\begin{lem} \label{lem:sheafifiedTrace} Define a map $\mathcal T_\E:\sHom(\E,\E)\to \Omega_{dw}$ in $D(X)$ by 
\[\sHom(\E,\E)=\E\otimes \E^{\vee}\cong \mathbb L\Delta^\ast(\E\boxtimes \E^{\vee})\stackrel{eval}{\to} \mathbb L\Delta^{\ast}(\Delta_\ast S(\O_X))\cong\Omega_{dw}\]
Then $\tau_\E=\mathbb R\Gamma(\mathcal T_\E)$. 
\end{lem}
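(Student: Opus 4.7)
The strategy is to chase definitions, using Lemma \ref{lem:trace} to translate the trace functor $Tr$ into $\mathbb R\Gamma\circ\mathbb L\Delta^\ast$, and then to identify the domain and target of the resulting map using Proposition \ref{prop:functors}(3) and the HKR-based identifications appearing in the proof of the Hochschild homology theorem.

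First I would unpack the definition: $\tau_\E$ is by construction the result of applying $Tr$ to the evaluation morphism $\mathrm{ev}\colon \E\boxtimes\E^\vee\to\Delta_\ast S(\O_X)$ in $D(\MF_{loc}(X\times X,\tilde w))$. Therefore it suffices to show that under the natural quasi-isomorphism $Tr \simeq \mathbb R\Gamma\mathbb L\Delta^\ast$ supplied by Lemma \ref{lem:trace}, the map $Tr(\mathrm{ev})$ becomes $\mathbb R\Gamma\bigl(\mathbb L\Delta^\ast(\mathrm{ev})\bigr)$, and that the latter, after the prescribed identifications of domain and target, equals $\mathbb R\Gamma(\mathcal T_\E)$.

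Next I would carry out these identifications. Since $\E$ has vector-bundle graded components, $\E\boxtimes\E^\vee$ is locally free on $X\times X$, so $\mathbb L\Delta^\ast$ agrees with the underived $\Delta^\ast$ and yields $\E\otimes\E^\vee$, which is $\sHom(\E,\E)$ by Proposition \ref{prop:functors}(3). The identification $\mathbb L\Delta^\ast\Delta_\ast S(\O_X)\cong\Omega_{dw}$ is precisely the one constructed in the Hochschild homology theorem: Lemma \ref{lem:deltares} replaces $\Delta_\ast S(\O_X)$ by $\Bar_{\tilde w}$, whose graded components are flat, so $\mathbb L\Delta^\ast\Delta_\ast S(\O_X)\simeq \Delta^\ast \Bar_{\tilde w}$; applying HKR along the rows of the resulting bicomplex produces $\Omega_{dw}$. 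Together these slot $\mathbb L\Delta^\ast(\mathrm{ev})$ into the composite defining $\mathcal T_\E$ in $D(X)$, and applying $\mathbb R\Gamma$ recovers the composite defining $\tau_\E$.

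The most delicate point is the naturality of the quasi-isomorphism from Lemma \ref{lem:trace}, which was established in that proof only pointwise at a compact generator. To upgrade it to an actual natural isomorphism one uses that both $Tr$ and $\mathbb R\Gamma\mathbb L\Delta^\ast$ are triangulated functors preserving arbitrary direct sums out of the compactly generated category $D(\MF_{loc}(X\times X,\tilde w))$: any natural transformation between two such functors that restricts to an isomorphism on a compact generator is automatically an isomorphism everywhere. One concrete natural transformation of the required form arises from the formula $Tr(M)=M\ltens{\MF_{loc}\otimes\MF_{loc}^{op}}\Delta$, transported along Lemma \ref{lem:exteriorProduct} into $M\ltens{X\times X}\Delta_\ast S(\O_X)$ and then compared with $\mathbb R\Gamma\,\mathbb L\Delta^\ast M$ via the derived projection formula. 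Once naturality is secured, the remaining content of the lemma is a formal diagram chase.
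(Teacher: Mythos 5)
Your proposal is correct and matches the route the paper implicitly takes; the paper's own proof is simply ``This is clear,'' and what you have written is exactly the unpacking that makes it so: substitute $Tr\simeq\mathbb R\Gamma\mathbb L\Delta^\ast$ from Lemma \ref{lem:trace}, identify $\mathbb L\Delta^\ast(\E\boxtimes\E^\vee)$ with $\sHom(\E,\E)$ via Proposition \ref{prop:functors}(3) and local freeness of $\E$, and identify $\mathbb L\Delta^\ast\Delta_\ast S(\O_X)$ with $\Omega_{dw}$ via the bar resolution and HKR as in the Hochschild homology computation. You are also right to flag that the proof of Lemma \ref{lem:trace} as written only checks agreement of the two functors on a compact generator, so an honest version needs a natural transformation to compare them (e.g.\ the one coming from the derived projection formula), which is then an isomorphism everywhere by the usual compact-generator argument; the paper elides this but your fix is the standard and correct one.
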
 
\begin{proof} This is clear.
\end{proof}

We wish now to get a better handle on this map $\mathcal T_\E$. We may resolve a matrix factorization $\E$ by $\epsilon\otimes 1:\Bar_{\tilde w}\tens{S(\O_X)} \E\to \Delta_\ast S(\O_X)\tens{S(\O_X)}\E=\E$. Here we use the short hand $\tens{S(\O_X)}$ between an $\tilde w$ curved module on $X\times X$ and a $w$-curved module on $X$ to mean
\[\Bar_{\tilde w}\tens{S(\O_X)} \E:=(p_1)_\ast(\Bar_{\tilde w}\tens{S(\O_{X^2})} p_2^\ast\E)\]
where $p_1$ and $p_2$ are the natural projections from $X\times X$ to $X$.  Since matrix factorizations are flat, lemma \ref{lem:deltares} implies that this map is a weak equivalence in $Z^0\Qcoh(w)$.
Then the map
\[\E^\vee\otimes (\Bar_{\tilde{w}}\tens{S(\O_X)} \E)\to \E^{\vee}\otimes\E=\sHom(\E,\E)\]
is a quasi-isomorphism of complexes of sheaves.

This gives us an explicit representative for $\mathcal T_\E$ given by the roof
\begin{center}
\begin{tikzpicture}
\matrix(m)[matrix of math nodes, row sep=3em, column sep=3em]
{\E^{\vee}\otimes(\hat{B}_{\tilde w}\tens{S(\O_{X})} \E) &&\E^{\vee}\otimes\E\tens{S(\O_{X})}\Bar_{\tilde w}\\
\E^{\vee}\otimes \E&{}&\Delta^\ast\Bar_{\tilde w}\\
{}&{}&\Omega_{dw}\\};

\path[->, font=\scriptsize]
(m-1-1) edge node[sloped, above=1]{$\sim$} node[auto,swap]{$1\otimes \epsilon\otimes 1$}(m-2-1)
(m-1-1) edge node[auto]{$1\otimes \sigma$} (m-1-3)
(m-1-3) edge node[auto]{$ev\overline{\otimes} 1\otimes 1$} (m-2-3)
(m-2-3) edge node[auto]{$HKR$} (m-3-3);
\end{tikzpicture}
\end{center}

where $ev$ is the evaluation map of $\E^{\vee}$ on $\E$, $\sigma$ is switching the factors in the tensor product and $\overline{\otimes}$ is contraction of tensor. 

The goal now is to construct a natural morphism 
\[ \mathcal Exp(at(\E)): \E\to \Omega_{dw}\tens{S(\O_X)} \E\] in the coderived category of $w$-curved modules, such that 
\begin{small}
\[\mathcal T_\E=str(-\circ  \mathcal Exp(at(\E)):\sHom(\E,\E)\to \sHom(\E,\Omega_{dw}\tens{S(\O_X)} \E)=\sHom(\E,\E)\tens{S(\O_X)}\Omega_{dw}\to \Omega_{dw}\]
\end{small}
This morphism will then be a sort of internal Chern Character for the category of matrix factorization. In what follows we will want to fix $n=dim(X)$. \\

Before we proceed we wish take take a motivational digression and consider the category of complexes of coherent sheaves on $X$. We will follow very closely the treatment from \cite{Markarian}. The idea is that in {\em loc. cit} Markarian constructs an internal Chern Character by exponentiating the Atiyah class map and which takes values in Hochschild homology sheaves. We wish to mimic this construction. The main technical problem, as we will see, is that there is no obvious analog to the Atiyah class for matrix factorizations. But, oddly enough, even though the class $at(\E)$ does not seem to exist, its exponential does.

 We have the exact sequence of $\O_{X^2}$ modules
\[0\to\I/\I^2\to \O_{X^2}/\I^2\to \O_{\Delta}\to 0\]
where $\I$ is the kernel of the multiplication map $\O_{X^2}\to \O_{\Delta}$. We will write $\Omega_{\Delta}$ for $\I/\I^2$ and $\J^1_\Delta$ for $\O_{X^2}/\I^2$. Given an honest complex ($d^2=0$) of sheaves, $\E$, we may ``tensor on the right" by $\E$ to get an exact sequence of $\O_X$-complexes
\begin{equation} \label{eqn:AtiyahE} 0\to \Omega^1\tens{\O_X}\E\to \J^1\otimes_{O_X} \E\to \E\to 0.\end{equation}
where for an $\O_{X^2}$ module $\M$ and an $\O_X$-module $\F$ 
\[\M\tens{\O_X} \F:=(p_1)_\ast(\M\tens{\O_{X^2}} p_2^*(\F))\]
where $p_i:X\times X\to X$ are the standard projections. The extension in \eqref{eqn:AtiyahE} gives an element of \[Ext^1(\E,\Omega^1\tens{\O_X} \E)=Hom_{D(X)}(\E, \Omega^1\tens{\O_X}\E[1]).\] This element, $at(\E):\E\to \Omega\tens{\O_X}\E[1]$, is called the Atiyah class of $\E$.

Composing the morphism $at(\E)$ with itself $i$ times and then wedging forms we obtain a map 
\[\wedge at(E)^i:\E\to \Omega^i\tens{\O_X} \E[i].\] 

Using the isomorphism $\O_{\Delta}\tens{\O_X} \Omega^1\cong \Omega_\Delta$, get a long exact sequence
\[0\to \Omega^{\otimes i-1}\tens{\O_X}\Omega_{\Delta} \to\Omega^{\otimes i-1}\tens{\O_X}\J_\Delta^1\to \dots \to \Omega^1\tens{\O_X}\J^1_\Delta\to \J^1_\Delta\to \O_{\Delta}\]

Tensoring this sequence on the right with $\E$ we get a long exact sequence
\begin{equation} \label{eqn:AtiyahnE} 0\to \Omega^{\otimes i}\tens{\O_X}\E\to \Omega^{\otimes i-1}\tens{\O_X} \J^1(\E)\to \dots \to \Omega^1\tens{\O_X}\J^1(\E)\to \J^1(\E)\to \E.
\end{equation}
Here we denote by $\J^1(\E)$ the tensor product $\J^1_\Delta\tens{\O_X} \E$. One sees easily that this exact sequence represents $\wedge at(E)^i$ as a Yoneda extension and so the map $\wedge at(\E)^i$ is given as the zig-zag
\[\E\leftarrow (\Omega^{\otimes i}\tens{\O_X}\E\to \Omega^{\otimes i-1}\tens{\O_X} \J^1(\E)\to \dots \to \Omega^1\tens{\O_X}\J^1(\E)\to \J^1(\E))\to \Omega^i\otimes \E[i]\] 
where the last map is simply projection onto the last factor followed by wedging forms.

When we try to mimic this construction for curved ($w\neq 0$) modules, the projection onto the last factor is no longer a map in the category we care about. Or more accurately the inclusion of graded $\O_X$ modules $\Omega^i\otimes \E[i]\to \Omega_{dw}\otimes \E[i]$ is not a map of curved modules, unless $i=n$ or $dw=0$. Our first observation is that we can view the exponential of the Atiyah class  as a map from the total complex of the resolution,
\[\Omega^{\otimes n}\tens{\O_X}\E\to \Omega^{\otimes n-1}\tens{\O_X} \J^1(\E)\to \dots \to \Omega^1\tens{\O_X}\J^1(\E)\to \J^1(\E)\]
of $\E$, to $\Omega^\bullet\otimes \E$, by using the various projections onto $\Omega^{\otimes i}\otimes \E$, for $i\leq n$,
where again $n=dim(X)$. The second observation is that we still can in $\MF_{loc}(X,\L,w)$  construct appropriate analogs of this resolution of $\E$. We do this now.

As with the curved bar complex we may use the resolution 
\begin{equation} \label{eqn:AtiyahE2} 0\to \Omega^{\otimes n-1}\tens{\O_X}\Omega_{\Delta}\to \Omega^{\otimes n-1}\tens{\O_X} \J_\Delta^1\to \dots \to \Omega^1\tens{\O_X}\J^1_\Delta \to \J^1_\Delta \to \O_{\Delta}
\end{equation}
to build a $\tilde{w}$ curved complex $\At$ which resolves $\Delta_\ast S(\O_X)$. Set 
\[\mathcal A_i=\begin{cases} \Omega^{\otimes i}\tens{\O_X} \J_\Delta^1 & \mbox{ if $0\leq i<n$}\\
\Omega^{\otimes (n-1)}\tens{\O_X} \Omega_{\Delta}&\mbox{ if $i=n$}\\
0& \mbox{else} \end{cases} \]
Then define the graded components of $\At$ by folding 2-periodically:
\[\At_i=\bigoplus_{j\equiv i\mod 2} \mathcal A_j.\]
 We have the differential, $m:\At_i\to \At_{i+1}$ coming from the resolution \ref{eqn:AtiyahE2} which (locally) is given by the equation 
\begin{align*} m(da_1\otimes da_2\otimes \dots\otimes da_n\otimes a_0\boxtimes a_{n+1})=a_0&a_{n+1}da_1\otimes \dots da_{n-1}\otimes a_{n}\boxtimes 1\\
&-a_0a_{n+1}da_1\otimes \dots da_{n-1}\otimes 1\boxtimes a_{n-1}.
\end{align*}
Here we have chosen indices in preparation for certain morphisms involving the curved bar complex. Again we use $\boxtimes$ to emphasize external tensor. Depending on our purposes, i.e. whether we want to emphasize or deemphasize the role of $J^1_\Delta$ in the tensor $\Omega^{\otimes q}\otimes J^1_\Delta$,  we will alternatively simply write
\[a_0da_1\otimes\dots \otimes da_{q}\boxtimes a_{q+1}=a_1\otimes \dots \otimes a_q\otimes a_0\boxtimes a_{q+1}\] 
 Coordinate free, this map $m$ is simply induced by the multiplication map $\J^1_{\Delta}\to \O_{\Delta}$ followed by the isomorphism $\Omega^{\otimes i}\tens{\O_X} \O_{\Delta}\cong \Omega^{\otimes (i-1)} \tens{\O_X} \Omega_{\Delta}$ and then the inclusion \[\Omega^{\otimes(i-1)}\otimes \Omega_{\Delta}\to \Omega^{\otimes(i-1)} \otimes \J^1.\] And, of course, on the summand 
$\mathcal A_n=\Omega^{\otimes (n-1)}\tens{\O_X} \Omega_{\Delta}$,
$m$ is simply the inclusion of $\Omega^{\otimes(n-1)}\tens{\O_X}\Omega_{\Delta}$ into $\Omega^{\otimes(n-1)}\tens{\O_X}\J_\Delta^1$. 
To curve $\At$ by $\tilde w$ we add a second differential $B_{dw}$ given by the formula

\[B_{dw}(\omega_1\otimes \dots \otimes \omega_n\otimes a_0\boxtimes a_{q+1})=\sum_{i=0}^q(-1)^i\omega_1\otimes \dots \otimes \omega_i\otimes dw\otimes \omega_{i+1}\otimes \dots \otimes\omega_q\otimes a_0\boxtimes a_{q+1}.\]

As with $\Bar_{\tilde{w}}$, we may picture $\At$ as the total complex of the bicomplex:
 \begin{center}
 \begin{tikzpicture}[
cross line/.style={preaction={draw=white, -,
line width=6pt}},
descr/.style={fill=white,inner sep=2.5pt}]

 \matrix(m)[matrix of math nodes, row sep=2em, column sep=3em]
 {{}&\vdots&\vdots&\vdots\\
 0&\dots&0&0&0&\\
 \Omega^{\otimes n}\otimes \J^1_{\Delta}&\dots&\Omega^{\otimes 2}\otimes \J^1_{\Delta}&\Omega\otimes \J^1_{\Delta}&\J^1_{\Delta}\\
 0&\dots&0&0&0\\
 \Omega^{\otimes n}\otimes \J^1_{\Delta}&\dots&\Omega^{\otimes 2}\otimes \J^1_{\Delta}&\Omega\otimes \J^1_{\Delta}&\J^1_{\Delta}\\
0&\dots&0&0&0\\
&\vdots&\vdots&\vdots\\};

\path[->]
(m-2-1) edge[densely dotted] node[auto]{} (m-2-2)
(m-2-2) edge[densely dotted] node[auto]{} (m-2-3)
(m-2-3) edge[densely dotted] node[auto]{} (m-2-4)
(m-2-4) edge[densely dotted] node[auto]{} (m-2-5)
(m-3-1) edge node[auto]{$b$} (m-3-2)
(m-3-2) edge node[auto]{$b$} (m-3-3)
(m-3-3) edge node[auto]{$b$} (m-3-4)
(m-3-4) edge node[auto]{$b$} (m-3-5)
(m-4-1) edge[densely dotted] node[auto]{} (m-4-2)
(m-4-2) edge[densely dotted] node[auto]{} (m-4-3)
(m-4-3) edge[densely dotted] node[auto]{} (m-4-4)
(m-4-4) edge[densely dotted] node[auto]{} (m-4-5)
(m-5-1) edge node[auto]{$b$} (m-5-2)
(m-5-2) edge node[auto]{$b$} (m-5-3)
(m-5-3) edge node[auto]{$b$} (m-5-4)
(m-5-4) edge node[auto]{$b$} (m-5-5)
(m-6-1) edge[densely dotted] node[auto]{} (m-6-2)
(m-6-2) edge[densely dotted] node[auto]{} (m-6-3)
(m-6-3) edge[densely dotted] node[auto]{} (m-6-4)
(m-6-4) edge[densely dotted] node[auto]{} (m-6-5)
(m-3-5) edge[cross line] node[left=16pt,above,sloped]{$B_{dw}$} (m-1-4)
(m-5-5) edge[cross line] node[left=16pt,above,sloped]{$B_{dw}$} (m-3-4)
(m-7-5) edge[cross line] node[left=16pt,above,sloped]{$B_{dw}$} (m-5-4)
(m-3-3) edge[cross line] node[left=16pt,above,sloped]{$B_{dw}$} (m-1-2)
(m-3-4) edge[cross line] node[left=16pt,above,sloped]{$B_{dw}$} (m-1-3)
(m-3-2) edge[cross line] node[left=16pt,above,sloped]{$B_{dw}$} (m-1-1)
(m-7-3) edge[cross line] node[left=16pt,above,sloped]{$B_{dw}$} (m-5-2)
(m-7-4) edge[cross line] node[left=16pt,above,sloped]{$B_{dw}$} (m-5-3)
(m-7-2) edge[cross line] node[left=16pt,above,sloped]{$B_{dw}$} (m-5-1)
(m-5-3) edge[cross line] node[left=16pt,above,sloped]{$B_{dw}$} (m-3-2)
(m-5-4) edge[cross line] node[left=16pt,above,sloped]{$B_{dw}$} (m-3-3)
(m-5-2) edge[cross line] node[left=16pt,above,sloped]{$B_{dw}$} (m-3-1);
\end{tikzpicture}
\end{center}

Now we claim that $\At$ imbued with the differential $B_{dw}-m\gamma$ is a $\tilde w$ curved module, where $\gamma$ is the grading operator with respect to forms, i.e. $\gamma|_{\Omega^q\otimes J^1_{\Delta}}=(-1)^q$. Indeed the computations
\begin{footnotesize}
\begin{align*}
m\gamma B_{dw}(a_0da_1\otimes\dots &\otimes da_q\boxtimes a_{q+1})\\
&=(-1)^{q+1}\left[\sum_{i=0}^q(-1)^im(a_0da_1\otimes\dots\otimes dw\otimes \dots \otimes da_q \boxtimes a_{q+1})\right]\\
&=(-1)^{q+1}\Bigg[(-1)^q(a_0a_{q+1}da_1\otimes \dots \otimes da_q\otimes w\boxtimes 1-a_0a_{q+1}da_1\otimes \dots da_q\otimes 1\boxtimes w)\\
&~~+\sum_{i=0}^{q-1}(-1)^ia_0a_{q+1}da_1\otimes \dots \otimes da_i\otimes dw\otimes da_{i+1}\otimes \dots \otimes da_{q-1}\otimes a_q\boxtimes 1\\
&~~-\sum_{i=0}^{q-1}(-1)^ia_0a_{q+1}da_1\otimes \dots \otimes da_i\otimes dw\otimes da_{i+1}\otimes \dots \otimes da_{q-1}\otimes 1\boxtimes a_q\Bigg]
\end{align*}
\end{footnotesize}
and 
\begin{small}
\begin{align*}
B_{dw} &m\gamma(da_1\otimes\dots \otimes da_q\otimes a_0\boxtimes a_{q+1})\\
&=(-1)^q\Bigg[\sum_{i=0}^{q-1}(-1)^ia_0a_{q+1}da_1\otimes \dots \otimes da_i\otimes dw\otimes da_{i+1}\otimes \dots \otimes da_{q-1}\otimes a_q\boxtimes 1\\
&\quad-\sum_{i=0}^{q-1}(-1)^ia_0a_{q+1}da_1\otimes \dots \otimes da_i\otimes dw\otimes da_{i+1}\otimes \dots \otimes da_{q-1}\otimes 1\boxtimes a_q\Bigg]
\end{align*}
\end{small}
show that 

\begin{align*}
(B_{dw}-m\gamma)^2&=-B_{dw}m\gamma-m\gamma B_{dw}(da_1\otimes \dots \otimes da_q\otimes a_0\boxtimes a_{q+1})\\
&=a_0a_{q+1}da_1\otimes \dots \otimes da_{q}\otimes (w\boxtimes 1- 1\boxtimes w)\\
&=da_1\otimes \dots \otimes da_{q}\otimes (a_0a_{q+1}w\boxtimes 1- a_0a_{q+1}\boxtimes w)
\end{align*}

The final observations are that $\tilde{w}$ acts on $\Omega^{\otimes q}\tens{\O_X} \J_\Delta^1$ by 
\[\tilde{w}\cdot \omega_1\otimes \dots \omega_q\otimes a_0\boxtimes a_{q+1}=w\omega_1\otimes \dots \omega_q\otimes a_0\boxtimes a_{q+1}w=\omega_1\otimes \dots \omega_q\otimes wa_0\boxtimes a_{q+1}w\]
and the difference between this action and the above computation for $(B_{dx}-m\gamma)^2$  is
\[a_0a_{q+1}w\boxtimes 1-a_0a_{q+1}\boxtimes w-wa_0\boxtimes a_q+a_0\boxtimes wq=(a_0\boxtimes 1)(w\boxtimes 1-1\boxtimes w)(a_{q+1}\boxtimes 1-1\boxtimes a_{q+1})\]
which is 0 in $\J^1_\Delta$. Therefore the map $(B_{dw}-m\gamma)^2=-B_{dw}m\gamma-m\gamma B_{dw}$ is indeed multiplication by $\tilde w$. 

Now there are maps $\pi:\O_{\mathfrak X^{q+2}}\to \Omega^{\otimes q}\tens{\O_X} \J_\Delta^1$ given by 
\[\pi(a_0\boxtimes a_1\boxtimes \dots \boxtimes a_q\boxtimes a_{q+1})=a_0da_1\otimes \dots \otimes da_q\boxtimes a_{q+1}\]
It is easy to see $\pi B_{w}=B_{dw}\pi$. We observe that for an elements of $\O_{\mathfrak X^{q+1}}$ of the form $a_0\boxtimes \dots \boxtimes a_ia_{i+1}\boxtimes \dots \boxtimes a_{q+1}$, with $0<i<q+1$ we have 
\begin{align*}
\pi(a_0\boxtimes \dots \boxtimes a_ia_{i+1}\boxtimes \dots \boxtimes a_{q+1})&=da_1\otimes \dots \otimes d(a_ia_{i+1})\otimes da_q\otimes a_0\boxtimes a_{q+1}\\
&= a_0a_{i+1}da_1\otimes \dots \otimes da_i\otimes da_{i+2}\otimes\dots \otimes da_q\boxtimes a_{q+1}\\
&\quad +a_0a_{i}da_1\otimes \dots \otimes da_{i-1}\otimes da_{i+1}\otimes\dots \otimes da_q\boxtimes a_{q+1}\\
&=a_0a_ida_1\otimes \dots \otimes \widehat{da_i}\otimes \dots \otimes da_q\boxtimes a_{q+1}\\
&\quad +a_0a_ida_1\otimes \dots \otimes \widehat{da_{i+1}}\otimes \dots \otimes da_q\boxtimes a_{q+1}\\
\end{align*}
where $\widehat{da_i}$ indicates to omit this tensor. We also have 
\begin{align*}\pi(a_0a_1\boxtimes a_2\boxtimes \dots \boxtimes a_{q+1})&=a_0a_1\otimes a_2\otimes\dots \otimes da_q\boxtimes a_{q+1}\\
&=a_0a_1\otimes \widehat{da_1}\otimes da_2\otimes \dots \otimes da_q\boxtimes a_{q+1}\\
\end{align*}
So, again using $b$ for the Hochschild differential, we have
\begin{small}
\begin{align*} 
\pi b(a_0\boxtimes \dots \boxtimes a_{q+1})&=\sum_{i=0}^{q}(-1)^i\pi(a_0\boxtimes \dots \boxtimes a_ia_{i+1}\boxtimes \dots \boxtimes a_{q+1})\\
&=(-1)^qa_0 da_1\otimes\dots\otimes da_{q-1}\boxtimes a_qa_{q+1}+a_0a_1da_2\otimes \dots\otimes da_n \boxtimes a_{n+1}\\
 &\qquad+\sum_{i=1}^{q-1}(-1)^{i}a_0a_i\otimes da_1\otimes\dots \otimes\widehat{da_i}\otimes \dots\otimes  da_q\boxtimes da_{q+1} \\
 &+\sum_{i=1}^{q-1}(-1)^{i}a_0a_{i+1}\otimes da_1\otimes\dots\otimes \widehat{da_{i+1}}\otimes \dots \otimes da_q\boxtimes da_{q+1}\\
&=(-1)^qa_0 da_1\otimes\dots\otimes da_{q-1}\boxtimes a_qa_{q+1}+a_0a_1da_2\otimes \dots\otimes da_n \boxtimes a_{n+1}\\
 &\qquad+\sum_{i=1}^{q-1}(-1)^{i}a_0a_i\otimes da_1\otimes\dots \otimes\widehat{da_i}\otimes \dots\otimes  da_q\boxtimes da_{q+1} \\
 &\qquad+\sum_{i=2}^{q}(-1)^{i-1}a_0a_{i}\otimes da_1\otimes\dots\otimes \widehat{da_{i}}\otimes \dots \otimes da_q\boxtimes da_{q+1}\\
 &=(-1)^qa_0 da_1\otimes\dots\otimes da_{q-1}\boxtimes a_qa_{q+1}+a_0a_1da_2\otimes \dots\otimes da_q \boxtimes a_{q+1}\\
 &\qquad-a_0a_1da_2\otimes\dots \otimes\widehat{da_i}\otimes \dots\otimes  da_q\boxtimes da_{q+1}\\
 &\qquad+(-1)^{q-1}a_0da_q da_1\otimes \dots \otimes da_{q-1}\boxtimes a_{q+1} \\
&=(-1)^{q+1}m \pi(a_0\boxtimes \dots \boxtimes a_{q+1})
\end{align*}
\end{small}
The above discussion proves the following lemma:
\begin{lem} \label{lem:pi} The map $\pi: \Bar_{\tilde w}\to \At$ is a closed morphism of $\tilde w$-curved modules.
\end{lem}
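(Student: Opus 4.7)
The plan is to verify the two defining conditions for $\pi$ to be a closed morphism of $\tilde w$-curved modules: first, that $\pi$ is a degree-zero $S(\O_{X\times X})^{\#}$-linear map on underlying graded objects, and second, that $\pi$ intertwines the curved differentials $b + B_w$ on $\Bar_{\tilde w}$ and $B_{dw} - m\gamma$ on $\At$. The first condition is essentially bookkeeping: $\pi$ is manifestly $\O_{X\times X}$-linear in the outer factors $a_0$ and $a_{q+1}$, and it sends the summand $\O_{\mathfrak X^{q+2}}$ of $\Bar_{\tilde w}$ into the summand $\Omega^{\otimes q}\tens{\O_X}\J_\Delta^1$ of $\At$, which lie in the same parity after the 2-periodic folding that defines both complexes.

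For the second condition, I would split the desired identity $\pi(b + B_w) = (B_{dw} - m\gamma)\pi$ into the two pieces $\pi B_w = B_{dw}\pi$ and $\pi b = -m\gamma \pi$. The first is almost tautological and is exactly what is flagged as ``easy to see'' above: inserting $w$ into the $i$-th internal slot of a simple tensor in $\Bar_{\tilde w}$ and then applying $\pi$ produces the same signed sum as applying $\pi$ and then inserting $dw$ into the $i$-th form slot, because $\pi$ only differentiates slots that are not at the endpoints.

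The substantive computation is the identity $\pi b = -m\gamma \pi$, which is carried out in detail in the discussion preceding the lemma. The mechanism is the Leibniz rule $d(a_i a_{i+1}) = a_i\, da_{i+1} + a_{i+1}\, da_i$: applied inside each internal term of the Hochschild differential, it splits that term into two pieces, and after re-indexing the two resulting families telescope against each other, leaving only the boundary contributions from $i=0$ and $i=q$. These boundary contributions are precisely the two summands in the defining formula for $m$, and a careful sign count yields $\pi b = (-1)^{q+1}m\pi$ on the $q$-th form-degree component. Since $\gamma$ acts on that component as $(-1)^q$, this is exactly $-m\gamma\pi$.

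The only real obstacle is the sign bookkeeping in the telescoping cancellation, since one must correctly match the two shifted alternating sums against the two boundary summands of $m$ while keeping track of the external Koszul sign from the 2-periodic folding. Once those signs are confirmed, adding the two identities immediately gives $\pi(b+B_w) = (B_{dw}-m\gamma)\pi$, and combined with the grading/linearity observations this shows $\pi$ is closed as a morphism of $\tilde w$-curved $S(\O_{X\times X})$-modules.
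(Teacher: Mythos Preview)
Your proposal is correct and follows essentially the same approach as the paper: the paper's proof is precisely the computation preceding the lemma statement, which verifies $\pi B_w = B_{dw}\pi$ (declared ``easy to see'') and then carries out the Leibniz-rule telescoping to obtain $\pi b = (-1)^{q+1}m\pi = -m\gamma\pi$. Your outline matches this structure and correctly identifies the sign mechanism involving the grading operator $\gamma$.
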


 Incidentally this discussion also explains the appearance of the grading operator in the horizontal direction. 

\begin{rmk} \label{rmk:piWeakEquivalence} It is clear that $\pi: \Bar_{\tilde w}\to \At$ is a weak equivalence of $\tilde w$ curved modules on $X\times X$, since both $\Bar_{\tilde w}$ and $\At$ are weakly equivalent to $\Delta_\ast S(\O_X)$ via projection.
\end{rmk}

As a piece of notation, for $\E\in \Qcoh(X,\O_X,w)$, we define 
\[\At(\E):=\At\tens{S(\O_{X})}\E:=(p_1)_\ast(\At\tens{S(\O_{X^2})}p_2^\ast\E).\]

\begin{lem}\label{lem:wedgeq} Let $\wedge: \Omega^{\otimes q}\otimes \J^1(\E)\to \Omega^q\otimes \E$ denote the anti-symmetrization map:
\[\wedge(a_0da_1\otimes\dots \otimes da_q\boxtimes e)=a_0da_1\wedge\dots \wedge da_q\otimes e\]
Then the map \[\sum_{i=0}^n \frac{\wedge}{i!}: \At(\E)\to \Omega_{dw}\tens{S(\O_X)} \E\] 
gives a closed degree 0 morphism of $w$-curved modules. 
\end{lem}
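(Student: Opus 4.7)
The plan is to verify directly that $\sum_{i=0}^n \frac{\wedge}{i!}$ is degree zero and commutes with the two curved differentials. Since the $\E$-part of each differential passes through the $\E$-slot, and the wedge map is $\O_X$-linear in that slot, the pieces coming from $d_\E$ transport tautologically. Hence the check reduces to comparing the $\At$-side differential $B_{dw}-m\gamma$ with the $\Omega_{dw}$-side differential $dw\wedge$.

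Degree zero is an immediate mod-$2$ count: an element of the summand $\Omega^{\otimes i}\tens{\O_X}\J^1(\E)$ (or $\Omega^{\otimes(n-1)}\tens{\O_X}\Omega_\Delta(\E)$ for $i=n$) with $\E$-part in degree $j$ sits in parity $i+j\pmod 2$, and the target $\Omega^i\tens{\O_X}\E_j\subset \Omega_{dw}\tens{S(\O_X)}\E$ sits in the same parity.

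The differential compatibility splits into two independent assertions. First, $\wedge\circ m\gamma = 0$: by construction $m$ factors through the inclusion $\Omega^{\otimes(q-1)}\tens{\O_X}\Omega_\Delta\hookrightarrow \Omega^{\otimes(q-1)}\tens{\O_X}\J^1_\Delta$, while $\wedge$ is defined via the multiplication map $\J^1_\Delta\to \O_\Delta$ whose kernel is exactly $\Omega_\Delta$. So the $m\gamma$ contribution to the left-hand side disappears identically, and the $m\gamma$ term is the only obstruction arising from the top cell's inclusion into $\mathcal A_{n-1}$. Second, $\bigl(\sum \wedge/i!\bigr)\circ B_{dw} = (dw\wedge)\circ\bigl(\sum \wedge/i!\bigr)$: on a typical element $a_0 da_1\otimes\cdots\otimes da_q\boxtimes e$, the operator $B_{dw}$ inserts $dw$ in each of the $q+1$ slots with alternating sign, and after wedging each insertion contributes $\frac{(-1)^i}{(q+1)!}a_0 da_1\wedge\cdots\wedge dw\wedge\cdots\wedge da_q\otimes e$; moving $dw$ past the $i$ preceding one-forms produces a second $(-1)^i$, so the two signs cancel and the $q+1$ terms coincide. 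Summing and using $(q+1)/(q+1)! = 1/q!$ yields precisely $\frac{1}{q!}\,dw\wedge a_0 da_1\wedge\cdots\wedge da_q\otimes e$, which is the right-hand side.

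The only subtlety is at the top cell $i=n$: there $\mathcal A_{n+1}=0$, so $B_{dw}$ has no target, while on the $\Omega_{dw}$-side $dw\wedge\Omega^n = 0$ because $\Omega^{n+1}=0$. Both sides vanish at the boundary, so the identity holds globally. The main obstacle is sign-bookkeeping for $B_{dw}$ and ensuring the top-degree conventions match; the key structural observation that makes the proof short is the vanishing $\wedge\circ m = 0$, which isolates $B_{dw}$ as the sole nontrivial contribution to be matched with $dw\wedge$.
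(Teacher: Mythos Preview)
Your proposal is correct and follows essentially the same approach as the paper: both proofs reduce to checking that $\wedge$ intertwines $B_{dw}$ with $dw\wedge$ (via the same sign-cancellation and $(q+1)/(q+1)!=1/q!$ count) and that $\wedge\circ m=0$, with the $\E$-differential passing through trivially. The only difference is cosmetic: you argue $\wedge\circ m=0$ structurally (the image of $m$ lands in $\Omega^{\otimes(q-1)}\otimes\Omega_\Delta$, which is the kernel of the multiplication $\J^1_\Delta\to\O_\Delta$ through which $\wedge$ factors), whereas the paper writes out the two resulting terms explicitly and observes they cancel.
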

\begin{proof} 
This follows from the calculations
\begin{footnotesize}
\begin{align*} 
\frac{\wedge}{(q+1)!}B_{dw}(a_0da_1\otimes \dots & \otimes da_q\boxtimes e)\\
&= \frac{1}{(q+1)!}\sum_{i=0}^q(-1)^ia_0da_1\wedge \dots \wedge da_i\wedge dw\wedge da_{i+1}\wedge \dots \wedge da_{q}\otimes e\\
&=\frac{1}{(q+1)!}\sum_{i=0}^qa_0dw\wedge da_1\wedge \dots \wedge da_q\otimes e\\
&=\frac{1}{q!}dw\wedge a_0da_1\wedge \dots \wedge da_q\otimes e\\
&=dw\wedge \left(\frac{\wedge}{q!}(a_0a_1\otimes \dots \otimes da_n\boxtimes e)\right)\\
\end{align*} 
\end{footnotesize}
and 
\begin{align*} 
\frac{\wedge}{(q-1)!}m(a_0da_1\otimes \dots \otimes da_q\boxtimes e)&= \frac{1}{(q-1)!}(a_0a_qda_1\wedge \dots \wedge da_{q-1}\otimes e)\\
&\quad -\frac{1}{(q-1)!}(a_0da_1\wedge \dots \wedge da_{q-1}\otimes a_qe)\\
&=0
\end{align*}
 and the observation that the differential on $\E$ obviously commutes with the map $\sum_i \wedge_i$. 
\end{proof} 

\begin{dfn} \label{dfn:expAt} Define the map $\mathcal Exp(at(\E)):\E\to \Omega_{dw}\otimes \E$ in the category $D^{co}\Qcoh(X,\O_X,w)$ by the roof \begin{center}
\begin{tikzpicture}
\matrix(m)[matrix of math nodes, row sep=2em, column sep=2em]{{}&\At(\E)&{}\\ \E&{}&\Omega_{dw}\otimes \E\\};
\path[->]
(m-1-2) edge node[sloped,below=1]{$\sim$} node[sloped, above=1]{$\pi$} (m-2-1)
(m-1-2) edge node[auto]{$\Sigma \frac{\wedge}{i!}$}(m-2-3);
\end{tikzpicture} 
\end{center}
\end{dfn}

\begin{lem}\label{lem:sheafTrace} The sheafified boundary bulk map $\mathcal T_\E:\sHom_{\S(\O_X)}(\E,\E)\to \Omega_{dw}$ is given by $str(-\circ \mathcal Exp(at(\E)))$, where $str:\sHom_{S(\O_X)}(\E,\E)\to S(\O_X)$ is the super-trace map. 
\end{lem}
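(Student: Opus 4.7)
The plan is to represent both $\mathcal{T}_\E$ and $\mathrm{str}(-\circ \mathcal{Exp}(at(\E)))$ as explicit roofs in $D(X)$ built from the two resolutions of the diagonal introduced in Sections~\ref{section:HH} and~\ref{section:atiyah}, and then to show that these two roofs coincide once one is transported through the weak equivalence $\pi:\Bar_{\tilde{w}}\to \At$ from Lemma~\ref{lem:pi} and Remark~\ref{rmk:piWeakEquivalence}.

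First I would start from the explicit representative for $\mathcal{T}_\E$ given in the paragraph following Lemma~\ref{lem:sheafifiedTrace}, namely the roof
\[\sHom(\E,\E) \xleftarrow{1\otimes\epsilon\otimes 1,\,\sim} \E^{\vee}\otimes(\Bar_{\tilde{w}}\tens{S(\O_X)}\E) \xrightarrow{1\otimes \sigma} \E^{\vee}\otimes\E\tens{S(\O_X)}\Bar_{\tilde{w}} \xrightarrow{\mathrm{ev}\,\overline{\otimes}\,1} \Delta^{*}\Bar_{\tilde{w}} \xrightarrow{\mathrm{HKR}} \Omega_{dw}.\]
By Remark~\ref{rmk:piWeakEquivalence} and the flatness of $\E$, applying $1\otimes\pi\otimes 1$ to the apex produces a weak equivalence $\E^{\vee}\otimes\Bar_{\tilde{w}}\tens{S(\O_X)}\E\to \E^{\vee}\otimes\At(\E)$, so the resulting roof still represents $\mathcal{T}_\E$ in $D(X)$.

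The key step is then a local identity showing that the composition $\mathrm{HKR}\circ(\mathrm{ev}\,\overline{\otimes}\,1)\circ(1\otimes\sigma)$ descends through $1\otimes\pi\otimes 1$ to $(\mathrm{str}\otimes 1_{\Omega_{dw}})\circ(1\otimes\sum_i\wedge/i!)$. Concretely, on a local generator $\phi\otimes(a_0\boxtimes\cdots\boxtimes a_{q+1})\otimes e$ of $\E^{\vee}\otimes\Bar_{\tilde{w}}\tens{S(\O_X)}\E$, the upper route produces $\tfrac{1}{q!}\,a_0 a_{q+1}\phi(e)\,da_1\wedge\cdots\wedge da_q$ (by applying HKR after the pairing of $\E^{\vee}$ with $\E$), while the lower route produces exactly the same expression by the formula of Lemma~\ref{lem:wedgeq} combined with the supertrace on $\E^{\vee}\otimes\E$. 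This is, in essence, the statement that HKR factors as $\pi$ followed by antisymmetrization $\sum\wedge/i!$ once the $a_{q+1}$-slot has been absorbed into $\E$ through the tensor $\tens{S(\O_X)}$.

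After this substitution the roof for $\mathcal{T}_\E$ takes the form
\[\sHom(\E,\E)\xleftarrow{\,\sim\,}\E^{\vee}\otimes\At(\E)\xrightarrow{1\otimes\sum_i\wedge/i!}\E^{\vee}\otimes\Omega_{dw}\tens{S(\O_X)}\E\xrightarrow{\mathrm{str}\otimes 1}\Omega_{dw},\]
which is exactly the description of $\mathrm{str}(-\circ\mathcal{Exp}(at(\E)))$ in terms of the Atiyah roof of Definition~\ref{dfn:expAt}. The main obstacle will be careful bookkeeping: tracking signs produced by $\sigma$ and by the graded commutativity implicit in the supertrace, and verifying that the implicit pushforward $(p_1)_\ast$ in the shorthand $\tens{S(\O_X)}$ truly identifies the action of $a_{q+1}$ (via the second projection) on $\E$ with the absorption needed for the HKR-versus-antisymmetrization identity to take effect.
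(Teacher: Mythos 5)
Your proposal is correct and follows essentially the same route as the paper: recall the explicit roof representing $\mathcal T_\E$, insert $\E^{\vee}\otimes\At(\E)$ via $1\otimes\pi\otimes 1$ (a weak equivalence by Remark~\ref{rmk:piWeakEquivalence} and flatness of $\E$), and observe that the resulting diagram commutes, identifying the roof with the one defining $\mathrm{str}(-\circ\mathcal{Exp}(at(\E)))$. The paper's proof simply draws the completed commuting diagram and asserts commutativity is ``easy to check,'' whereas you spell out the decisive local verification (that HKR factors through $\pi$ and the antisymmetrization $\sum\wedge/i!$ once $a_{q+1}$ is absorbed by the $\tens{S(\O_X)}\E$ factor) and also flag the same bookkeeping point about the supertrace swapping tensor factors that the paper mentions at the end — so you are filling in the omitted check rather than taking a different route.
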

\begin{proof} Recall from the discussion at the beginning of this section that we have the following representative for $\mathcal T_\E$  
\begin{center}
\begin{tikzpicture}
\matrix(m)[matrix of math nodes, row sep=3em, column sep=3em]
{\E^{\vee}\otimes(\hat{B}_{\tilde w}\tens{S(\O_{X})} \E) &&\E^{\vee}\otimes\E\tens{S(\O_{X})}\Bar_{\tilde w}\\
\E^{\vee}\otimes \E&{}&\Delta^\ast\Bar_{\tilde w}\\
{}&{}&\Omega_{dw}\\};

\path[->, font=\scriptsize]
(m-1-1) edge node[sloped,above=.5]{$\sim$} node[auto,swap]{$1\otimes \rho\otimes 1$}(m-2-1)
(m-1-1) edge node[auto]{$1\otimes \sigma$} (m-1-3)
(m-1-3) edge node[auto]{$ev\overline{\otimes} 1\otimes 1$} (m-2-3)
(m-2-3) edge node[auto]{$HKR$} (m-3-3);
\end{tikzpicture}
\end{center}
From lemmas \ref{lem:pi} and \ref{lem:wedgeq} we may complete this diagram to the following picture
\begin{center}
\begin{tikzpicture}
\matrix(m)[matrix of math nodes, row sep=3em, column sep=1.5em]
{\E^{\vee}\otimes(\hat{B}_{\tilde w}\tens{S(\O_{X})} \E) &&\E^{\vee}\otimes\E\tens{S(\O_{X})}\Bar_{\tilde w}\\
\E^{\vee}\otimes \E&\E^{\vee}\otimes \At(\E) &\Delta^\ast\Bar_w \\
{}&\E^{\vee}\otimes (\Omega_{dw} \otimes \E)&\Omega_{dw}\\
\sHom(\E,\E)&\sHom(\E,\Omega_{dw}\otimes \E)&\Omega_{dw}\\};

\path[->, font=\scriptsize]
(m-1-1) edge node[sloped]{$\sim$} node[auto,swap]{$1\boxtimes \rho\otimes 1$}(m-2-1)
(m-1-1) edge node[auto]{$1\boxtimes \sigma\otimes 1$} (m-1-3)
(m-2-2) edge node[auto]{$\sim$} (m-2-1)
(m-1-1) edge node[auto]{$1\boxtimes \pi \otimes 1$} (m-2-2)
(m-2-2) edge node[auto]{} (m-3-2)
(m-3-2) edge node[auto]{} (m-3-3)
(m-2-1) edge node[auto]{} (m-4-1)
(m-3-2) edge node[auto]{} (m-4-2)
(m-3-3) edge node[auto]{} (m-4-3)
(m-4-2) edge node[auto]{str} (m-4-3)
(m-1-3) edge node[auto]{$ev\overline{\otimes} 1\otimes 1$} (m-2-3)
(m-2-3) edge node[auto]{$HKR$} (m-3-3);
\end{tikzpicture}
\end{center}

It is easy to check that everything commutes, at least perhaps with the added observation that the bottom arrow, which we have by abuse simply called $str$, first commutes the tensors under the isomorphism \[\sHom(\E,\Omega_{dw}\otimes \E)\cong \sHom(\E,\E)\otimes \Omega_{dw}\] 
before applying the super-trace. 

\end{proof}

\label{rmk:sheaves}Taking $w=0$, our results give us directly information about $\mathbb Z_2$ complexes of vector bundles on $X$. \footnote{There is a mild issue here with respect to \ref{thm:cptgen} which requires that $w$ not be a zero divisor, however the conclusion of this theorem is well-know to still hold when $w=0$, so there is no problem.} Moreover, one checks (essentially by taking a standard tensor product on complexes, rather than the folded tensor we use for matrix factorizations) that all of the above constructions and theorem go through. This gives the following result, which follows formally from \cite{Cald:MukaiII} and \cite{Ramadoss}. However, there seems to be a problem in Ramadoss's proof in \cite{Ramadoss}: in the proof of Proposition 2 he uses without explaining the coincidence of the two versions of the Chern character of $\O_{\Delta}$, one defined in \cite{Cald:MukaiII} and the one one coming from DG theory.

\begin{thm} \label{thm:coincide}The DG Chern Character map for perfect complexes on smooth $X$ in the sense of \cite{Shkly} coincides with the classical Chern Character.
\end{thm}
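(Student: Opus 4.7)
The plan is to specialize the constructions of Sections \ref{section:atiyah} and \ref{section:formula} to the case $w=0$ and identify the resulting sheafified boundary-bulk map with Markarian's exponential Atiyah class, which is known from \cite{Markarian} and \cite{Cald:MukaiII} to compute the classical Chern character after HKR.

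First I would unwind the constructions when $w=0$. The category $\Qcoh(X,\O_X,0)$ is equivalent to $\mathbb Z/2$-graded complexes of quasi-coherent sheaves on $X$, and $\MF_{loc}(X,\O_X,0)$ restricts to a model for the category of perfect complexes on $X$. On $X\times X$ the doubled potential $\tilde w$ also vanishes, so $B_w=B_{dw}=0$, the two-periodic complex $\Omega_{dw}$ reduces to $\bigoplus_i \Omega^i$ with zero differential, and the curved module $\At$ reduces to the (folded) Yoneda extension
\[ 0 \to \Omega^{\otimes n-1}\tens{\O_X}\Omega_\Delta \to \Omega^{\otimes n-1}\tens{\O_X}\J^1_\Delta \to \dots \to \J^1_\Delta \to \O_\Delta \]
with differential $-m\gamma$. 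This is precisely the Yoneda resolution that Markarian uses to represent powers of the Atiyah class, so $\At$ is the intrinsic analogue, in the matrix factorization setup, of Markarian's tool for building the exponential Atiyah class.

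Second, Lemma~\ref{lem:sheafTrace} applies verbatim, so the sheafified boundary-bulk map equals $\mathcal T_\E=\mathrm{str}(-\circ \mathcal Exp(at(\E)))$. In Definition~\ref{dfn:expAt}, with $w=0$, the map $\pi$ reduces to the standard map from the bar complex to the Atiyah resolution, and the anti-symmetrization $\sum_{i=0}^n \wedge/i!$ of Lemma~\ref{lem:wedgeq} sends the Yoneda concatenation of $i$ copies of $at(\E)$ to $\tfrac{1}{i!}at(\E)^{\wedge i}$. Therefore $\mathcal Exp(at(\E))$ in our sense agrees, in the derived category of sheaves on $X$, with the classical exponential
\[ \sum_{i=0}^n \frac{at(\E)^{\wedge i}}{i!}:\E\to \bigoplus_i \Omega^i\otimes\E[i].\]
Evaluating at the identity and applying $\mathbb R\Gamma$ through Lemma~\ref{lem:sheafifiedTrace} presents the DG Chern character of $\E$ as
\[ \mathbb R\Gamma\!\left(\mathrm{str}\!\left(\sum_{i=0}^n \frac{at(\E)^{\wedge i}}{i!}\right)\right),\]
which is exactly Markarian's and Caldararu's formula for the classical Chern character.

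This route also closes the gap flagged in the remark preceding the theorem: in \cite{Ramadoss} one must tacitly identify two a priori different models of $\mathrm{ch}(\O_\Delta)$, whereas here $\tau_\E$ is constructed directly from the evaluation $\E\boxtimes\E^\vee\to\Delta_*\O_X$ and its explicit representative is produced by a single roof through $\At(\E)$, so no auxiliary comparison of Chern classes of the diagonal is ever invoked. The main obstacle is really a book-keeping check: one must verify that the signs introduced by the grading operator $\gamma$ in the differential of $\At$, together with the combinatorial factors $1/i!$ in Lemma~\ref{lem:wedgeq}, exactly reproduce Markarian's normalization of the exponential Atiyah class. This is a local calculation in a trivialization of $\E$, essentially the same one carried out in \cite{Markarian}, and once it is done the theorem follows formally from Lemmas~\ref{lem:sheafifiedTrace}, \ref{lem:sheafTrace} and~\ref{lem:wedgeq}.
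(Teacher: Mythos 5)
Your proposal is correct and follows essentially the same route as the paper: specialize to $w=0$ (the paper's remark before the theorem), combine Lemmas \ref{lem:sheafifiedTrace} and \ref{lem:sheafTrace} to express the DG Chern character as $\mathbb R\Gamma(\mathrm{str}(\mathcal Exp(at(\E))))$, and identify this with Markarian's exponential Atiyah class, hence with the classical Chern character. Your extra discussion of how $\At$ reduces to the Yoneda resolution representing $\wedge at(\E)^i$ just makes explicit what the paper delegates to Section \ref{section:atiyah} and the citation of \cite{Markarian}.
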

\begin{proof} We apply our remark \ref{rmk:sheaves} to lemma \ref{lem:sheafTrace} and \ref{lem:sheafifiedTrace} to find that the DG Chern Character of a bounded complex of locally free sheaves $\E$ is given by 
\[\mathbb R\Gamma(str(\mathcal Exp(at(E))))\in  \mathbb R\Gamma(\bigoplus \Omega^{i})=HH(X).\] This according to \cite{Markarian} is exactly the classical Chern Character.
\end{proof}

\section{A Formula for the Boundary-bulk Map} \label{section:formula}

In this section we wish to develop a global analog of the Chern character formula for global matrix factorizations computed for a formal disk in \cite{PV:HRR}. There should be some question about what such an analog could be since globality generally prohibits formulas, at least formulas involving coordinates. Another option would be to relate to Chern character to certain classes which exist globally, e.g. Chern classes or the Atiyah class. At some level we have already done this and at another we have already discussed the obstruction to doing so. We should probably also point out here that we do not know what Chern classes are for matrix factorizations.

We have taken the task of finding a global Chern character formula and more generally the boundary bulk map to mean the following: understand the image of the boundary bulk map in some computable model for $\mathbb R\Gamma(\Omega_{dw})$. This will be a Cech model and we will give our formula in terms of local connections on a Cech cover.

\begin{lem} \label{lem:connection}Let $\E$ be a matrix factorization with curved differential $e$. Suppose $\nabla$ is a connection on $\E$, i.e. $\nabla$ consists of standard connections on underlying graded components $\nabla_i:\E_0\to\Omega\otimes \E_i$ for $i=0,1$. Then the morphism
\begin{center}
\begin{tikzpicture}
\matrix(m)[matrix of math nodes, row sep=2em, column sep=2em]{{}&\At(\E)&{}\\ \E&{}&\Omega_{dw}\otimes \E\\};
\path[->]
(m-1-2) edge node[auto,sloped]{$\sim$} (m-2-1)
(m-1-2) edge (m-2-3);
\end{tikzpicture} 
\end{center}
which represents the map $\E xp(at(\E))$ in the coderived category from definition \ref{dfn:expAt} is given by the map of $w$-curved complexes 
\[exp(at(\E))=\sum_{i=0}^n \frac{\wedge[\nabla,e]^i}{i!}.\]
\end{lem}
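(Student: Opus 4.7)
Proof proposal.

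The strategy is to produce an explicit chain-level lift $\tilde{s}\colon \E \to \At(\E)$ which is a quasi-inverse to the weak equivalence $\pi_\E\colon \At(\E)\to \E$ of Remark \ref{rmk:piWeakEquivalence}, and then compute $\left(\sum_{i=0}^n \wedge/i!\right)\circ \tilde{s}$ directly. The key input is that a global connection $\nabla$ on $\E$ gives a canonical splitting $s_0\colon \E \to \J^1(\E)$ of the first jet sequence
\[
0\to \Omega\tens{\O_X} \E \to \J^1(\E)\to \E \to 0,
\]
namely $s_0(x) = 1\otimes x - \nabla(x)$ (where $\nabla(x)\in \Omega\tens{\O_X}\E$ is included into $\J^1(\E)$ via $\Omega_\Delta \hookrightarrow \J^1_\Delta$). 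This splitting is a degree-$0$, $\O_X$-linear section of $\pi$ but does not commute with the curved differentials; the defect is measured by $[\nabla,e]$.

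The plan is to promote $s_0$ to an honest morphism of $w$-curved modules by defining higher corrections $s_i\colon \E \to \Omega^{\otimes i}\tens{\O_X} \J^1(\E)$ inductively, each correcting the failure of the previous level to commute with the differential. Concretely, I expect to show that the choice
\[
s_i(x) \;=\; \frac{1}{i!}\,\widetilde{[\nabla,e]^i(x)}
\]
works, where $\widetilde{(\,\cdot\,)}$ is the lift of an element of $\Omega^{\otimes i}\tens{\O_X}\E$ into $\Omega^{\otimes i}\tens{\O_X}\J^1(\E)$ determined by the connection-induced splitting in the last tensor slot. Setting $\tilde{s}:=\sum_{i=0}^n s_i$, the verification that $\tilde{s}$ is a closed degree-$0$ morphism of $w$-curved modules will reduce, level by level in the filtration of $\At(\E)$ by form-degree, to two identities: (i) the horizontal/$m$-piece of $(B_{dw}-m\gamma) s_i$ cancels with $s_{i-1}\circ e_\E - e_{\Omega^{\otimes (i-1)}\otimes \J^1(\E)}\circ s_{i-1}$, which is a direct calculation using $m\circ \tilde{(\,\cdot\,)}=\mathrm{id}$ and the Leibniz rule for $\nabla$; and (ii) the $B_{dw}$-piece at the top level is absorbed by the factorial, using that $[\nabla,e]^{n+1}=0$ for dimensional reasons after wedging (more precisely, any contribution lands in $\Omega^{n+1}=0$).

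Once $\tilde{s}$ is a chain map, the definition of $\mathcal{E}xp(at(\E))$ via the roof in Definition \ref{dfn:expAt} says that $\mathcal{E}xp(at(\E))$ is represented by the honest morphism $\left(\sum_{i=0}^n \wedge/i!\right)\circ \tilde{s}$. Computing this is immediate from the formulas: the map $\wedge$ vanishes on the $\nabla$-correction factor (since that factor lies in the image of $\Omega\tens{\O_X}\J^1\to \J^1$, which the antisymmetrization kills when composed with the last $\wedge$), so only the leading term in the lift $\widetilde{[\nabla,e]^i(x)}$ survives, giving exactly $\wedge [\nabla,e]^i(x)/i!$. Summing over $i$ yields the stated formula.

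The main obstacle will be step (i) above: checking that the curved differential on $\At(\E)$, which interleaves $b/m$, $B_{dw}$, and the differential induced from $e$ on $\E$, is intertwined by $\tilde{s}$. This is essentially the calculation that $[\nabla,e]$ behaves like a differential modulo $\O_X$-linear error terms that telescope along the $m$-direction, together with the fact that $e^2=w$ exactly matches the curvature $\tilde{w}|_\Delta = 0$ that the curved structure on $\At$ was designed to account for. The combinatorial coefficients $1/i!$ arise from the same antisymmetrization that turns the iterated $B_{dw}$'s into ordinary wedges with $dw$, mirroring the HKR calculation in the proof following \eqref{eqn:HHbicomp2}.
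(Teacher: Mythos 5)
Your overall strategy is the same as the paper's: use the connection to split the jet sequence, build an explicit closed degree-$0$ section $\tilde s\colon \E\to\At(\E)$ of the weak equivalence $\pi$, and then read off the composite with the second leg of the roof of Definition \ref{dfn:expAt}. However, the specific lift you propose is wrong, in a way that breaks both halves of the argument. First, the factorials are double-counted: the second leg of the roof is the \emph{fixed} map $\sum_{i=0}^n\frac{\wedge}{i!}$ of Lemma \ref{lem:wedgeq}, which already carries the coefficient $\frac{1}{i!}$ on the component $\Omega^{\otimes i}\otimes\J^1(\E)$. If your lift is $s_i=\frac{1}{i!}\widetilde{[\nabla,e]^i}$, then the morphism represented by the roof is $\bigl(\sum_i\frac{\wedge}{i!}\bigr)\circ\tilde s=\sum_i\frac{\wedge[\nabla,e]^i}{(i!)^2}$, not the claimed formula; your last paragraph silently replaces the roof map by the bare antisymmetrization $\wedge$. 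The correct lift has coefficient $1$: one places $[\nabla,e]^i$ in the $\E$-summand $\Omega^{\otimes i}\otimes\E\subset\Omega^{\otimes i}\otimes\J^1(\E)$ of the connection-induced splitting and takes $\tilde s=\sum_{i=0}^n[\nabla,e]^i$, with all factorials coming solely from the roof map.

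Second, with the coefficients $\frac{1}{i!}$ your $\tilde s$ is not even closed, so it cannot be used to evaluate the roof. In the splitting, the differential of $\At(\E)$ has two pieces that shuttle between the summands $\Omega^{\otimes i+1}\otimes\E$ and $\Omega^{\otimes i}\otimes\E$: the $m\gamma$ piece and the off-diagonal entry of the induced differential $\left(\begin{smallmatrix}1\otimes e&[\nabla,e]\\0&e\end{smallmatrix}\right)$ on $\J^1(\E)$; in addition $B''_{dw}$ acts within the $\E$-summands. Closedness of $\sum_i c_i[\nabla,e]^i$ forces the contribution produced from $c_{i+1}[\nabla,e]^{i+1}$ to cancel the one produced from $c_i[\nabla,e]^i$, both in the $m$/$[\nabla,e]$ bookkeeping and in the $B''_{dw}$ bookkeeping; this forces $|c_{i+1}|=|c_i|$, which fails for $c_i=\frac{1}{i!}$, and there is no telescoping rescue from $\Omega^{n+1}=0$ since these terms live in $\Omega^{\otimes(i+1)}\otimes\E$ for every $i<n$. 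Relatedly, your item (ii) misidentifies the mechanism for the $B_{dw}$ terms: they appear in every form degree, not just at the top, and they are absorbed not by the factorial but by the identity $e[\nabla,e]+[\nabla,e]e=\nabla w-w\nabla=dw$, equivalently $(-1)^q e[\nabla,e]^q-[\nabla,e]^qe=-B''_{dw}[\nabla,e]^{q-1}$; this identity is the heart of the proof and never appears explicitly in your outline.
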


\begin{proof} Given a connection $\nabla$ on $\E$ we obtain splittings $\J^1(\E_i)=\Omega^1\otimes \E_i\oplus \E_i$ under these splittings the induced map by $e$ on $\J^1(\E)$ becomes 
\[\left(\begin{matrix} 1\otimes e & [\nabla,e]\\
0&e\end{matrix}\right),\]
The map \[m:\Omega^{\otimes q+1}\otimes \E_i\oplus \Omega^{q}\otimes \E_i\to \Omega^{q}\otimes \E_i\oplus \Omega^{q-1}\E_i\]
is simply given by the projection
\[\left( \begin{matrix} 0&1\\0&0\end{matrix}\right),\]
and the map $B_{dw}$ splits as 
\[\left(\begin{matrix} B'_{dw}&0\\0&B''_{dw}\end{matrix}\right)\]
where 
$B'_{dw}:\Omega^{\otimes q+1}\otimes \E_i\mapsto \Omega^{\otimes(q+2)}\otimes \E_i$
is given by 
\[B'_{dw}(a_0da_1\otimes \dots \otimes da_{q+1}\otimes e)=\sum_{j=0}^q(-1)^ja_0da_1\otimes \dots\otimes da_j\otimes dw\otimes da_{j+1}\otimes \dots \otimes da_{q+1}\otimes e\]
and
$B''_{dw}:\Omega^{\otimes q}\otimes \E_i\mapsto \Omega^{\otimes(q+1)}\otimes \E_i$
is given by
\[B''_{dw}(a_0da_1\otimes \dots \otimes a_q\otimes e)= \sum_{j=0}^q(-1)^ja_0da_1\otimes \dots\otimes da_j\otimes dw\otimes da_{j+1}\otimes \dots  \otimes da_{q}\otimes e.\]

Using these splittings we may view $\sum_{i=0}^n [\nabla,e]^i$ as a degree 0 map $\E\to \At(\E)$ then it will suffice to show that that map is a closed morphism of $w$-curved modules since it obviously splits the weak-equivalence $\At(\E)\to \E$ induced by projecting and we also have 
\[\left(\sum_{i}\frac{\wedge}{i!}\right) \circ\left(\sum_i [\nabla,e]^i\right)=exp(at(\E)).\] 

We first need to make the simple calculation:
\[e[\nabla,e]+[\nabla,e]e=\nabla w-w\nabla=dw.\]

Now 
\begin{small}
\begin{eqnarray*}
e\gamma[\nabla,e]^q-[\nabla,e]^qe &=&(-1)^{q} e[\nabla,e]^q-[\nabla,e]^qe\\
&=&(-1)^{q}\sum_{i=0}^{q-1}(-1)^{i}[\nabla,e]^{i}(e[\nabla,e]+[\nabla,e]e)[\nabla, e]^{q-i-1}\\
&=&-\sum_{i=0}^{q-1}(-1)^{q-i-1}[\nabla,e]^{i}\otimes dw\otimes [\nabla,e]^{q-i-1}\\
&=&-B''_{dw}[\nabla,e]^{q-1}\\
\end{eqnarray*}
\end{small}
The equality going from line 3 to 4 above is a bit tricky: The map $dw\otimes-$ sends 
\[a_0da_1\otimes da_{q-i-1}\otimes e\mapsto a_0da_1\otimes da_{q-i-1}\otimes dw\otimes e\] and then for the composition we have
\begin{footnotesize}
\begin{center}
\begin{tikzpicture}
\matrix(m)[matrix of math nodes, row sep=2em, column sep =3em]{\E_j&\Omega^{\otimes q-i-1}\otimes \E_{j+q-i-1}&{}\\
{}& \Omega^{\otimes q-i-1}\otimes \Omega^1\otimes \E_{j+q-i-1}&\Omega^{\otimes q-i-1}\otimes \Omega^1\otimes \Omega^i\otimes \E_{j+q-1}\\};

\path[->, font=\footnotesize]
(m-1-1) edge node[auto]{$[d,\nabla]^{q-i-1}$} (m-1-2)
(m-1-2) edge node[auto]{$dw\otimes-$} (m-2-2)
(m-2-2) edge node[auto]{$[d,\nabla]^{i}$} (m-2-3);
\end{tikzpicture}
\end{center} 
\end{footnotesize}
so computing $[\nabla,e]^i\otimes dw\otimes [e,\nabla]^{q-i-1}$ is the same as computing $[\nabla,e]^{q-1}$ and then inserting $dw$ in the $q-i-1$st slot. 
Now we may compute
\begin{footnotesize}
\begin{align*}\left[
\begin{pmatrix}
B'_{dw} & 0\\
0       & B''_{dw}
\end{pmatrix}
\right.&\left.-\begin{pmatrix} 
0 & \gamma\\
0 & 0
\end{pmatrix}
+\begin{pmatrix}
d_{\E}\gamma & [\nabla,e]\gamma\\
0            & e\gamma 
\end{pmatrix}\right]\begin{pmatrix} 
0\\ \sum_{i=0}^n[\nabla,e]^i\end{pmatrix}\\
&=\sum_{i=0}^n\begin{pmatrix}
0 \\ B''_{dw}[\nabla,e]^i\end{pmatrix}
-\sum_{i=1}^n\begin{pmatrix}
(-1)^i[\nabla, e]^i\\0
\end{pmatrix}\\
&\qquad+\sum_{i=0}^n\begin{pmatrix} 0\\(-1)^ie[\nabla,e]^i\end{pmatrix}+\sum_{i=0}^n\begin{pmatrix} (-1)^i[\nabla,e]^{i+1}\\0\end{pmatrix}\\
&=\begin{pmatrix} 0\\
\sum_{i=0}^n[\nabla,e]^ie
\end{pmatrix}
\end{align*}
\end{footnotesize}
which finishes the proof. Note that the second sum on the second line starts at $i=1$ because the component of the differential on $\At(E)$ coming from $m$ is 0 on $\J^1(\E)$. 
\end{proof} 

\begin{rmk} In the case when $X$ is a formal disk, we recover the formula for the Chern Character from \cite{PV:HRR} by identifying the cohomology $\mathbb R\Gamma(\Omega_X)$ with the Tyurina algebra .
\end{rmk}

Naturality of the Chern character (or more generally the boundary bulk map) implies that it commutes with restriction to open subschemes. The above lemma tells us what happens to the Chern Character upon restriction to open affine subschemes. Of course, upon restricting we loose information. The following lemmas describe how we can go the other direction. That is, they give us a method to take  this local data (along with an appropriate collection of homotopies) to a global a global morphism to Cech Cohomology. 

\begin{lem} \label{lem:gluing} Let $(\F,d_\F)$ and $(\G, d_\G)$ be complexes of sheaves (or $w$-curved $S(\O_X)$ modules) on $X$ and $U_1,\dots, U_n$ be a Cech cover of $X$. Denote $\G_{i_0\dots i_p}=(j_{i_0\dots j_p})_\ast\G|_{U_{i_0}\cap\dots \cap U_{i_p}}$, where $j_{p_0\dots p_n}$ is the inclusion of $U_{i_0}\cap\dots \cap U_{i_p}$ into $X$. Suppose we are given the following data: for each $0\leq p\leq n$ and each tuple $i_0i_1\dots i_p$ with $1\leq i_0<i_1<\dots<i_p\leq n$ we have a map 
\[f_{i_0\dots i_p}: \F\to \G_{i_0\dots i_p}[p]\]
such that 
\[d_\G f_{i_0\dots i_p}-(-1)^pf_{i_0\dots i_p}d_\F=\sum_{j=0}^p(-1)^kf_{i_0\dots \widehat{i_k}\dots i_p}|_{U_{i_0\dots i_p}},\]
then the map $f: \F\to Cech(\G)$ defined on $\F^q$ by
\[f=\sum_{p}\sum_{i_0\dots i_p}(-1)^{\frac{p(p-1)}{2}}f_{i_0\dots i_p}\]
is a closed degree 0 map of complexes.
\end{lem}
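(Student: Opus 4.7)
The plan is to check separately that $f$ has degree $0$ and that it commutes with the differentials. Degree $0$ is immediate: in the total Cech complex $\mathrm{Cech}(\G)$, the Cech-degree $p$ summand sits at internal sheaf degrees shifted by $-p$, so the assumption that each $f_{i_0\dots i_p}$ is a degree $0$ map $\F \to \G_{i_0\dots i_p}[p]$ places the corresponding summand of $f$ in total degree $0$.

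For closedness, I would fix the usual conventions on $\mathrm{Cech}(\G)$: write the total differential in Cech-degree $p$ as $D = \delta + (-1)^p d_\G$, with $\delta$ the standard Cech coboundary. Decompose $f = \sum_p f^{(p)}$ by Cech-degree, so that $f^{(p)}_{i_0\dots i_p} = (-1)^{p(p-1)/2}\, f_{i_0\dots i_p}$. The identity $D f = f d_\F$ then splits into one equation per tuple $(i_0<\dots<i_p)$, receiving contributions from $(-1)^p d_\G f^{(p)}$ and from $\delta f^{(p-1)}$. Substituting the given identity
\[
d_\G f_{i_0\dots i_p} = (-1)^p f_{i_0\dots i_p} d_\F + \sum_{k=0}^p (-1)^k f_{i_0\dots\widehat{i_k}\dots i_p}\bigl|_{U_{i_0\dots i_p}}
\]
expresses the left-hand side in terms of $f_{i_0\dots i_p} d_\F$ and the restriction terms; the restriction terms must cancel against those coming from $\delta f^{(p-1)}$, and the remaining $f d_\F$ contribution must match the right-hand side.

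The only nontrivial aspect, and the main obstacle, is the sign bookkeeping. The factor $(-1)^{p(p-1)/2}$ is precisely the standard shift sign that intertwines the given relation with the double-complex differentials. Cancellation of the restriction terms follows from the fact that $(p-1)(p-2)/2 + p(p+1)/2 = p(p-1)+1$ is odd, so the two sign prefactors are opposite; and the coefficients of $f_{i_0\dots i_p} d_\F$ match because $p(p+1)/2 + p$ and $p(p-1)/2$ differ by the even number $2p$. The argument is identical in the curved-module setting, since only the Leibniz relation between $d_\G$ and $d_\F$, not $d_\G^2 = 0$, is invoked.
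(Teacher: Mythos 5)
Your proof is correct and follows essentially the same route as the paper's: a direct verification that $f$ intertwines $d_\F$ with the sign-twisted total differential $\delta + (-1)^p d_\G$, reducing to exactly the two parity checks you record (the paper organizes the same computation globally via the grading operator $\gamma$ and a bookkeeping sign $\sigma(\mathbf{i},\mathbf{j})$, showing $\gamma d_\G f - f d_\F = -cf$, rather than tuple by tuple). Your closing remark that only the Leibniz compatibility, not $d_\G^2=0$, is used is also consistent with the paper's inclusion of the curved case.
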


\begin{proof}
First observe that $f_{i_0 \dots i_p}$ takes $\F^q$ to $\G^{q-p}_{i_0 \dots i_p}$ and $\G^{q-p}_{i_0 \dots i_p}$ lives in degree $q$ of $Cech(\G)$, therefore the map $f$ is indeed degree 0.

If we consider the composition $cf$ where $c$ is the Cech differential, we get \begin{align*}
cf&=\sum_{p=0}^n\sum_{i_0\dots i_p}(-1)^{\frac{p(p-1)}{2}}cf_{i_0\dots i_p}\\
&=\sum_{p=0}^n\mathop{\sum_{\mathbf{i}=i_0\dots i_p}}_{\mathbf{j}=j_0\dots j_{p+1}}(-1)^{\frac{p(p-1)}{2}}\sigma(\mathbf{i},\mathbf{j})f_{i_0\dots i_p}|_{U_{j_0\dots j_{p+1}}}
\end{align*}

where
\[\sigma(\mathbf{i},\mathbf{j})=\begin{cases} 1&\mathrm{if}~i_0\dots i_p=j_0\dots \hat{j_k}\dots j_{p+1},~k~\mathrm{even}\\
-1&\mathrm{if}~i_0\dots i_p=j_0\dots \hat{j_k}\dots j_{p+1},~k~\mathrm{odd}\\
0&\mathrm{else}
\end{cases}\]

On the other hand,by our assumption on $f_{i_0\dots i_p}$, we have 
\begin{align*}
\gamma d_\G f-f d_\F&=
\sum_{p=0}^n\sum_{i_0\dots i_p}(-1)^{\frac{p(p-1)}{2}+p}(d_\G f_{i_0\dots i_p}-(-1)^pf_{i_0\dots i_p}d_\F)\\
&=\sum_{p=0}^n\sum_{i_0\dots i_p}(-1)^{\frac{p(p-1)}{2}+p}\sum_{k=0}^p(-1)^kf_{i_0\dots \widehat{i_k}\dots i_p}|_{U_{i_0\dots i_p}}\\
&=\sum_{p=0}^n\mathop{\sum_{\mathbf{i}=i_0\dots i_p}}_{\mathbf{j}=j_0\dots j_{p+1}}(-1)^{\frac{(p+1)(p)}{2}+p+1}\sigma(\mathbf{i},\mathbf{j})f_{i_0\dots i_p}|_{U_{j_0\dots j_{p+1}}}\\
&=-\sum_{p=0}^n\mathop{\sum_{\mathbf{i}=i_0\dots i_p}}_{\mathbf{j}=j_0\dots j_{p+1}}(-1)^{\frac{p(p-1)}{2}}\sigma(\mathbf{i},\mathbf{j})f_{i_0\dots i_p}|_{U_{j_0\dots j_{p+1}}}\\
&=-cf\\
\end{align*}
where $\gamma$ is the grading operator on the Cech complex: $\gamma|_{\G_{i_0\dots i_p}}=(-1)^{p}$. 
\end{proof}

\begin{lem} \label{lem:maps}
\begin{small}
Let $\E$ be a matrix factorization with curved differential $e$. Let  $\nabla_{j}$ be a choice of connection on $U_{j}$, where $\{U_j\}|_{j=1}^N$ is a Cech cover of $X$. The collection of maps
\[f_{i_0\dots i_p}:\E\to \At(\E)_{i_0\dots i_p}[p],\]
\[f_{i_0\dots i_p}=\sigma_{i_0}\sum_{k_0,k_1\dots, k_p} \tau_p(k_0,\dots,k_p)[e,\nabla_{i_0}]^{k_0}(\nabla_{i_0}-\nabla_{i_1})[e,\nabla_1]^{k_1}(\nabla_{i_1}-\nabla_{i_2})\dots 
[e,\nabla_{i_p}]^{k_p} \] $\tau_p(k_0,\dots k_p)=(-1)^{\sum_{j=0}^pj(k_j+1)}$ and $\sigma_{i_0}:\Omega^{\otimes q}\otimes \E\to \Omega^{\otimes q}\otimes \J^1(\E)$ is the splitting induced by $\nabla_{i_0}$
\end{small}
satisfies the hypothesis of lemma \ref{lem:gluing}.
\end{lem}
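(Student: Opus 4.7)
The plan is to verify the hypothesis of Lemma~\ref{lem:gluing} by direct computation, built on two algebraic identities. Write $D_j := [\nabla_j, e]$ for brevity. The first identity, $e D_j + D_j e = dw$, was established inside the proof of Lemma~\ref{lem:connection}. The second is the elementary telescoping
\[(\nabla_a - \nabla_b)\, e - e\,(\nabla_a - \nabla_b) = D_a - D_b,\]
which follows from expanding the difference. Together these govern how $e$ pushes through the product $D_{i_0}^{k_0}(\nabla_{i_0} - \nabla_{i_1})D_{i_1}^{k_1}\cdots D_{i_p}^{k_p}$ appearing inside $f_{i_0\dots i_p}$.

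The main computation is to evaluate, term by term in $(k_0, \dots, k_p)$, the quantity $d_{\At} f_{i_0\dots i_p} - (-1)^p f_{i_0\dots i_p}\, e$. I will decompose the differential on $\At(\E)_{i_0\dots i_p}$ into its three pieces: the internal differential from $e$ acting on $\E$, the collapsing piece $-m\gamma$, and the insertion piece $B_{dw}$. Commuting each $e$ rightward through the product by repeated use of $e D_j + D_j e = dw$ produces exactly one $dw$-insertion at each possible position; when summed over $(k_0,\dots,k_p)$, these match $B_{dw} f_{i_0\dots i_p}$ term by term and cancel. The residual terms are precisely those in which one of the factors $(\nabla_{i_j} - \nabla_{i_{j+1}})$ has been converted via the second identity into $D_{i_j} - D_{i_{j+1}}$. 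Collecting these residuals produces, for each $j$, blocks of the form $\cdots D_{i_j}^{k_j}(D_{i_j}-D_{i_{j+1}})D_{i_{j+1}}^{k_{j+1}}\cdots$; after reindexing the exponent tuples, each such block telescopes so that only the contributions with the index $i_j$ omitted survive, which are exactly the face terms $f_{i_0\dots\widehat{i_j}\dots i_p}$. The boundary cases $j=0$ and $j=p$ involve the splitting $\sigma_{i_0}$: the identity $m\circ\sigma_{i_0}=0$ on the $\Omega^{\otimes q}\otimes \E$-summand together with the projection of a difference-of-connections factor onto $\J^1(\E)$ handles the $-m\gamma$-contribution cleanly and supplies the missing end faces.

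The whole argument is driven by signs: the weight $\tau_p(k_0,\dots,k_p) = (-1)^{\sum_j j(k_j+1)}$ is engineered precisely so that its relative sign under a shift $k_j \mapsto k_j + 1$ is the one required to make the telescoping collapse yield the face $f_{i_0\dots\widehat{i_j}\dots i_p}$ with coefficient $(-1)^j$. The principal obstacle will be the concurrent bookkeeping of these $\tau_p$ weights against the internal signs of $B_{dw}$ and $m\gamma$, the grading shift that places $f_{i_0\dots i_p}$ as a degree-$p$ map, and the sign $(-1)^{p(p-1)/2}$ that Lemma~\ref{lem:gluing} then reintroduces. Once this combinatorial reorganization is carried out, the remaining verification is a symbolic manipulation: the sum of telescoping sums assembles into the claimed Cech coboundary, so that $f$ defined in the cited formula from Lemma~\ref{lem:gluing} is a closed degree-$0$ morphism from $\E$ to the Cech complex of $\At(\E)$.
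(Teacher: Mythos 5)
Your two governing identities are correct and your general telescoping picture is the right one, but there is a genuine gap in the handling of the $m$-component of the differential, and it is hidden precisely behind the claim you invoke as clean. You assert \emph{"$m\circ\sigma_{i_0}=0$"}, but this is false. In the $2\times 2$ block decomposition of $\At(\E)$ induced by $\sigma_{i_0}$, the paper's own Lemma~\ref{lem:connection} computes $m$ as the matrix $\left(\begin{smallmatrix}0&1\\0&0\end{smallmatrix}\right)$, so $m\circ\sigma_{i_0}$ is, up to sign, the inclusion of $\Omega^{\otimes q}\otimes\E$ into the $\Omega^{\otimes q}\otimes\E$-subsheaf of $\Omega^{\otimes q-1}\otimes\J^1(\E)$ (it vanishes only on $\mathcal A_0$, i.e.\ $q=0$). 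That contribution is not negligible; it enters in an essential way.

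The deeper issue your sketch does not address is that the $j=0$ face $f_{i_1\dots i_p}$ is defined with $\sigma_{i_1}$, while $f_{i_0\dots i_p}$ and all other faces are defined with $\sigma_{i_0}$. The differential applied to $\sigma_{i_0}\tilde f_{i_0\dots i_p}$ (with $\tilde f$ denoting the sum before composing with $\sigma$) therefore needs a mechanism to produce a $\sigma_{i_1}$-split term. In the paper this is exactly what the nonzero $-m\gamma$ contribution supplies: the upper-right block of the split differential is $\gamma[\nabla_{i_0},e]-\gamma$, where the $-\gamma$ comes from $-m\gamma$; applied to $\tilde f_{i_0\dots i_p}$ it yields $(\nabla_{i_1}-\nabla_{i_0})\tilde f_{i_1\dots i_p}$, and the splitting identity $\sigma_{i_0}+( \nabla_{i_1}-\nabla_{i_0})=\sigma_{i_1}$ then converts the wrongly-split face into the correct one. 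If $m\circ\sigma_{i_0}$ were zero, the $[\nabla_{i_0},e]\tilde f_{i_0\dots i_p}$ term would leave an uncancelled copy of $\tilde f_{i_0\dots i_p}$ in the $\Omega^1$-summand, and no $\sigma_{i_1}$-split face would appear. So this step is not a clean formal vanishing but the crux of the argument, and your outline as stated would not close.

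Apart from that, your proposed direct term-by-term expansion versus the paper's induction on $p$ (using the recursion $f_{i_0\dots i_p}=\sum_k(-1)^{kp+p}f_{i_0\dots i_{p-1}}(\nabla_{i_{p-1}}-\nabla_{i_p})[\nabla_{i_p},e]^k$) is only a bookkeeping choice; the identities $eD_j+D_je=dw$ and $(\nabla_a-\nabla_b)e-e(\nabla_a-\nabla_b)=D_a-D_b$ drive both. But the splitting-change analysis has to be carried out explicitly before either organization of the sign computation can be completed.
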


\begin{proof} 

We know from the proof of lemma \ref{lem:connection} that \[(-1)^k e [\nabla_{i_0},e]-[\nabla_{i_0},e]^ke=-B_{dw} [\nabla_{i_0},e]^{k-1}\] so that \[\gamma e f_{i_0}-f_{i_0}e=-B_{dw} f_{i_0} .\] We claim that \[e f_{i_0\dots i_p}-(-1)^pf_{i_0\dots i_p}e=-B_{dw}+\sum_{j=0}^p(-1)^jf_{i_0\dots \widehat{i_j}\dots i_p}.\] We will prove this by induction, but before we do, let us see how this proves the lemma.

Recall from the proof of lemma \ref{lem:connection} that after splitting $\At(\E)$ with respect to $\nabla_{i_0}$ the differential is given as the sum of three components
\[\gamma e=(-1)^q\left(\begin{matrix} 1\otimes e&[\nabla_{i_0},e]\\ 0& e\end{matrix}\right): \Omega^{\otimes q+1}\otimes \E_{i}\oplus \Omega^{q}\otimes \E_{i}\to \Omega^{q+1}\otimes \E_{i+1}\oplus \Omega^q\otimes \E_{i+1}\]
\[-\gamma m=(-1)^q\left(\begin{matrix} 0&-1\\ 0&0\end{matrix}\right):\Omega^{q+1}\otimes \E_i\oplus \Omega^{q}\otimes \E_i\to \Omega^{q}\otimes \E_i\oplus \Omega^{q-1}\otimes \E_i\]
and
\[B_{dw}=\left(\begin{matrix} B'_{dw}&0\\ 0&B''_{dw} \end{matrix}\right): \Omega^{q+1}\otimes \E_i \oplus \Omega^{q}\otimes \E_i\to \Omega^{q+2}\otimes \E_i \oplus \Omega^{q+1}\otimes \E_i.\]

We have the relation
\begin{footnotesize}
\begin{align*}
[\nabla_{i_0},&e]f_{i_0\dots i_p}-f_{i_0\dots i_p}\\
&=-\sum_{{k_1\dots k_p}}(-1)^{\sum_{j=1}j(k_j+1)}(\nabla_{i_0}-\nabla_{i_1})[\nabla_{i_1},e]^{k_1}\dots [\nabla_{i_p},e]^{k_p}\\
&=(\nabla_{i_1}-\nabla_{i_0})\sum_{{k_1\dots k_p}}(-1)^{k_1+\dots +k_p+p-1}(-1)^{\sum_{j=0}j(k_j+1)}(\nabla_{i_0}-\nabla_{i_1})[\nabla_{i_1},e]^{k_1}\dots [\nabla_{i_p},e]^{k_p}
\end{align*}
\end{footnotesize}
So when we take into account the grading operator $\gamma$ we get
\[(\gamma [\nabla_{i_0},e]-\gamma )f_{i_0\dots i_p}=(\nabla_{i_1}-\nabla_{i_0})f_{i_1\dots i_p}.\]
Now the observation is that $(\nabla_{i_1}-\nabla_{i_0})f_{i_1\dots i_p}$ is exactly the difference between splitting with respect to $\nabla_{i_1}$ and splitting with respect to $\nabla_{i_0}$ i.e. 
\[\sigma_{i_0}f_{i_1\dots i_p}+(\nabla_{i_1}-\nabla_{i_0})f_{i_1\dots i_p}=\sigma_{i_1}f_{i_1\dots i_p}\]
This combined with the claim gives us the lemma.

To prove the claim, notice first that we can write
\[f_{i_0\dots i_{p}}=\sum_{k=0}^n (-1)^{kp+p}f_{i_0\dots i_{p-1}}(\nabla_{i_{p-1}}-\nabla_{i_p})[\nabla_{i_p},d]^k.\]

Then we make the computation:
\begin{footnotesize}
\begin{align}\nonumber
\gamma e f_{i_0\dots i_p}-(-1)^pf_{i_0\dots i_p}e&=\sum_{k} (-1)^{kp+p+k+1}(\gamma e f_{i_0\dots i_{p-1}}-(-1)^{p-1}f_{i_0\dots i_{p-1}})(\nabla_{i_{p-1}}-\nabla_{i_p})[\nabla_{i_p},e]^k\\\nonumber
&\quad +\sum_{k} (-1)^{kp+k} f_{i_0\dots i_{p-1}}([\nabla_{i_p},e]-[\nabla_{i_{p-1}},e])[\nabla_{i_p},e]^k\\ \nonumber
&\quad +\sum_{k} (-1)^{kp} f_{i_0\dots i_{p-1}}(\nabla_{i_{p-1}}-\nabla_{i_p})((-1)^ke[\nabla_{i_p},e]^k-[\nabla_{i_p},e]^ke]\\ 
& \label{line7}=\sum_{k} (-1)^{kp+k+p+1}(-B_{dw}f_{i_0\dots i_{p-1}})(\nabla_{i_{p-1}}-\nabla_{i_p})[\nabla_{i_p},e]^k\\ 
&\label{line8}\quad +\sum_{k} (-1)^{(k+1)(p-1)}(\sum_{j=0}^{p-1}(-1)^{j}f_{i_0\dots \widehat{i_j}\dots i_{p-1}})(\nabla_{i_{p-1}}-\nabla_{i_p})[\nabla_{i_p},e]^k\\
&\label{line9}\quad +\sum_{k} (-1)^{kp+k} f_{i_0\dots i_{p-1}}([\nabla_{i_p},e]-[\nabla_{i_{p-1}},e])[\nabla_{i_p},e]^k\\
&\label{line10}\quad +\sum_{k} (-1)^{kp} f_{i_0\dots i_{p-1}}(\nabla_{i_{p-1}}-\nabla_{i_p})(-B_{dw}[\nabla_{i_p},e]^{k-1})
\end{align}
\end{footnotesize}

Note that the sign $(-1)^{kp+k+p+1}$ on the first line appears because applying 
$(\nabla_{i_{p-1}}-\nabla_{i_p})[\nabla_{i_p},e]^k$ before $f_{i_0\dots i_{p-1}}$ introduces an extra $k+1$ tensor factors of $\Omega^1$. So what we will need to show is that lines \eqref{line7}, \eqref{line8}, \eqref{line9} and \eqref{line10} sum to give
\[-B_{dw}f_{i_0\dots i_p}+\sum_{j=0}^p(-1)^jf_{i_0\dots \widehat{i_j}\dots i_p}.\]

Now we note that
\begin{small}
\begin{align*}
(-1)^{k+1}(-B_{dw}&f_{i_0\dots i_{p-1}})(\nabla_{i_{p-1}}-\nabla_{i_p})[\nabla_{i_p},e]^k\\
&+f_{i_0\dots i_{p-1}}(\nabla_{i_{p-1}}-\nabla_{i_p})(-B_{dw}[\nabla_{i_p},e]^{k-1})\\
&\quad\quad \quad\quad=B_{dw} f_{i_0\dots i_{p-1}}(\nabla_{i_{p-1}}-\nabla_{i_p})[\nabla_{i_p},e]^k
\end{align*}
\end{small}
so the sums 
\[\sum_{k} (-1)^{kp} f_{i_0\dots i_{p-1}}(\nabla_{i_{p-1}}-\nabla_{i_p})(-B_{dw}[\nabla_{i_p},e]^{k-1})\]
from \eqref{line10}
and
\[\sum_{k=0}^\infty (-1)^{kp+k+p+1}(-B_{dw}f_{i_0\dots i_{p-1}})(\nabla_{i_{p-1}}-\nabla_{i_p})[\nabla_{i_p},e]^k\]
from \eqref{line7} add to give $-B_{dw}f_{i_0\dots i_p}$ as needed. 

We have the following relation
\[f_{i_0\dots i_{p-1}}[\nabla_{i_{p-1}},e]=(-1)^{p-1}f_{i_0\dots i_{p-1}}-f_{i_0\dots i_{p-2}}(\nabla_{i_{p-2}}-\nabla_{i_{p-1}})\]
so then for the sum from \eqref{line9} we have
\begin{footnotesize}
\begin{align}\nonumber
\sum_{k} (-1)^{kp+k} f_{i_0\dots i_{p-1}}([\nabla_{i_p},e]&-[\nabla_{i_{p-1}},e])[\nabla_{i_p},e]^k\\
&= \nonumber\sum_{k} (-1)^{kp+k} f_{i_0\dots i_{p-1}}[\nabla_{i_p},e]^{k+1}\\
&\nonumber\quad -\sum_{k}(-1)^{kp+k} f_{i_0\dots i_{p-1}}[\nabla_{i_{p-1}},e][\nabla_{i_p},e]^{k}\\
&\nonumber=\sum_{k} (-1)^{kp+k} f_{i_0\dots i_{p-1}}[\nabla_{i_p},e]^{k+1}\\
&\nonumber\quad +\sum_{k}(-1)^{kp+k+p} f_{i_0\dots i_{p-1}}[\nabla_{i_p},e]^{k}\\
&\nonumber\quad +\sum_{k}(-1)^{kp+k+1} f_{i_0\dots i_{p-2}}(\nabla_{i_{p-2}}-\nabla_{i_{p-1}})[\nabla_{i_p},e]^{k}\\
&\label{line11}=(-1)^pf_{i_0\dots i_{p-1}}\\
&\label{line12}\quad +\sum_{k}(-1)^{kp+k+1} f_{i_0\dots i_{p-2}}(\nabla_{i_{p-2}}-\nabla_{i_{p-1}})[\nabla_{i_p},e]^{k}
\end{align}
\end{footnotesize}

Now we turn our attention to the sum 
\[\sum_{k=0}^n(-1)^{(k+1)(p-1)}(\sum_{j=0}^{p-1}(-1)^{j}f_{i_0\dots \widehat{i_j}\dots i_{p-1}})(\nabla_{i_{p-1}}-\nabla_{i_p})[\nabla_{i_p},e]^k\]
from \eqref{line8}
In the case when $j\neq p-1$ we have
\begin{equation} \label{eqn:jneqp-1}\sum_{k=0}^\infty (-1)^{(k+1)(p-1)}f_{i_0\dots \widehat{i_j}\dots_{i_{p-1}}}(\nabla_{i_{p-1}}-\nabla_{i_p})[\nabla_{i_p},e]^{k}=f_{i_0\dots \widehat{i_j}\dots i_p}
\end{equation}

When $j=p-1$ we may add 
\[\sum_{k=0}^n (-1)^{(k+1)(p-1)+p-1} f_{i_0\dots i_{p-2}}(\nabla_{i_{p-1}-\nabla_{i_p}})[\nabla_{i_p},e]^k\]
to 
\[\sum_{k=0}^n (-1)^{kp+k+1}f_{i_0\dots i_{p-2}}(\nabla_{i_{p-2}}-\nabla_{i_{p-1}})[\nabla_{i_p},e]^k\]
from \eqref{line12} to get
\begin{equation} \label{eqn:jeqp-1}(-1)^{p-1}\sum_{k=0}^\infty (-1)^{k(p-1)} f_{i_0\dots i_{p-2}}(\nabla_{i_{p-2}}-\nabla_{i_p})[\nabla_{i_p},e]^k=(-1)^{p-1}f_{i_0\dots \widehat{i_{p-1}} i_p}.
\end{equation}
Adding the sums from \eqref{line11}, \eqref{eqn:jneqp-1} and \eqref{eqn:jeqp-1} gives
\[\sum_{j=0}^p(-1)^jf_{i_0\dots \widehat{i_j}\dots i_p}\]
which finishes the claim and thus the lemma. \\

\end{proof}

\begin{thm} \label{thm:formula} Let $f=\{f_{i_0\dots i_p}\}\in Cech(\sHom(\E,\E))$. The boundary bulk map $\tau_\E$ is computed on $f$ as
\begin{footnotesize}
\[f_{i'_0\dots i'_q}\mapsto str\left(\sum_{p}\sum_{i_0\dots i_p}\sum_{k_1\dots k_p} (-1)^{p+\sum_{j=0}^p jk_j} \frac{[\nabla_{i_0},e]^{k_0}(\nabla_{i_0}-\nabla_{i_1})\dots (\nabla_{i_{p-1}}-\nabla_{i_p})[\nabla_{i_p},e]^{k_p}}{(k_0+\dots+k_p+p)!}\circ f_{i'_0\dots i'_q}\right)\]
\end{footnotesize}
\end{thm}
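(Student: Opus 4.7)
The plan is to assemble the explicit Cech representative for $\mathcal{E}xp(at(\E))$ from the previously established lemmas, and then apply lemma \ref{lem:sheafTrace}. The key input is that lemma \ref{lem:maps} produces local maps $f_{i_0\dots i_p}\colon \E\to \At(\E)_{i_0\dots i_p}[p]$ satisfying exactly the cocycle-type hypothesis of lemma \ref{lem:gluing}. So the first step is to invoke lemma \ref{lem:gluing} to package these local maps into a single closed degree-zero morphism
\[
F\colon \E\longrightarrow Cech(\At(\E)),\qquad F=\sum_{p,i_0<\dots<i_p}(-1)^{p(p-1)/2}f_{i_0\dots i_p},
\]
of $w$-curved modules.

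Next, I would compose $F$ with the closed degree-zero morphism $\sum_{i=0}^n \frac{\wedge}{i!}\colon \At(\E)\to \Omega_{dw}\tens{S(\O_X)}\E$ from lemma \ref{lem:wedgeq} (applied componentwise on the Cech complex). Because the projection $\At(\E)\to \E$ splits $F$ locally on each $U_{i_0}$ by definition of $\sigma_{i_0}$, the composite $\bigl(\sum \wedge/i!\bigr)\circ F$ represents the map $\mathcal{E}xp(at(\E))\colon \E\to \Omega_{dw}\tens{S(\O_X)}\E$ of definition \ref{dfn:expAt} in the coderived category, but now with values in the Cech complex. By lemma \ref{lem:sheafTrace}, the sheafified boundary-bulk map is then given by $str(-\circ \mathcal{E}xp(at(\E)))$, and passing to $\mathbb R\Gamma$ via the Cech resolution (lemma \ref{lem:sheafifiedTrace}) gives $\tau_\E$.

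It then remains a purely combinatorial check to read off the formula in the statement. The main bookkeeping points are: each $[\nabla_{i_j},e]^{k_j}$ contributes $k_j$ form factors and each difference $(\nabla_{i_{j-1}}-\nabla_{i_j})$ contributes one, so the total form-degree after the operator in the statement is $k_0+k_1+\dots+k_p+p$; applying $\wedge/i!$ with this $i$ produces the denominator $(k_0+\dots+k_p+p)!$. For the sign, one combines the sign $\tau_p(k_0,\dots,k_p)=(-1)^{\sum_j j(k_j+1)}=(-1)^{\sum jk_j+p(p+1)/2}$ built into $f_{i_0\dots i_p}$ with the gluing sign $(-1)^{p(p-1)/2}$ from lemma \ref{lem:gluing}; their product is $(-1)^{\sum jk_j + p^2} = (-1)^{p+\sum jk_j}$, matching the formula. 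Finally, composition with $f_{i'_0\dots i'_q}$ on the nose (restricted to the appropriate intersection) and application of $str$ yield the claimed expression.

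The bulk of the work has already been done in lemmas \ref{lem:connection}, \ref{lem:wedgeq}, \ref{lem:gluing}, and \ref{lem:maps}; the main obstacle in this final step is merely the sign/factorial bookkeeping, which I expect to be routine once one is careful about the convention that the wedge map divides by the total number of form factors, not by the individual $k_j$'s. I would therefore present the proof essentially as the assembly of the Cech representative followed by the above short combinatorial identification.
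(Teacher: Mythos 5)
Your proposal follows essentially the same route as the paper: assemble the local maps from Lemma~\ref{lem:maps} via Lemma~\ref{lem:gluing} into a Cech lift of $\E\to\At(\E)$, post-compose with $\sum\wedge/i!$, invoke Lemma~\ref{lem:sheafTrace}, and do the (correctly computed) sign/factorial bookkeeping. The only point the paper makes more explicit than you do is the verification that the assembled map actually represents $\mathcal Exp(at(\E))$ in the coderived category: it phrases your observation that ``$\pi$ splits $F$ locally'' as the commutativity of the outer square and bottom-right triangle in a diagram with $\E$, $\At(\E)$, $Cech(\E)$, $Cech(\At(\E))$, and then uses that $Cech(\pi)$ is a weak equivalence to conclude the upper-left triangle commutes in $D^{co}$ — but this is merely a tidier formulation of the same idea.
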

\begin{proof}
This is mostly an amalgamation of lemmas \ref{lem:gluing}, \ref{lem:maps} and \ref{lem:sheafTrace}. The division by $(k_0+\dots k_p+p)!$ comes from applying the map 
\[\sum \frac{\wedge}{i!}:Cech(\At(\E))\to Cech(\Omega_{dw}\otimes \E)\] (see lemma \ref{lem:wedgeq}). The sign comes from the fact that
\[\frac{p(p-1)}{2}+\sum_{j=0}^pj(k_j+1)=\frac{p(p-1)}{2}+\frac{p(p+1)}{2}+\sum_{j=0}^pjk_j=p^2+\sum_{j=0}^pjk_j\]
and $p$ is congruent to $p^2$ modulo 2. We need only check that this map we have constructed actually computes the boundary bulk-map. 

We have the following diagram
\begin{center}
\begin{tikzpicture}
\matrix(m)[matrix of math nodes, row sep=3em, column sep=3em]{\At(\E)&Cech(\At(\E))\\ \E&Cech(\E)\\};

\path[->]

(m-1-1) edge (m-1-2)
(m-1-1) edge node[auto,swap]{$\pi$} (m-2-1)
(m-2-1) edge (m-1-2)
(m-2-1) edge (m-2-2)
(m-1-2) edge node[auto]{$Cech(\pi)$}(m-2-2);

\end{tikzpicture}
\end{center}
where the diagonal map is given by
\begin{equation} \label{eqn:diagonalMap} \sum_{p}\sum_{i_0\dots i_p}\sum_{k_1\dots k_p} (-1)^{p^2+\sum_{j=0}^p jk_j} [\nabla_{i_0},e]^{k_0}(\nabla_{i_0}-\nabla_{i_1})\dots (\nabla_{i_{p-1}}-\nabla_{i_p})[\nabla_{i_p},e]^{k_p}.
\end{equation}

Now the outside square commutes as well as the bottom right triangle. And then, since $Cech(\pi)$ is a weak equivalence, the upper left triangle commutes in the coderived category. It follows then that composition of the diagonal map, \eqref{eqn:diagonalMap}, with the map $\sum_{i} \frac{\wedge}{i!}:Cech(\At(\E))\to Cech(\Omega_{dw})$ computes $\iota\E xp(\At(\E))$, where $\iota: \Omega_{dw}\to Cech(\Omega_{dw})$ is the inclusion. 
\end{proof}

\begin{cor} \label{cor:formula} The Chern Character $\E$ is given by the cocycle 
\begin{small}
\[ch(\E)=str\left(\sum_{p}\sum_{i_0\dots i_p}\sum_{k_1\dots k_p} (-1)^{p+\sum_{j=0}^p jk_j} \frac{[\nabla_{i_0},e]^{k_0}(\nabla_{i_0}-\nabla_{i_1})\dots (\nabla_{i_{p-1}}-\nabla_{i_p})[\nabla_{i_p},e]^{k_p}}{(k_0+\dots+k_p+p)!}\right)\]
\end{small}
in a Cech model for $\mathbb R\Gamma(\Omega_{dw})$. 
\end{cor}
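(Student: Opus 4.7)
The plan is to deduce the corollary directly from Theorem \ref{thm:formula} by specializing the boundary-bulk map to the identity endomorphism. Recall from the introduction (following \cite{PV:HRR}) that the Chern character is $ch(\E) = \tau_\E(\mathrm{id}_\E)$, so once we have an explicit formula for $\tau_\E$ on a Cech cocycle, we need only evaluate it on a Cech representative of the identity.

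First I would write down the Cech representative of $\mathrm{id}_\E$. Since $\mathrm{id}_\E$ is a globally defined closed degree zero element of $\sHom(\E,\E)$, it determines a Cech 0-cocycle $f=\{f_{i'_0\dots i'_q}\}$ with $f_{i'_0} = \mathrm{id}_\E|_{U_{i'_0}}$ and $f_{i'_0\dots i'_q} = 0$ for $q\geq 1$. In particular, when we plug into the formula of Theorem \ref{thm:formula}, only the Cech bidegree $(p,0)$ terms survive, and the composition $(\,\cdot\,) \circ f_{i'_0}$ is just restriction to $U_{i'_0}$. Relabeling $i'_0$ as the initial index of the surviving Cech tuple $i_0\dots i_p$, the formula collapses to exactly the expression claimed in the statement.

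The only thing to verify carefully is that no extra sign or combinatorial factor appears from this specialization. The sign $(-1)^{p+\sum_{j=0}^p jk_j}$ in the corollary matches term-by-term the sign in Theorem \ref{thm:formula}, and the denominator $(k_0+\dots+k_p+p)!$ is unchanged since it tracks the total number of $\Omega^1$-factors produced by the diagonal map \eqref{eqn:diagonalMap} before applying the wedge map $\sum_i \tfrac{\wedge}{i!}$ of Lemma \ref{lem:wedgeq}. The super-trace is simply inherited from Lemma \ref{lem:sheafTrace}.

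No new analytic input is required; the entire content is algebraic bookkeeping. There is no hard step to speak of: the genuine work was already carried out in Theorem \ref{thm:formula} and the preceding Lemmas \ref{lem:connection}, \ref{lem:gluing} and \ref{lem:maps}, where the local formula $exp([\nabla,e])$ was glued via the Cech gluing data $\sigma_{i_0}(\nabla_{i_0}-\nabla_{i_1})\cdots(\nabla_{i_{p-1}}-\nabla_{i_p})$ into a global morphism representing $\mathcal{E}xp(at(\E))$. The corollary is then a one-line consequence: evaluate the trace on $\mathrm{id}_\E$ and collect terms.
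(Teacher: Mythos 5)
Your argument is exactly the paper's (implicit) proof: the corollary is obtained by evaluating the boundary--bulk formula of Theorem \ref{thm:formula} at the identity, represented by the Cech $0$-cocycle $\{\mathrm{id}_\E|_{U_i}\}$, so that composition reduces to restriction and the formula collapses as you describe. No further content is needed.
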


\begin{rmk} In light of remark \ref{rmk:sheaves} and theorem \ref{thm:coincide}, corollary \ref{cor:formula} translates directly to give a formula for the Chern character of complexes of vector bundles.
\end{rmk}

\bibliography{mf.bib}{}
\bibliographystyle{plain}
\end{document}